\DeclareMathOperator\mtsupp{mt-supp}
\DeclareMathOperator\bdsupp{bd-supp}
\DeclareMathOperator\w{w}
\DeclareMathOperator\bdp{bd}
\DeclareMathOperator\mt{mt}
\DeclareMathOperator\age{age}
\DeclareMathOperator\ran{rng}
\DeclareMathOperator\rank{rank}
\DeclareMathOperator\supp{supp}
\DeclareMathOperator\rng{rng}
\DeclareMathOperator{\rngf}{rng_{\text{\tiny FDD}}}
\DeclareMathOperator{\Net}{Net}
\DeclareMathOperator{\dist}{dist}
\numberwithin{equation}{section}
\newtheorem{theorem}{Theorem}[section]
\newtheorem{lemma}[theorem]{Lemma}
\newtheorem{proposition}[theorem]{Proposition}
\newtheorem{corollary}[theorem]{Corollary}
\newtheorem{definition}[theorem]{Definition}
\newtheorem{notation}{Notation}
\newtheorem{remark}[theorem]{Remark}
\DeclarePairedDelimiter\abs{\lvert}{\rvert}
\newcommand{\e}{\delta}
\newcommand{\ee}{\epsilon}
\newcommand{\norm}[1][\cdot]{\lVert #1 \rVert}
\newcommand{\wth}[1][\eta]{\widetilde{ #1 }}
\newcommand{\N}{\mathbb{N}}
\newcommand{\R}{\mathbb{R}}
\newcommand{\F}{\mathcal{F}}
\newcommand{\E}{\mathcal{E}}
\newcommand{\T}{\mathcal T}
\newcommand{\esga}[1][\gamma]{e^{*}_{#1}}
\newcommand{\egs}[1][\gamma]{e^{*}_{#1}}
\newcommand{\dg}[1][\gamma]{d_{#1}}
\newcommand{\BmT}{\mathfrak{B}_{\mathrm{mT}}}
\newcommand{\Ga}{\Gamma}
\newcommand{\brd}{\bar{d}}
\newcommand{\brc}{\bar{c}}
\newcommand{\bP}{\bar{P}}
\newcommand{\bDelta}{\bar{\Delta}}
\newcommand{\bGamma}{\bar{\Gamma}}
\newcommand{\bGammaq}[1][q+1]{\bar{\Gamma}_{#1}}
\newcommand{\bDeltaq}[1][q+1]{\bar{\Delta}_{#1}}
\newcommand{\ageg}[1][\gamma]{\mathrm{age}(#1)}
\newcommand{\bdgs}[1][\gamma]{\bar{d}^{*}_{#1}}
\title{An unconditionally saturated Banach space with the scalar-plus-compact 
property}
 \author{Antonis Manoussakis} 
\address{School of Environmental Engineering, Technical University of Crete, 
 University Campus, 73100 Chania, Greece}
 \email{amanousakis@isc.tuc.gr}
 \author{Anna Pelczar-Barwacz} 
 \address{Institute of Mathematics, Faculty of Mathematics and Computer 
Science, 
Jagiellonian University, {\L}ojasiewicza 6, 30-348 Krak\'ow, Poland}
 \email{anna.pelczar@im.uj.edu.pl}
 \author{Micha{\l} \'Swi\c{e}tek}
 \address{Institute of Mathematics, Faculty of Mathematics and Computer 
Science, 
Jagiellonian University, {\L}ojasiewicza 6, 30-348 Krak\'ow, Poland}
 \email{michal.swietek@uj.edu.pl}
\keywords{Bourgain-Delbaen space, $\mathscr{L}_\infty$-space, scalar-plus-compact property, unconditionally saturated Banach space}
\thanks{2010 \textit{Mathematics Subject Classification}. 46B03; 46B20; 46B45.}
\thanks{The fisrt author was partially supported by 
Bilateral Educational Programme, Greece-Poland 2015}
\begin{document}
\maketitle
\begin{abstract}
 We construct a Bourgain-Delbaen $\mathscr{L}_\infty$-space $\mathfrak{X}_{Kus}$ with structure that is  strongly heterogeneous: any bounded operator on $\mathfrak{X}_{Kus}$ is a compact perturbation of a multiple of the identity, whereas the space $\mathfrak{X}_{Kus}$ is saturated with unconditional basic sequences. 
\end{abstract}

\section{Introduction}
J.~Bourgain and F.~Delbaen presented in \cite{BD} a brilliant method of constructing $\mathscr{L}_{\infty}$-spaces with peculiar structure. Their method relies on a careful choice of an increasing sequence of finite dimensional subspaces $(F_n)_n$ of $\ell_\infty(\Gamma)$, with countably infinite $\Gamma$ and each $F_n$ uniformly isomorphic to $\ell_{\infty}^{dim F_{n}}$. A suitable choice of $(F_n)_n$ guarantees that the space $\overline{\cup_nF_n}$ is an $\mathscr{L}_\infty$-space with no unconditional basis. The Bourgain-Delbaen example contains no isomorphic copy of $c_0$, answering an old problem in the theory of $\mathscr{L}_\infty$-spaces. Later R.~Haydon \cite{H} proved that this space is saturated with reflexive $\ell_p$ spaces and introduced the notation used nowadays. The Bourgain-Delbaen method was used to construct Banach spaces that solved several other long-standing conjectures on the structure of Banach spaces and showed that one may not hope for an ordinary classification of $\mathscr{L}_{\infty}$-spaces as it happens in the $C(K)$-spaces case, see \cite{AFHO}, \cite{AGM}, \cite{AH}, \cite{FOS}. We refer to \cite{B} and \cite{BD} for the properties of the classical Bourgain-Delbaen spaces.

In \cite{AGM} a general Bourgain-Delbaen-$\mathscr{L}_\infty$-space is defined and the authors show a remarkable fact that any separable $\mathscr{L}_{\infty}$-space is isomorphic to such a space. We recall from \cite{AGM} that a BD-$\mathscr{L}_\infty$-space is a space $\mathfrak{X}\subset\ell_\infty(\Gamma)$, with $\Gamma$ countable, associated to a sequence $(\Gamma_q,i_q)_{q\in\N}$, where $(\Gamma_q)_q$ is an increasing sequence of finite sets with $\Gamma=\cup_{q\in\N}\Gamma_q$ and $(i_q)_q$ are uniformly bounded compatible extension operators $i_q:\ell_\infty(\Gamma_q)\to\ell_\infty(\Gamma)$, i.e. $i_q(x)|_{\Gamma_q}=x$ and $i_q(x)=i_p(i_q(x)|_{\Gamma_p}))$ for any $q<p$ and $x\in\ell_\infty(\Gamma_q)$. The space $\mathfrak{X}=\mathfrak{X}_{(\Gamma_q,i_q)_q}$ is defined as $\mathfrak{X}=\overline{\langle d_\gamma: \gamma\in\Gamma\rangle}$, where $d_\gamma$ is given by $d_\gamma=i_q(e_\gamma)$, with $q$ chosen so that $\gamma\in\Gamma_q\setminus\Gamma_{q-1}$. An efficient method of defining particular examples of BD-$\mathscr{L}_\infty$-spaces as quotients of canonical BD-$\mathscr{L}_\infty$-spaces was given in \cite{AMo}. The authors proved that given a BD-$\mathscr{L}_\infty$-space $\mathfrak{X}\subset\ell_\infty(\Gamma)$ any so-called self-determined set $\Gamma^\prime\subset\Gamma$ produces a further $\mathscr{L}_\infty$-space $Y=\overline{\langle d_\gamma: \gamma\in\Gamma\setminus \Gamma^\prime\rangle}$ and a BD-$\mathscr{L}_\infty$-space $\mathfrak{X}/Y$, with the quotient map defined by the restriction of $\Gamma$ to $\Gamma^\prime$.

S.A.~Argyros and R.~Haydon in \cite{AH} used the Bourgain-Delbaen method in order to produce an $\mathscr{L}_{\infty}$-space $\mathfrak{X}_{AH}$ which is hereditary indecomposable (HI) i.e. contains no closed infinitely dimensional subspace which is a direct sum of further two closed infinitely dimensional subspaces (in particular the space $\mathfrak{X}_{AH}$ admits no unconditional basic sequence), and with dual isomorphic to $\ell_{1}$. Moreover, using in an essential way the local unconditional structure imposed by the $\ell_{\infty}^{dim F_{n}}$-spaces they  proved that the space $\mathfrak{X}_{AH}$ has the scalar-plus-compact property i.e. every bounded operator on the space is of the form $\lambda I+K$, with $K$ compact and $\lambda$ scalar. 

Although it readily follows that there does not exist a Banach space with an unconditional basis and the scalar-plus-compact property, the latter property does not exclude rich unconditional structure inside the space. This is witnessed in \cite{AFHO}, where it was shown that, among other spaces, any separable and uniformly convex Banach space embeds into an $\mathscr{L}_{\infty}$-space with the scalar plus compact property. Therefore, a naturally arising question is whether there exists a Banach space with the scalar-plus-compact property that is saturated with unconditional basic sequences. 

Recall here that the first example of a space with an unconditional basis and a small family of operators is due to W.T.~Gowers, who ''unconditionalized'' in \cite{G} the famous Gowers-Maurey space, \cite{GM1}, producing a space $X_{G}$ with unconditional basis that solved the hyperplane problem. Afterwards, W.T.~Gowers and B.~ Maurey, \cite{GM2}, proved that any bounded operator on the space $X_{G}$ is of the form $D+S$, with $D$ diagonal and $S$ strictly singular. Gowers asked if an analogous property holds for the operators defined on subspaces of $X_{G}$ and if such property characterises a class of so-called tight by support Banach spaces, as it is in the case of complex HI spaces according to \cite{F}. This question was answered negatively by the first two named authors \cite{MP}. 

An example of a space with rich unconditional structure and a small family of bounded operators of a different type was presented in \cite{AM}, where the authors built a Banach space saturated with unconditional sequences and satisfying the following property: any bounded operator on the space is a strictly singular perturbation of a multiple of identity (recall that an operator is strictly singular provided none of its restriction to an infinitely dimensional subspace is an isomorphism onto its range). The construction used the saturated norms technique in a mixed Tsirelson space setting. 

In this paper we continue the study of Banach spaces with a small family of operators by showing the existence of a Banach space with a strongly heterogeneous structure. More precisely we construct a BD-$\mathscr{L}_\infty$-space $\mathfrak{X}_{Kus}$ with a basis satisfying the following properties:
\begin{enumerate}
\item Any bounded operator $T:\mathfrak{X}_{Kus}\to \mathfrak{X}_{Kus}$ is of the form $T=\lambda Id_{\mathfrak{X}_{Kus}}+K$, with $K$ compact and $\lambda$ scalar.
\item The space $\mathfrak{X}_{Kus}$ is saturated with unconditional basic sequences.
\item The dual space to $\mathfrak{X}_{Kus}$ is isomorphic to $\ell_1$.
\end{enumerate}

The structure of the space of bounded operators $\mathcal{B}(\mathfrak{X}_{Kus})$ implies that the space $\mathfrak{X}_{Kus}$ is indecomposable, however, being unconditionally saturated, it admits no HI structure. The space $\mathfrak{X}_{Kus}$ is thus the first example of a Banach space with the scalar-plus-compact property failing to have any HI structure. Let us recall that M.~Tarbard in \cite{T} constructed an indecomposable BD-$\mathscr{L}_\infty$-space $\mathfrak{X}_\infty$, that is not HI, but the Calkin algebra $\mathcal{B}(\mathfrak{X}_\infty)/\mathcal{K}(\mathfrak{X}_\infty)$ is isomorphic to $\ell_{1}$.

In order to build $\mathfrak{X}_{Kus}$ we adapt the idea of a construction of a Banach space $X_{ius}$ of \cite{AM} to the scheme of the Argyros-Haydon construction of Bourgain-Delbaen spaces \cite{AH}. This framework allows to pass from strictly singular operators to compact ones, however, in order to profit from this key property of the Argyros-Haydon construction we need to strengthen some results of \cite{AM} in the following way: we prove that if a bounded operator on the space converges to zero on the basis, then it converges to zero on any element of a special class of basic sequences, called RIS, instead of a saturating family of RIS (Prop. \ref{d-to-RIS}). In order to avoid a technical inductive construction of the space $\mathfrak{X}_{Kus}$ we follow the scheme of \cite{AMo}, defining $\mathfrak{X}_{Kus}$ as a suitable quotient of some variation of the canonical BD-$\mathscr{L}_\infty$-space $\mathfrak{B}_{mT}$ defined in \cite{AH}.

The balance between unconditional saturation and the restricted form of bounded operators on the whole space in the case of $X_{ius}$ was guaranteed by the form of so-called special functionals - the major tool in the construction of saturated norms. Any special functional in the norming set of $X_{ius}$ is a weighted average of a sequence of functionals, where the odd parts are weighted averages of the basis. Roughly speaking, the choice of the next functional of the weighted average is determined by the previously chosen odd parts and supports of the even parts. The freedom on the side of even parts allows changing signs of parts of even functionals of the weighted average, which in turn provides saturation by unconditional sequences. On the other hand, the control over the supports of the even parts guarantees the typical property of such construction, i.e. in our case given two RIS $(x_n)$ and $(y_n)$ with pairwise disjoint ranges and $\epsilon>0$ one is able to built on $(y_n)$ an average $\sum_na_ny_n$ of norm 1, such that $\norm[\sum_na_nx_n]<\epsilon$. This last property is crucial for proving the form of a bounded operator on a space. 

The direct translation of the special functionals described above into the setting of BD-spaces is impossible, as any change of signs of a part of a norming functional changes its support. In order to overcome this obstacle we use in the definition of functionals on the space $\mathfrak{X}_{Kus}$ projections on finite intervals instead of projections on right intervals of the form $[p,\infty)$ (Section 2.1) and substitute the equality of supports of even parts of special functionals by tight relation between tree-analysis of even parts (definition of special nodes, Section 5). The latter notion in the setting of the Argyros-Haydon construction comes from \cite{GPZ} and proves to be a very efficient tool in our case.

The paper is organized as follows: in Section 2. we describe the construction of the general space we shall use, including different kinds of analyses of norming functionals. Section 3. is devoted to the properties the basis, including the notion of neighbour nodes, within the general framework. In Section 4. we give the definition of $\mathfrak{X}_{Kus}$. In Section 5. and 6. we study the rapidly increasing sequences (RIS) and the dependent sequences respectively. Section 7. contains the results on bounded operators on the space, whereas Section 8. - the proof of unconditional saturation.

We are  grateful to Spiros Argyros  and Pavlos  Motakis for suggesting using the approach to defining BD-$\mathscr{L}_\infty$-spaces of \cite{AMo} which greatly simplified presentation of the definition of the space $\mathfrak{X}_{Kus}$.

\section{The base BD-$\mathscr{L}_\infty$-space $\mathfrak{X}_{\bGamma}$}\label{gensc}

We present in this section a BD-$\mathscr{L}_\infty$-space $\mathfrak{X}_{\bGamma}$, which is a minor modification of the space $\BmT$ defined in \cite{AH}. We shall define later the space $\mathfrak{X}_{Kus}$ as determined by some set $\Gamma\subset\bGamma$ following the general scheme of \cite{AMo}.

\subsection{Definition}

Pick $(m_{k})_{k},(n_{k})_{k},(l_{k})_k \nearrow +\infty$ such that $m_{1}=4, n_{1}=4, l_1=2$ and 
\begin{equation}\label{mklk}
m_{k}m_{k-1}\leq m_{1}^{l_{k}}\quad (\frac{n_{k-1}}{m_{k-1}})^{l_{k}}\leq \frac{n_{k}}{m_{k-1}m_{k}}, \ \ k\in\N.
\end{equation}
For example take $(2^{2^k})_k,(2^{2^{k^2}})_k,(2^k)_k$.

Following \cite{AH} we shall define recursively finite sets of nodes $\bDelta_q$ and $\bGamma_q=\bDelta_1\cup\dots\cup\bDelta_{q}$, $q\in\N$. Along with each set $\bDelta_{q}$ we define functionals $(\brc^*_\gamma)_{\gamma\in\bDelta_{q}}\subset\ell_1(\bGamma_q)$ and further $(\brd^*_\gamma)_{\gamma\in\bDelta_{q}}\subset\ell_1(\bGamma_{q})$ as $\brd^*_\gamma=e^*_\gamma-\brc^*_\gamma$. Having defined all sets $\bDelta_q$, $q\in\N$, we let $\bGamma=\cup_q\bGamma_q$.

We proceed now to the inductive construction. We let $\bDeltaq[1]=\{ 1\}$, $c_1^{*}=0$ and thus $\brd^{*}_{1}=e^{*}_1$.

Assume we have defined sets $\bDelta_1,\dots, \bDelta_{q}$. By $(e^*_\gamma)_{\gamma\in\bGamma_q}$ we denote the standard unit vector basis of $\ell_1(\bGamma_q)$. We enumerate the set $\bDelta_q$ using $\{\# \bGamma_{q-1}+1, \dots,\# \bGamma_q\}$ as the index set and in the set $\bGamma_{q}$ we consider the corresponding enumeration. Thus we can regard the sets $\bDelta_q$ and $\bGamma_q$ as intervals of $\N$. 
We use the notation $(\gamma_n)_n$ to refer to this enumeration.

For any interval $I\subset \bGammaq[q]$ let $\bar{P}^{*}_I$ be the projection onto $\langle \bar{d}^{*}_{\gamma_n}: n\in I\rangle$. For simplicity for any $n\in\N$ by $\bar{P}^{*}_n$ we denote the projection $\bar{P}^{*}_{(0,n]}$.

For each $q\in\N$ let $\Net_{1,q}$ be a finite symmetric $\sfrac{1}{4n_{q}^{2}}$-net of $[-1,1]$ containing $\pm 1$. We set 
$$
B_{p,q}=\{\lambda\esga[\eta]: \lambda\in \Net_{1,q}, \eta\in\bGamma_{q}\setminus\bGamma_{p}\},
$$
where for $p=0$ we let $\bGamma_{0}=\emptyset$. For simplicity we write $B_{q}=B_{0,q}$, $q\in\N$.

The set $\bDeltaq$ is defined to be the set of nodes
\begin{align*}
\bDeltaq&=\bigcup_{j=1}^{q}
\{
(q+1, 0, m_{j},I, \epsilon, b^{*}): 
I \text{ interval }\subset \bGammaq[q],\epsilon\in\{-1,1\},
 b^{*}\in B_{q}\,\,\textrm{and}\,\,
\bar{P}^{*}_Ib^{*}\neq 0
\}
\\
&
\cup\bigcup_{1\le p<q} \bigcup_{j=1}^{p} 
\{
(q+1, \xi, m_{j}, I, \epsilon, b^{*}): \xi\in \bDeltaq[p], \w(\xi)=m_{j}^{-1}, \age(\xi)<n_{j}, \\
&
\hspace{3cm}\epsilon\in\{-1,1\}, b^{*}\in B_{p,q}, 
I \text{ 
interval }\subset \bGammaq[q]\setminus\bGammaq[p], 
\bar{P}^{*}_Ib^{*}\neq 0
\}
.
\end{align*}
For any $\gamma\in\bDelta_q$ we define $\brc^*_\gamma$ as follows.
\begin{equation}\label{cgamma1}
\brc_{\gamma}^{*}=
\begin{cases}
\frac{1}{m_{j}}\epsilon \bar{P}^{*}_I b^{*} &\textrm{for }\,\,\gamma=(q+1, 0, m_j,I,\epsilon,b^{*})
\\
e^{*}_{\xi}+\frac{1}{m_{j}}\epsilon \bar{P}^{*}_Ib^{*} &\textrm{for }\,\,\gamma=(q+1, \xi, m_{j},I,\epsilon, b^{*})
\end{cases}
\end{equation}
We let also $\brd^*_\gamma=e^*_\gamma-\brc^*_\gamma$.

\begin{notation}
For any $\gamma=(q+1, 0, m_{j},I, \epsilon, b^{*})$ we define $\age(\gamma)=1$ and for $\gamma=(q+1, \xi, m_{j},I, \epsilon, b^{*})$ we define $\age(\gamma)=\age(\xi)+1$. For any $\gamma=(q+1, 0, m_{j},I, \epsilon, b^{*})$ or $\gamma=(q+1, \xi, m_{j},I, \epsilon, b^{*})$ we define $\rank(\gamma)=q+1$ and weight $\w(\gamma)=m_j^{-1}$. 
\end{notation}
\begin{remark}
The main difference with the construction from \cite{AH} is that in the $q$-th step instead of taking $b^{*}$ from the net of the unit ball of the suitable $\ell_1(\bGamma_q\setminus \bGamma_p)$, we take $b^{*}$ only of the form $\epsilon\lambda e^{*}_\eta$, where $\epsilon=\pm 1$, $\lambda$ belongs to the suitable net of $[-1,1]$, and $\eta\in\bGamma_q\setminus\bGamma_p$. Moreover we allow projections on all intervals $I\subset \bGamma_q\setminus \bGamma_p$, while in \cite{AH} the allowable intervals are of the form $I=\bGammaq[q]\setminus\bGammaq[p]$. 
\end{remark}
Adapting the reasoning of \cite{AH} we obtain the following two lemmas.
\begin{lemma}\label{lemma1} 
$\langle \brd^{*}_{\gamma_{i}}:i\leq n\rangle=\langle e^{*}_{\gamma_{i}}:i\leq n\rangle$ for every $n\in\N$.
\end{lemma}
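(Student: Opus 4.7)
The plan is to argue by induction on $n$, the key structural input being that for any $\gamma\in\bDelta_{q+1}$ the functional $\brc^*_\gamma$ is supported on $\bGamma_q$, which sits strictly before $\bDelta_{q+1}$ in the enumeration of $\bGamma$. Once this triangularity is in place the equality of the two linear spans becomes automatic.

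For the base case $n=1$, the initialization gives $\brc^*_1=0$, hence $\brd^*_1=e^*_1$, so both spans agree. For the inductive step, I would fix $n\geq 1$, assume the claim at level $n$, and set $q+1:=\rank(\gamma_{n+1})$, so that $\gamma_{n+1}\in\bDelta_{q+1}$. Then I would inspect the two cases of \eqref{cgamma1}: if $\gamma_{n+1}=(q+1,0,m_j,I,\epsilon,b^*)$ with $I\subset\bGamma_q$ and $b^*\in B_q$ supported on $\bGamma_q$, then $\brc^*_{\gamma_{n+1}}=\tfrac{\epsilon}{m_j}\bar{P}^*_Ib^*$ is supported on $\bGamma_q$; if $\gamma_{n+1}=(q+1,\xi,m_j,I,\epsilon,b^*)$ with $\xi\in\bDelta_p\subset\bGamma_p\subset\bGamma_q$, $I\subset\bGamma_q\setminus\bGamma_p$ and $b^*\in B_{p,q}$ supported on $\bGamma_q\setminus\bGamma_p$, then both summands of $\brc^*_{\gamma_{n+1}}=e^*_\xi+\tfrac{\epsilon}{m_j}\bar{P}^*_Ib^*$ are supported on $\bGamma_q$. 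Since the enumeration lists the whole of $\bGamma_q$ before the node $\gamma_{n+1}$, every index occurring in $\brc^*_{\gamma_{n+1}}$ is at most $n$.

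From here the conclusion is immediate: applying the induction hypothesis, $\brc^*_{\gamma_{n+1}}\in\langle e^*_{\gamma_i}:i\leq n\rangle=\langle \brd^*_{\gamma_i}:i\leq n\rangle$, so $\brd^*_{\gamma_{n+1}}=e^*_{\gamma_{n+1}}-\brc^*_{\gamma_{n+1}}$ lies in $\langle e^*_{\gamma_i}:i\leq n+1\rangle$, while $e^*_{\gamma_{n+1}}=\brd^*_{\gamma_{n+1}}+\brc^*_{\gamma_{n+1}}$ lies in $\langle \brd^*_{\gamma_i}:i\leq n+1\rangle$. The only point that requires genuine attention is the routine but crucial case check verifying that $\brc^*_\gamma$ is supported on $\bGamma_{\rank(\gamma)-1}$; once this is settled the proof is purely formal.
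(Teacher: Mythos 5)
Your proof is correct: the paper itself gives no argument, simply deferring to \cite{AH}, and the reasoning there is exactly your triangularity induction — $\brc^*_\gamma$ for $\gamma\in\bDelta_{q+1}$ lies in $\ell_1(\bGamma_q)$, hence in the span of the strictly earlier $e^*_{\gamma_i}$ (equivalently, by the inductive hypothesis, of the earlier $\brd^*_{\gamma_i}$), so the change of basis is unitriangular. Nothing is missing; your case check of \eqref{cgamma1} is the whole content.
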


\begin{lemma}\label{lemma2} $\norm[\bP^{*}_{m}]\leq \frac{m_{1}}{m_{1}-2}=2 $ for every $m\in\N$.
\end{lemma}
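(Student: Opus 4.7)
The plan is to prove the sharp pointwise estimate $\|\bar P^{*}_m e^{*}_\gamma\|_1 \leq \frac{m_1}{m_1-2}=2$ for every $\gamma \in \bar\Gamma$ and every $m\in\N$, by induction on the index $k$ of $\gamma = \gamma_k$ in the chosen enumeration of $\bar\Gamma$. By linearity and the standard duality between $\ell_1$ and $\ell_\infty$, this implies the operator norm bound $\|\bar P^{*}_m\|\leq 2$. The base case $\gamma = 1$ is immediate since $\bar d^{*}_1 = e^{*}_1$, so $\bar P^{*}_m e^{*}_1$ is either $e^{*}_1$ or $0$.

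For the inductive step, I first unfold the definition of $\bar c^{*}_\gamma$ along the age chain $\gamma_1,\gamma_2,\dots,\gamma_a=\gamma$ (where $\gamma_1$ has age $1$, $\gamma_{i+1}$ is the age-$(i+1)$ extension of $\gamma_i$, all members of the chain share the common weight $m_j^{-1}$, and $a=\age(\gamma)\leq n_j$). Iterating
\[
e^{*}_{\gamma_i} = \bar d^{*}_{\gamma_i} + e^{*}_{\gamma_{i-1}} + \frac{\epsilon_i\lambda_i}{m_j}\,\bar P^{*}_{I_i} e^{*}_{\eta_i}
\]
with the convention $e^{*}_{\gamma_0}\equiv 0$ yields the tree expansion
\[
e^{*}_{\gamma} \;=\; \sum_{i=1}^{a}\bar d^{*}_{\gamma_i} + \frac{1}{m_j}\sum_{i=1}^{a}\epsilon_i\lambda_i\,\bar P^{*}_{I_i} e^{*}_{\eta_i},
\]
in which each interval $I_i$ lies in $\bar\Gamma_{r_i-1}\setminus\bar\Gamma_{r_{i-1}}$ (with $r_i=\rank(\gamma_i)$ and $r_0=0$) and each $\eta_i$ is also in $\bar\Gamma_{r_i-1}\setminus\bar\Gamma_{r_{i-1}}$. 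In particular $\mathrm{index}(\eta_i)<\mathrm{index}(\gamma_i)\leq k$, so induction applies to the $\eta_i$.

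The crux is the following localization observation. Assume $k>m$ and let $i^{*}$ be the largest index with $\mathrm{index}(\gamma_{i^{*}})\leq m$ (set $i^{*}=0$ if none exists). For $i\leq i^{*}$ one has $I_i\subset(0,\#\bar\Gamma_{r_{i^{*}}-1}]\subset(0,m]$; for $i\geq i^{*}+2$ one has $\min I_i>\#\bar\Gamma_{r_{i-1}}\geq \mathrm{index}(\gamma_{i^{*}+1})>m$, so $I_i\cap(0,m]$ is empty. Applying $\bar P^{*}_m$ to the tree expansion and re-collapsing the low-age contribution via the same tree formula applied to $\gamma_{i^{*}}$, the only surviving error term is the one at $i = i^{*}+1$:
\[
\bar P^{*}_m e^{*}_{\gamma} \;=\; e^{*}_{\gamma_{i^{*}}} + \frac{\epsilon_{i^{*}+1}\lambda_{i^{*}+1}}{m_j}\,\bar P^{*}_{I_{i^{*}+1}\cap(0,m]} e^{*}_{\eta_{i^{*}+1}}.
\]
Writing $I_{i^{*}+1}\cap(0,m]=(p,r]$ and $\bar P^{*}_{(p,r]}=\bar P^{*}_r-\bar P^{*}_p$, the inductive hypothesis applied twice to $e^{*}_{\eta_{i^{*}+1}}$ gives $\|\bar P^{*}_{(p,r]} e^{*}_{\eta_{i^{*}+1}}\|_1 \leq 4$. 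Since $\|e^{*}_{\gamma_{i^{*}}}\|_1 \leq 1$, $|\lambda_{i^{*}+1}|\leq 1$, and $m_j\geq m_1=4$, I conclude $\|\bar P^{*}_m e^{*}_\gamma\|_1\leq 1 + \tfrac{4}{m_1} = 2$, closing the induction. The delicate point is the localization step: without the disjointness $I_i\subset\bar\Gamma_{r_i-1}\setminus\bar\Gamma_{r_{i-1}}$ (which is why the definition of $\bar\Delta_{q+1}$ restricts the interval $I$ to avoid $\bar\Gamma_p$ in the age-$>1$ case), the errors would pile up over all $a$ age levels and the uniform bound would collapse.
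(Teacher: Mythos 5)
Your proof is correct and is essentially the argument the paper intends: the paper gives no proof of its own here, merely noting that the lemma follows by ``adapting the reasoning of \cite{AH}'', and your induction on the enumeration index, with the age-chain unfolding, the localization of the intervals $I_i$, the re-collapse of the low-age part to the single unit vector $e^{*}_{\gamma_{i^{*}}}$, and the closing of the recursion $M=1+2M/m_1$ at $M=m_1/(m_1-2)=2$, is precisely that adaptation. No gaps.
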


The above lemma yields that $(\brd^{*}_{\gamma_{n}})_{n\in\N}$ is a triangular basis of $\ell_{1}(\Gamma)$ (in the sense of \cite{AH}, Def. 3.1). Let $(\brd_{\gamma_n})_{n\in\N}$ be its biorthogonal sequence. Regarding each projection $\bP_n^{*}$ as an operator $\ell_1(\Gamma)\to\ell_1^n$ we consider the dual operator $\bar{i}_n:\ell_\infty^n\to\ell_\infty(\Gamma)$, which is an isomorphic embedding satisfying $\norm[\bar{i}_n]\leq 2$. We are ready to define the following.
\begin{definition}
Let $\mathfrak{X}_{\bGamma}=\langle \brd_{\gamma_{n}}:n\in\N\rangle\subset\ell_\infty(\bGamma)$.
\end{definition}

Repeating the results of \cite{AH} in our setting we obtain the following.
\begin{theorem}
 The space $\mathfrak{X}_{\bGamma}$ is a BD-$\mathscr{L}_{\infty}$-space defined by the sequence $(\bGamma_q,\bar{i}_q)_q$. 
\end{theorem}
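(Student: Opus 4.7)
The theorem is the assertion that the data $(\bGamma_q,\bar i_q)_q$ built above satisfies the four axioms for a BD-$\mathscr{L}_\infty$-space recalled in the introduction: (i) $(\bGamma_q)$ is an increasing sequence of finite sets with union $\bGamma$; (ii) the operators $\bar i_q$ are uniformly bounded; (iii) each $\bar i_q$ is an extension, $\bar i_q(x)|_{\bGamma_q}=x$; and (iv) the compatibility $\bar i_q(x)=\bar i_p(\bar i_q(x)|_{\bGamma_p})$ holds for $q<p$. Point (i) is built into the construction, and I plan to deduce (ii)--(iv) by a formal duality argument from the two preceding lemmas.

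\textbf{Execution.} Lemma \ref{lemma1} identifies the range of $\bP^{*}_n$ with $\langle e^{*}_{\gamma_i}:i\le n\rangle=\ell_1(\bGamma_n)$, so $\bP^{*}_n$ may be viewed as a surjection $\ell_1(\bGamma)\twoheadrightarrow\ell_1^n$ whose dual is precisely $\bar i_n$. Lemma \ref{lemma2} then gives $\|\bar i_n\|=\|\bP^{*}_n\|\le 2$ uniformly in $n$, proving (ii). For (iii), observe that $\bP^{*}_n$ is the identity on its range and $e^{*}_{\gamma}\in\ell_1(\bGamma_n)$ for $\gamma\in\bGamma_n$, so $(\bar i_n x)(\gamma)=\langle x,\bP^{*}_n e^{*}_\gamma\rangle=\langle x,e^{*}_\gamma\rangle=x(\gamma)$. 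For (iv), the inclusion of ranges $\mathrm{range}\,\bP^{*}_q=\ell_1(\bGamma_q)\subset\ell_1(\bGamma_p)=\mathrm{range}\,\bP^{*}_p$ yields $\bP^{*}_q\bP^{*}_p=\bP^{*}_q$, whence $\ker\bP^{*}_p\subset\ker\bP^{*}_q$; passing to pre-annihilators gives $\mathrm{range}\,\bar i_q\subset\mathrm{range}\,\bar i_p$, which is precisely the stated compatibility relation.

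\textbf{Biorthogonal identification and main obstacle.} To align the construction with the convention $d_\gamma=i_q(e_\gamma)$ of \cite{AGM}, one finally checks that for $\gamma\in\bDeltaq[q]$ the vector $\bar i_q(e_\gamma)$ is biorthogonal to every $\brd^{*}_{\gamma'}$: when $\gamma'\in\bGamma_q$ this reduces to $\brd^{*}_{\gamma'}(\gamma)=\delta_{\gamma,\gamma'}$, which holds because $\brc^{*}_{\gamma'}$ is supported on $\bGammaq[\rank(\gamma')-1]$ and therefore vanishes at $\gamma\in\bDeltaq[q]$; when $\gamma'\notin\bGamma_q$ we have $\bP^{*}_q\brd^{*}_{\gamma'}=0$. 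The proof of the theorem itself is thus almost entirely formal. The genuine work is packaged into Lemma \ref{lemma2}, whose bound on $\|\bP^{*}_m\|$ had to be reestablished in the present variant of the BD construction, where $\brc^{*}_\gamma$ is built using arbitrary interval projections and $b^{*}$ is restricted to the form $\lambda e^{*}_\eta$. This is where the quantitative combinatorics (the weights $m_j$, the nets $\Net_{1,q}$, and the recursion \eqref{mklk}) enter, and it is the only step not reducible to pure duality.
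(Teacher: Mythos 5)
Your proof is correct and follows exactly the route the paper intends: the paper gives no argument of its own here, deferring to \cite{AH}, and the surrounding text already sets up the identification of $\bar i_n$ with the dual of $\bP^*_n$ and the bound $\norm[\bar i_n]\le 2$ from Lemma \ref{lemma2}, so your formal duality verification of the extension and compatibility axioms, together with the biorthogonality check $\bar i_q(e_\gamma)=\brd_\gamma$, is precisely the intended proof. You are also right that the only non-formal content is Lemma \ref{lemma2}.
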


\begin{notation} 
For any interval $I\subset \N$ we denote by $\bP_I$ the canonical projection $\bP_I: \mathfrak{X}_{\bGamma}\to\langle \brd_{\gamma_i}: i\in I\rangle$. In case $I=\{1,\dots, n\}$, $n\in\N$, we write simply $\bP_n$.

Given any $q\in\N$ we let $\bar{M}_q=\bar{i}_{\max \bDelta_q}[\ell_\infty(\bDelta_q)]$. In the rest of the paper we shall consider supports and ranges of vectors, thus also block sequences, with respect both to the basis $(\brd_{\gamma_n})_{n\in\N}$ of $\mathfrak{X}_{\bGamma}$ and to the FDD $(\bar{M}_q)_{q\in\N}$ of $\mathfrak{X}_{\bGamma}$. In the first case we shall use for any $x\in \mathfrak{X}_{\bGamma}$ the notation $\supp x$, $\rng x$, whereas in the second we write $\supp_{FDD} x$ and $\rng_{FDD} x$. 
\end{notation}

\begin{definition}
We say that a block sequence $(x_n)_{n}\subset \mathfrak{X}_{\bGamma}$ is skipped provided $\max\rng_{FDD} x_n+1<\min\rng_{FDD}x_{n+1}$ for each $n$. 
\end{definition}

\subsection {The analysis of nodes}

We introduce different types of analysis of a node following \cite{AH} and \cite{GPZ}, adjusting their scheme to our situation. 

\textbf{The evaluation analysis of $e_{\gamma}^{*}$}.

First we notice that every $\gamma\in\bGamma$ admits a unique analysis as follows (Prop. 4.6 \cite{AH}). Let $\w(\gamma)=m_{j}^{-1}$. Then using backwards induction we determine a sequence of sets $(I_i,\epsilon_i, b_{\eta_i}^{*},\xi_i)_{i=1}^{a}$ so that $\xi_a=\gamma$, $\xi_1=(q_1+1,0,m_j, I_1,\epsilon_1,b_{\eta_1}^{*})$ and $\xi_i=(q_i+1,\xi_{i-1},m_j,I_i,\epsilon_i,b^{*}_{\eta_i})$ for every $1<i\leq a$, where $b_{\eta_{i}}^{*}=\lambda_{i}\esga[\eta_{i}]$ for some $\lambda_{i}\in\Net_{1,q_{i}}$.

Repeating the reasoning of \cite{AH}, as $e^{*}_{\xi}=\brd^{*}_{\xi}+c^{*}_{\xi}$ for each $\xi\in\Gamma$, with the above notation we have
$$
e^{*}_{\gamma}= \sum_{i=1}^{a}\brd^{*}_{\xi_{i}}+m_{j}^{-1}\sum_{i=1}^{a}\ee_{i} \bP^{*}_{I_{i}}b^{*}_{\eta_{i}} =\sum_{i=1}^{a}\brd^{*}_{\xi_{i}}+m_{j}^{-1}\sum_{i=1}^{a}\ee_{i} \lambda_{i}\bP^{*}_{I_{i}}e^{*}_{\eta_{i}}
$$

\begin{definition}
Let $\gamma\in\bGamma$. Then the sequence $(I_i,\epsilon_i,\lambda_{i}e_{\eta_i}^{*}, \xi_i)_{i=1}^{a}$ satisfying all the above properties will be called the evaluation analysis of $\gamma$.

We define the bd-part and mt-part of $e^{*}_\gamma$ as 
$$
\bdp(e^{*}_\gamma)=\sum_{i=1}^{a} \brd_{\xi_i}^{*}, \ \ \ \mt(e^{*}_\gamma)=m_{j}^{-1}\sum_{i=1}^{a}\ee_{i}\lambda_{i}\bP^{*}_{I_{ i} }e^{*}_{\eta_{i}}.
$$
\end{definition}

\begin{remark}\label{M}
For any $\xi\in\Gamma_q$ we have $\bP^{*}_{\Delta_{\rank(\xi)}}e^{*}_\xi=\brd^{*}_\xi$.
\end{remark}

\textbf{The $I$(interval)-analysis of a functional $e_{\gamma}^{*}$}.

Let $I\subset\N$ and $\gamma\in\Gamma$ with $\bP^{*}_Ie^{*}_\gamma\neq 0$. Let $\w(\gamma)=m_{j}^{-1}$, $a\leq n_{j}$ and $(I_i,\epsilon_i,\lambda_{i}e_{\eta_i}^{*}, \xi_i)_{i=1}^{a}$ the evaluation analysis of $\gamma$. We define the $I$-analysis of $e_{\gamma}^{*}$ as follows:
\begin{enumerate}

\item[(a)] If for at least one $i$ we have $\bP^{*}_{I_i\cap I}e^{*}_{\eta_i}\neq 0$, then the $I$-analysis of $e_{\gamma}^{*}$ is of the following form
$$
(I_i\cap I,\epsilon_i,\lambda_{i}e^{*}_{\eta_i},\xi_i)_{i\in A_I}, 
$$
where $A_I=\{i:\ \bP^{*}_{I_i\cap I}e^{*}_{\eta_i}\neq 0\}$. In this case we say that $e_{\gamma}^{*}$ is $I$-decomposable.
\item[(b)] If $\bP^{*}_{I_i\cap I}e^{*}_{\eta_i}=0$ for all $i=1,\dots,a$, then we assign no $I$-analysis to $e_{\gamma}^{*}$ and we say that $e_{\gamma}^{*}$ 
is $I$-indecomposable. 
\end{enumerate}

\begin{remark}\label{eve-ana}
 Notice that in the second case above, as $I$ is interval and $\bP^{*}_Ie^{*}_\gamma\neq 0$, $\bP^{*}_I e^{*}_\gamma= d^{*}_{\xi_{i_0}}$ for some $i_0\in \{1,\dots,a\}$. In other words, $e^{*}_\gamma$ is $I$-indecomposable iff $\bP^{*}_Ie^{*}_\gamma=\brd^{*}_\xi$ for some element $\brd^{*}_\xi$ of the bd-part of $e^{*}_\gamma$. 
\end{remark}
Now we introduce the tree-analysis of $e_{\gamma}^{*}$ analogous to the tree-analysis of a functional in a mixed Tsirelson space (see \cite{AT} Chapter II.1).

We start with some notation. We denote by $(\T,\preceq)$ a finite tree, whose elements are finite sequences of natural numbers ordered by the initial segment partial order. Given $t\in\T$ denote by $S_t$ the set of immediate successors of $t$.

Let $(I_t)_{t\in\T}$ be a tree of intervals of $\N$ such that $t\preceq s$ iff $I_t\supset I_s$ and $t,s$ are incomparable iff $I_t\cap I_s=\emptyset$. For such a family $(I_t)_{t\in\T}$ and $t,s$ incomparable we write $t<s$ iff $I_t<I_s$ (i.e. $\max I_t<\min I_s$).

\textbf{The tree-analysis of a functional $e_{\gamma}^{*}$}.

Let $\gamma\in\bGamma$. The tree-analysis of $e_{\gamma}^{*}$ is a family of the form $(I_t,\epsilon_t,\eta_t )_{t\in\T}$ defined inductively in the following way:
\begin{enumerate}
\item $\T$ is a finite tree with a unique root denoted by $\emptyset$.
\item Set $\eta_{\emptyset}=\gamma$, $I_{\emptyset}=(1,\max \Delta_{\rank \gamma}]$, $\epsilon_\emptyset=1$ and let $(I_i,\epsilon_i,\lambda_{i}e_{\eta_i}^{*}, \xi_i)_{i=1}^{a}$ be the evaluation analysis of $e^*_{\eta_{\emptyset}}$. Set $S_{\emptyset}=\{(1),(2),\ldots,(a)\}$ and for every $s=(i)\in S_{\emptyset}$, $(I_s,\epsilon_s,\eta_s)=(I_i,\epsilon_i,\eta_i)$.
\item Assume that for $t\in\T$ the tuple $(I_t,\epsilon_t,\eta_t)$ is defined. Let $(I_i,\epsilon_i,\lambda_{i}e^{*}_{\eta_i}, \xi_i)_i$ be the evaluation analysis of $e_{\eta_t}^{*}$. Consider two cases:
\begin{enumerate}
\item If $e_{\eta_t}^{*}$ is $I_t$-decomposable, let $(I_i,\epsilon_i,\lambda_{i}e^{*}_{\eta_i}, \xi_i)_{i\in A_{I_t}}$ be the $I_t$-analysis of $e_{\eta_t}^{*}$. Set $S_t=\{(t^\smallfrown i): i\in A_{I_t}\}$. For every $s=(t^\smallfrown i)\in S_t$, let $(I_s,\epsilon_s,\eta_s)=(I_i,\epsilon_i,\eta_i)$.
 \item If $e_{\eta_t}^{*}$ is $I_t$-indecomposable, then $t$ is a terminal node of the tree-analysis. 
\end{enumerate}
\end{enumerate}

\begin{definition}
Given any $\gamma\in\Gamma$, in notation of Remark \ref{eve-ana} let 
$$
\mtsupp e^{*}_{\gamma}=\{\xi_t: \ t\in \T, t \text{ terminal}\}= \{\xi_{t}: t\in \T,\,\,\,\, \bP^{*}_{I_t}e^{*}_{\eta_t}=\brd^{*}_{\xi_t}\}
$$ 
and $\bdsupp e^{*}_\gamma=\supp e^{*}_\gamma\setminus \mtsupp e^{*}_\gamma$.
\end{definition}

\section{Properties of the basis $(\brd_{\gamma_n})$}

We present here estimates on the averages of the basis $(\bar{d}_{\gamma_n})_{n\in\N}$. 

\subsection{Neighbours nodes}
The result of this section is crucial for the estimates in the sequel. 
\begin{definition}
We shall call two nodes $\xi_{1},\xi_{2}$ neighbours if there exists $\gamma\in \Gamma$ with $\bdp(e^{*}_\gamma)=\sum_{j=1}^{a}\brd_{\zeta_{j}}^{*}$ such that $\xi_{i}=\zeta_{j_{i}}$ for some $j_{1}<j_{2}$.
\end{definition}
Note that from the definition it follows that for any neighbours $\xi_{1},\xi_{2}$ we have $\w(\xi_{1})=\w(\xi_{2})$.

 \begin{lemma}\label{neighbours}
Let $(\brd_{\gamma_{n}})_{n\in N}$ be a subsequence of the basis. Then there exists infinite $M\subset N$ such that no two nodes $\gamma_n, \gamma_m$, $n,m\in M$, are neighbours.
\end{lemma}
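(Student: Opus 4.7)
The plan is to apply an infinite Ramsey argument once the neighbour relation is understood in tree-theoretic terms. First I would verify that, for nodes of a fixed weight $m_j^{-1}$, the neighbour relation coincides with the ancestor--descendant relation in the \emph{predecessor forest}, in which the parent of a node of the form $(q+1,\xi,m_j,I,\epsilon,b^*)$ is declared to be $\xi$. Indeed, if $\xi_1=\zeta_{j_1}$ and $\xi_2=\zeta_{j_2}$ with $j_1<j_2$ come from the bd-part $\bdp(e^*_\gamma)=\sum_{i=1}^{a}\brd^*_{\zeta_i}$ of some $e^*_\gamma$, then the evaluation analysis forces $\zeta_{i+1}=(q_{i+1}+1,\zeta_i,m_j,I_{i+1},\epsilon_{i+1},b^*_{\eta_{i+1}})$, so each $\zeta_i$ is the predecessor-forest parent of $\zeta_{i+1}$ and $\xi_1$ is a proper ancestor of $\xi_2$. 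Conversely, taking $\gamma=\xi_2$ in the definition of neighbour shows that any same-weight ancestor--descendant pair is a pair of neighbours.

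Next I would observe that any chain in the predecessor forest of weight $m_j^{-1}$ has length at most $n_j$: along such a chain $\age(\zeta_i)=i$ strictly increases, and the defining conditions only permit extensions from nodes $\xi$ with $\age(\xi)<n_j$, so $\age$ never exceeds $n_j$ along the chain.

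With these two facts in hand I would $2$-colour the pairs $\{n,m\}\in[N]^2$ according to whether $\gamma_n$ and $\gamma_m$ are neighbours and apply the infinite Ramsey theorem to obtain an infinite monochromatic set $M\subset N$. If every pair in $M$ consisted of neighbours, then all $(\gamma_n)_{n\in M}$ would share a common weight $m_j^{-1}$ (since neighbours have equal weight) and, by the first step, would be totally ordered by the ancestor relation in the predecessor forest; the second step then forces $|M|\leq n_j$, contradicting infinitude of $M$. Therefore no pair in $M$ is a pair of neighbours, which is exactly the required conclusion.

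The conceptual heart of the argument is the first step: recognising that the somewhat awkward-looking neighbour relation is really the ancestor--descendant relation in a forest whose chains are uniformly bounded thanks to the age restriction $\age(\xi)<n_j$ built into the inductive definition of $\bDelta_{q+1}$. Once this is noticed, the rest is a short Ramsey/pigeonhole argument and I do not anticipate further difficulties.
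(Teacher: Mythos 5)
Your proof is correct and follows essentially the same route as the paper's: a Ramsey dichotomy on the neighbour relation combined with the observation that pairwise-neighbouring nodes of a fixed weight $m_j^{-1}$ must lie on a single chain of predecessors, whose length is bounded by $n_j$ via the age function and the restriction $\age(\xi)<n_j$ in the definition of $\bDelta_{q+1}$. The paper phrases the chain bound as $\age(\gamma_{j+1})\geq\age(\gamma_j)+1$ after ordering the monochromatic set by rank, but the content is identical to your predecessor-forest formulation.
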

The proof is based on the fact that the age is uniquely determined for each node.
\begin{proof}
If there are infinitely many nodes with different weights we are done. So assume that for all but finite nodes we have $\w(\gamma_{n})=m^{-1}_{k}$ for some fixed $k$.

Applying Ramsey theorem we obtain an infinite set such that either no two nodes from this set are neighbours or any two are neighbours.

In the first case we are done. Otherwise passing to a further subsequence we may assume that $\rank(\gamma_{n})<\rank(\gamma_{n+1})$ for every $n$.

Since we have that $\gamma_{j}, \gamma_{j+1}$ are neighbours it follows by a simple induction that
$$
\age(\gamma_{j+1})\geq \age(\gamma_{j})+1\geq j+1.
$$
Take $j=n_{k}+1$ and pick $e^{*}_\gamma$ of the form
$$
\egs=\sum_{r=1}^{a}\brd^{*}_{\xi_{r}}+m^{-1}_{k}\sum_{r=1}^{a}\ee_{r}\lambda_{r}\esga[\eta_{r}]\bP_{I_{r}}
$$
with $\bdgs[\gamma_{n_{k}+1}]=\bdgs[\xi_{r}]$ for some $r$. Then $\age(\xi_{r})\leq n_{k}$ which yields a contradiction and ends the proof.
\end{proof}

\subsection{Estimates on some averages of the basis}

In \cite{AH} it is proved that the sequence $(\sum_{\xi\in\Delta_{n}}\brd_{\xi})_{n\in\N}$ generates an $\ell_{1}$-spreading model in the space $\mathfrak{X}_{AH}$. We show that the norm of the vector $y=n_j^{-1}\sum_{i\in F}\brd_{\xi_i}$, where $\xi_i$'s are pairwise non-neighbours, is determined by the mt-part of the nodes.

In the sequel we shall use basic properties of mixed Tsirelson spaces. Recall that the mixed Tsirelson space $T[(\mathcal{A}_{n_k},m_{k}^{-1})_{k\in\N}]$ is the completion of $c_{00}$ with the norm defined by a norming set $D$, which is the smallest set in $c_{00}$ that contains the unit vectors $\{\pm e_n\}$ and satisfies for any $k\in\N$ the following condition: for any block sequence $f_1<\dots<f_d$, $d\leq n_k$, of elements of $D$ the weighted average $m_k^{-1}(f_1+\dots+f_d)$ also belongs to $D$. For further details see \cite{AT}.

\begin{lemma}\label{basisest} Let $x=n_j^{-1}\sum_{i\in G}\brd_{\xi_i}$, be such that no two $\xi_i$'s are neighbours and $\# G\leq n_{j}$. Then for any $\gamma\in\Gamma$ with $\w(\egs)=m_{k}^{-1}$ we have the following
$$
\abs{\egs(x)}\leq\begin{cases}
\frac{1}{n_{j}}+\frac{2}{m_{k}}\quad &\textrm{if}\,\,\,k\geq j
\\
\frac{7}{m_{k}m_{j}}\quad &\textrm{if}\,\,\,k<j,
\end{cases}
$$
In particular
$$
\norm[n_j^{-1}\sum_{i=1}^{n_j}\brd_{\xi_i}]\leq 7m_{j}^{-1}.
$$
\end{lemma}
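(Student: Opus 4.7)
My plan is to decompose $e^{*}_{\gamma}$ via its evaluation analysis as $e^{*}_{\gamma}=\bdp(e^{*}_{\gamma})+\mt(e^{*}_{\gamma})$, bound each summand on $x=n_{j}^{-1}\sum_{i\in G}\brd_{\xi_{i}}$ separately, and take the supremum over $\gamma\in\bGamma$ for the norm bound. The neighbour hypothesis is tailored to defeat the bd-part; the mt-part is controlled by a trivial $\ell_{\infty}$-estimate in the easy case $k\ge j$, and via induction on the tree-analysis in the hard case $k<j$.

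\textbf{Bd-part and case $k\ge j$.} Since $\bdp(e^{*}_{\gamma})=\sum_{r=1}^{a}\brd^{*}_{\xi_{r}}$ and these $\xi_{r}$'s are pairwise neighbours by the very definition of neighbour, the non-neighbour assumption on the $\xi_{i}$'s forces at most one coincidence $\xi_{i}=\xi_{r}$, giving $|\bdp(e^{*}_{\gamma})(x)|\le 1/n_{j}$. For the mt-part, I use the duality $(\bP^{*}_{I_{r}}e^{*}_{\eta_{r}})(x)=e^{*}_{\eta_{r}}(\bP_{I_{r}}x)$ together with the fact that the intervals $I_{r}\subset\bGamma_{q_{r}}\setminus\bGamma_{q_{r-1}}$ of the evaluation analysis are pairwise disjoint (as the ranks $q_{r}$ strictly increase). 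Writing $G_{r}=\{i\in G:\xi_{i}\in I_{r}\}$, one has $\bP_{I_{r}}x=n_{j}^{-1}\sum_{i\in G_{r}}\brd_{\xi_{i}}$ with $\sum_{r}\#G_{r}\le\#G\le n_{j}$. From $\|\brd_{\xi_{i}}\|_{\infty}\le 2$ (dual to Lemma~\ref{lemma2}) we get $|e^{*}_{\eta_{r}}(\bP_{I_{r}}x)|\le\|\bP_{I_{r}}x\|_{\infty}\le 2\#G_{r}/n_{j}$, and summation yields $|\mt(e^{*}_{\gamma})(x)|\le 2/m_{k}$. This proves the case $k\ge j$.

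\textbf{Case $k<j$, norm bound, and main obstacle.} The estimate $2/m_{k}$ is a factor $m_{j}$ too weak for the target $7/(m_{k}m_{j})$, and the improvement must exploit both the admissibility bound $a\le n_{k}$ and the growth conditions (\ref{mklk}). My plan is to induct on the height of the tree-analysis of $e^{*}_{\gamma}$: each non-terminal child $\eta_{r}$ of weight $m_{k_{r}}^{-1}$ triggers the inductive hypothesis applied to $e^{*}_{\eta_{r}}$ on the sub-average $\bP_{I_{r}}x$ (which still satisfies the lemma's hypotheses with the same $j$), terminal children contribute at most $1/n_{j}$ each, and (\ref{mklk}) guarantees $1/n_{j}\le 1/(m_{k}m_{j})$. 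The main obstacle is that crudely summing $n_{k}$ inductive bounds each of size $\sim 1/m_{j}$ overshoots by a factor $n_{k}$; the remedy is to combine the interval-disjointness bookkeeping $\sum_{r}\#G_{r}\le n_{j}$ — which forces most children to carry very small $\#G_{r}$ — with the sharp growth inequality $(n_{k-1}/m_{k-1})^{l_{k}}\le n_{k}/(m_{k-1}m_{k})$ from (\ref{mklk}), absorbing the accumulated factors into $6/(m_{k}m_{j})$. Finally, the norm estimate follows immediately: applying the case analysis with $\#G=n_{j}$ gives $|e^{*}_{\gamma}(x)|\le 7/m_{j}$ in both cases (absorbing the $1/n_{j}$ summand via (\ref{mklk})), and since the $\mathfrak{X}_{\bGamma}$-norm is the supremum of $|e^{*}_{\gamma}(\cdot)|$ over $\gamma\in\bGamma$, this produces $\|x\|\le 7m_{j}^{-1}$.
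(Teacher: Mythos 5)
Your treatment of the bd-part and of the case $k\ge j$ is correct and coincides with the paper's Case 1: the non-neighbour hypothesis kills all but one term of $\bdp(e^{*}_{\gamma})(x)$, and the disjointness of the intervals $I_{r}$ together with $|e^{*}_{\eta_r}(\brd_{\xi_i})|\le 2$ gives $|\mt(e^{*}_{\gamma})(x)|\le 2/m_{k}$. The reduction of the norm bound to the two displayed cases is also fine.

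The case $k<j$, however, contains a genuine gap. You correctly identify the obstacle — summing $n_{k}$ inductive bounds of size $\sim 1/(m_{k_r}m_{j})$ overshoots by a factor of order $n_{k}/m_{k_r}$, since the inductive bound for a non-terminal child does not shrink with $\# G_{r}$ — but the proposed remedy does not resolve it. The bookkeeping $\sum_{r}\#G_{r}\le n_{j}$ is irrelevant to the non-terminal children (their bound $7/(m_{k_r}m_{j})$ is insensitive to $\#G_{r}$), and the inequality $(n_{k-1}/m_{k-1})^{l_{k}}\le n_{k}/(m_{k-1}m_{k})$ is not the mechanism that controls the branching; asserting that the factors are ``absorbed into $6/(m_{k}m_{j})$'' is precisely the step that needs a proof. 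The paper does not run a self-contained induction on tree height. Instead it (a) builds, along the first $l_{j}$ levels of the tree-analysis, an auxiliary functional $\phi_{\gamma}$ lying in the norming set of the mixed Tsirelson space $T[(\mathcal{A}_{n_k},m_k^{-1})_k]$ (this is where the admissibility $a\le n_{k}$ is used: $m_{k}^{-1}\sum_{r}\phi_{r}$ is an allowed average) and invokes the known estimate of Lemma II.9 of \cite{AT}, $\phi_{\gamma}(y)\le 4m_{k}^{-1}m_{j}^{-1}$ for $y=2\sum_{i\in G}e_{i}/n_{j}$ — a nontrivial combinatorial fact about mixed Tsirelson spaces that your sketch would have to reprove; (b) truncates the splitting at depth $l_{j}$ and bounds the surviving deep nodes by the product of weights, $m_{1}^{-l_{j}}\le (m_{j}m_{j-1})^{-1}$, together with $\sum_r\#G_r\le n_j$ (this is where that bookkeeping actually enters); and (c) sums the accumulated $1/n_{j}$-errors as a geometric series $n_{j}^{-1}\sum_{i\le l_{j}}(n_{j-1}/m_{j-1})^{i}\le 2/(m_{j}m_{j-1})$ using \eqref{mklk}. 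Without the transfer to the auxiliary space (or an equivalent ``basic inequality'' argument) and the depth-$l_{j}$ truncation, the estimate $7/(m_{k}m_{j})$ is not established.
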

\begin{proof}
We shall construct functionals $\phi_{\gamma}$ in the norming set of the mixed Tsirelson space $X_{aux}=T[(\mathcal{A}_{n_k},m_{k}^{-1})_{k\in\N}]$ such that 
$$
\abs{e_{\gamma}^{*}(x)}\leq \phi_{\gamma}(y)+\frac{2}{m_{j}m_{j-1}}
$$
where $y=2\sum_{k\in G}e_{k}/n_{j}\in c_{00}(\N)$.

Take $\gamma\in\Gamma$ and consider its evaluation analysis  $e_{\gamma}^{*}=\sum_{r=1}^{a}\brd_{\beta_{r}}^{*}+m_{k}^{-1}\sum_{r=1}^{a}
\epsilon_{r}\lambda_{r}\esga[\eta_{r}]\bP_{I_{r}}$. Let $g_\gamma=\bdp(e^{*}_\gamma)$ and $f_\gamma=\mt(e^{*}_\gamma)$.

We shall consider two cases.

\noindent\textbf{Case 1}. $\w(\gamma)\leq m_{j}^{-1}$.

Since the nodes $(\xi_{i})_{i}$ are pairwise non-neighbours and $(\beta_{i})_{i} $ are pairwise neighbours it follows that
\begin{equation}\label{1eq} 
 \abs{g_{\gamma}(x)}\leq n_{j}^{-1}.
\end{equation}
Also for every $r\leq a$ using that $|\esga[\zeta](\brd_{\beta})|\leq 2$ for all $\zeta,\beta$, we get
\begin{equation}\label{2eq}
\abs{\esga[\eta_{r}]\bP_{I_{r}}(x)} \leq 2\frac{\# \{i: \rng(d^*_{\xi_i})\subset I_{r}\} }{n_{j}}\,.
\end{equation}
It follows from \eqref{1eq},\eqref{2eq}, using that $|\lambda_{r}|\leq 1$ for every $r$, that
\begin{equation}\label{3eq}
\abs{e_{\gamma}^{*}(x)}
\leq
\frac{1}{n_{j}}+2m_{k}^{-1}\sum_{r=1}^{a}\frac{\# \{i\mid \rng(d^*_{\xi_i})\subset I_{r}\} }{n_{j}}\leq\frac{1}{n_{j}}+\frac{2}{m_{k}}.
\end{equation}
Taking $\phi_{\gamma}=m_{k}^{-1}\sum_{n\in F} e_{n}^{*}$ where $F=\cup_{r\leq a}\{n\mid \gamma_n=\xi_i, \rng(d^*_{\xi_i})\subset I_{r}\, \textrm{for some}\,i\in G\}$ it follows that $\# F\leq n_{j}\leq n_{k}$ and $\phi_{\gamma}$ belongs to the norming set of the mixed Tsirelson space $X_{aux}$.

From \eqref{3eq} we get 
\begin{equation}\abs{e_{\gamma}^{*}(x)}\leq
\frac{1}{n_{j}}+
2m_{k}^{-1}\sum_{n\in F}\frac{e_{n}^{*}(e_{n})}{n_j}=
\frac{1}{n_{j}}+\phi_{\gamma}(y).
\label{4eq}
\end{equation}
\textbf{Case 2.} $\w(\gamma)=m_{k}^{-1}> m_{j}^{-1}$.

Let $(I_t,\varepsilon_t,\eta_t)_{t\in\T}$ be the tree-analysis of $e^{*}_{\gamma}$ and $\T'$ be the subtree of $\T$ consisting of all nodes $t$ of height at most $l_j$. 
We will describe how to define certain functionals $(\phi_t)_{t\in\T'}$ in the norming set of $T[(\mathcal A_{n_k},m_k^{-1})_{k\in\N}]$ that we will use to obtain the desired estimate.

As in the previous case we get
\begin{equation}\label{5eq}
 \abs{g_{\gamma}(x)}\leq n_{j}^{-1}.
\end{equation}
Using that $\egs=g_{\gamma}+f_{\gamma}$ and $|\lambda_r|\leq 1$ for every $r$, we get
\begin{equation}\abs{e_{\gamma}^{*}(x)}\leq n_{j}^{-1}+\abs{f_{\gamma}(x)}
\leq 
n_{j}^{-1}+
m_{k}^{-1}\sum_{r=1}^{a}|\esga[\eta_{r}]\bP_{I_{r}}(x)|.
\label{6eq}
\end{equation}
We shall split now the successors $\esga[\eta_{r}]$ of $\egs$ into those with weight smaller or equal to $m_{j}^{-1}$ and those with weight bigger that $m_{j}^{-1}$. For a node $\gamma$ we set
$$ 
S_{\gamma,1}=\{r\in S_{\gamma}: w(\eta_{r})\leq 
m_{j}^{-1}\}\quad\textrm{and}\quad S_{\gamma,2}=S_{\gamma}\setminus S_{\gamma,1}.
$$
From \eqref{6eq} we get
\begin{align*}\abs{e_{\gamma}^{*}(x)}&\leq
n_{j}^{-1}+
m_{k}^{-1}\left( \sum_{r\in S_{\gamma,1}} \abs{\esga[\eta_{r}]\bP_{I_{r}}(x)}
+\sum_{r\in S_{\gamma,2} } 
\abs{\esga[\eta_{r}]\bP_{I_{r}}(x)}\right)
\end{align*}
Using \eqref{4eq} for the $r\in S_{\gamma,1}$,
 \eqref{6eq} for the $r\in S_{\gamma,2}$ 
 and that $\# S_{\gamma,1}+\# S_{\gamma,2}\leq n_{k}$, $k<j$, we get \begin{align}
\abs{e_{\gamma}^{*}(x)}
&
\leq
n_{j}^{-1}+
\frac{n_{k}}{m_{k}n_{j}}+
\frac{1}{m_{k}}
\left(
\sum_{r\in S_{\gamma,1} }\phi_{r}(y)
+\sum_{r\in S_{\gamma,2}} 
w(e_{\eta_{r}})\sum_{s\in S_{r} }
\abs{e_{\eta_{s}}^{*}\bP_{I_{s}}(x)} \right)
 \notag
 \\
 &\leq\frac{1}{ n_{j} }(1+\frac{n_{j-1} }{ m_{j-1} })+
 \frac{1}{m_{k} }
 \left(
 \sum_{r\in S_{\gamma,1} }\phi_{r}(y)
 +\sum_{r\in S_{\gamma,2}} 
w(e_{\eta_{r}})\sum_{s\in S_{r} }
\abs{e_{\eta_{s}}^{*}\bP_{I_{s}}(x)}
\right).
\label{7eq}
\end{align}
Note that the functional $m_{k}^{-1} \left( \sum_{r\in S_{\gamma,1} }\phi_{r}\right)$ belongs to the norming set of the mixed Tsirelson space $X_{aux}$ and has room for $\# S_{\gamma,2}$ more functionals.

We shall replay the above splitting for every $e_{\eta_{s}}^{*}\bP_{I_{s}} $. To avoid complicated notation we shall set $n_{s}=\# S_{s}$ and $m^{-1}_{s}=w(e_{\eta_{s}}^{*})$. From \eqref{7eq} using $e_{\eta_{s}}^{*}\bP_{I_{s}}$ in the place of $\esga$ we get
 \begin{align}\abs{e_{\eta_{s}}^{*}\bP_{I_{s}}(x) }\leq 
 \frac{1}{n_{j}}(1+\frac{n_{j-1}}{ m_{j-1} } )+
 m_{s}^{-1}
 \left(
 \sum_{t\in S_{s,1} }\phi_{t}(y)
 +\sum_{t\in S_{s,2}} 
 m_{t}^{-1}\sum_{u\in S_{t}}
 |e_{\eta_{u}}^{*}\bP_{I_{u}}(x)| 
 \right).
 \label{8eq}
 \end{align}
It follows that
\begin{align}\label{9eq}
\sum_{r\in S_{\gamma,2}} 
w(e_{\eta_{r}})\sum_{s\in S_{r} }&
\abs{e_{\eta_{s}}^{*}\bP_{I_{s}}(x)}
\leq
\sum_{r\in S_{\gamma,2}}m_{r}^{-1}
\sum_{s\in S_{r}}
 \frac{1}{n_{j}}(1+\frac{n_{j-1}}{ m_{j-1} } )
\\
&+
\sum_{r\in S_{\gamma,2}}m_{r}^{-1}
\sum_{s\in S_{r}}
m_{s}^{-1}
 \left(
 \sum_{t\in S_{s,1} }\phi_{t}(y)
 +\sum_{t\in S_{s,2}} 
 m_{t}^{-1}\sum_{u\in S_{t}}
 \abs{e_{\eta_{u}}^{*}\bP_{I_{u}}(x) }
 \right)
\notag
 \\
&\leq 
n_{k}
\frac{n_{r}}{m_{r}}\frac{1}{n_{j}}(1+\frac{n_{j-1}}{ m_{j-1} } )
\qquad\textrm{since $\# S_{\gamma,2}\leq n_{k}$ and $\# S_{r}\leq n_{r}$}
\notag
\\
&\quad +
\sum_{r\in S_{\gamma,2}}m_{r}^{-1}
\sum_{s\in S_{r}}
m_{s}^{-1}
 \left(
 \sum_{t\in S_{s,1} }\phi_{t}(y)
 +\sum_{t\in S_{s,2}} 
 m_{t}^{-1}\sum_{u\in S_{t}}
\abs{e_{\eta_{u}}^{*}\bP_{I_{u}}(x)}
 \right).
\notag
\end{align}
 By \eqref{7eq} and \eqref{9eq}, using that $\frac{n_{r}}{m_{r}}, 
\frac{n_{k}}{m_{k}}\leq\frac{n_{j-1}}{m_{j-1}}$ we get
 \begin{align}\label{11eq}
 &\abs{e_{\gamma}^{*}(x)}
 \leq
 \frac{1}{ n_{j} }(
 1+\frac{n_{j-1}}{ m_{j-1} } +
 (
 \frac{n_{j-1} }{ m_{j-1}}
 )^{2}
 + 
 (\frac{n_{j-1} }{ m_{j-1}}
 )^{3}
 )
 \\
 &+
 \frac{1}{m_{k} }
 \left(
 \sum_{r\in S_{\gamma,1} }\phi_{r}(y)
 +
 \sum_{r\in S_{\gamma,2}}m_{r}^{-1}
 \sum_{s\in S_{r}}
 m_{s}^{-1}
 \left(
 \sum_{t\in S_{s,1} }\phi_{t}(y)
 +\sum_{t\in S_{s,2}} 
 m_{t}^{-1}\sum_{u\in S_{t}}
 \abs{e_{\eta_{u}}^{*}\bP_{I_{u}}(x)} 
 \right)
 \right).
 \label{12eq}
 \end{align}
 Note that the functional
 $$
 \phi_{\gamma}=\frac{1}{m_{k} }
 \left(
 \sum_{r\in S_{\gamma,1} }\phi_{r}(y)
 +
 \sum_{r\in S_{\gamma,2}}m_{r}^{-1}
 \sum_{s\in S_{r}}
 m_{s}^{-1}
 \sum_{t\in S_{s,1} }\phi_{t}(y)
 \right)
 $$
 belongs to the norming set of the mixed Tsirelson space $X_{aux}$ and the functional $m_{s}^{-1} \sum_{t\in S_{s,1}}\phi_{t} $ has room for $\# S_{s,2}$ more functionals.

We continue this splitting at most $l_{j}$ times, see \eqref{mklk} for the choice of $l_{j}$, or till $S_{s,2}=\emptyset$ i.e. we do not have nodes with weight$>m_{j}^{-1}$.

If we stop before the $l_{j}$-th step we get that $\abs{\egs(x)}$ is dominated by $\phi_{\gamma}(y)$ plus the errors in \eqref{11eq}, where the sum end to the $l_{j}$-th power of $n_{j-1}/m_{j-1}$. Since $\phi_{\gamma}$ belongs to the norming set of the mixed Tsirelson space $X_{aux}$ it follows from \cite{AT}, Lemma II.9, that 
$$
\phi_{\gamma}(y)\leq 4m_{k}^{-1}m_{j}^{-1}.
$$
If we continue the splitting $l_{j}$-times, then there exists some node with $\w(\gamma_{t})>m_{j}^{-1}$. For every such node we have
$$
\left(\prod_{s\prec t}w(e_{\gamma_{s}})\right)
\abs{e_{\gamma_{t}}^{*}(x)}\leq 
(\frac{1}{m_{1}})^{l_{k}}
\abs{e_{\gamma_{t}}^{*}(x)}\leq
2m_{k}^{-1}m_{j}^{-1}\frac{\# \{i: \rng(d^*_{\xi_i})\subset
I_{t}\cap G\}}{n_{j}}
$$
since $m_{1}^{-l_{j}}\leq (m_{j}m_{j-1})^{-1}$, see \eqref{mklk}.

Summing the estimation of all those nodes we get upper estimate equal to $\sfrac{2 \# G}{m_{k}m_{j}n_{j}}\leq\sfrac{2}{m_{k}m_{j}}$.

The remaining nodes provide us with a functional in the norming set of the mixed Tsirelson space $X_{aux}$. By \cite{AT} its action on $y$ is bounded by $4m_{k}^{-1}m_{j}^{-1}$.

It remains to handle the errors \eqref{11eq}. In each case we have
$$
\frac{1}{n_{j}}
(1+\frac{n_{j-1}}{m_{j-1}} 
+(\frac{n_{j-1}}{m_{j-1}})^{2}+\dots+(\frac{n_{j-1}}{m_{j-1}})^{l_{j}})
\leq
\frac{1}{n_{j}}\frac{ 
(n_{j-1}/m_{j-1})^{l_{j}+1}-1}{(n_j/m_{j-1})-1}\leq\frac{2}{m_{j}m_{j-1}}.
$$
Summing all the above estimates we get an upper estimate $7m_{k}^{-1}m_{j}^{-1}.$
\end{proof}

\begin{remark}
Analogous estimates for the averages of the basis hold by the same argument in other spaces built in the Argyros-Haydon scheme of Bourgain-Delbaen construction.
\end{remark}
\begin{corollary}\label{diffwb}
Let $x=m_{j}n_{j}^{-1}\sum_{i=1}^{n_{j}}\brd_{\xi_{i}}$ such that no two $\xi_{i}$'s are neighbours. Let $i<j$, $(e^{*}_{\eta_{p}})_{p=1}^{n_{i}}$ be nodes such that $\w(e^{*}_{\eta_{p}})=m_{l_p}\ne m_{j}$ and $m_{l_{p}}<m_{l_{p+1}}$ for all $p\leq n_{i}$. Then
\begin{equation}\label{ediffwb}
\sum_{p=1}^{n_{i}}|e^{*}_{\eta_{p}}\bP_{I_{p}}(x)|\leq \frac{14}{m_{p_1}}.
\end{equation}
\end{corollary}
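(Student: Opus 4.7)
The plan is to reduce this corollary to a term-by-term application of Lemma~\ref{basisest}, followed by a geometric summation exploiting the strict monotonicity of the weights. Writing $x = m_j y$ with $y = n_j^{-1}\sum_{i=1}^{n_j}\bar d_{\xi_i}$, I would first verify that the conclusion of Lemma~\ref{basisest} continues to hold when $e^*_\gamma$ is replaced by $e^*_\gamma\bar P_I$ for any interval $I$: by Remark~\ref{eve-ana}, either $e^*_\gamma\bar P_I = \bar d^*_\xi$ for a terminal node $\xi$ of the tree-analysis (in which case the bound is immediate from pairwise non-neighbourness of the $\xi_i$), or the $I$-analysis plays the role of the evaluation analysis and the tree-analysis argument from the proof of Lemma~\ref{basisest} applies verbatim, with $I$ replacing the root interval.

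Applying this to each $e^*_{\eta_p}\bar P_{I_p}$ gives, using that the hypothesis $m_{l_p}\neq m_j$ excludes the borderline,
\[
|e^*_{\eta_p}\bar P_{I_p}(x)| \le
\begin{cases}
7/m_{l_p} & \text{if } l_p<j,\\
m_j/n_j + 2m_j/m_{l_p} & \text{if } l_p>j.
\end{cases}
\]
I would then split $\sum_{p=1}^{n_i}$ according to these two regimes. The strict monotonicity of $(m_{l_p})_p$ combined with the fast growth of $(m_k)$ built into the example choice $m_k=2^{2^k}$ turns both partial sums into geometric tails: the low-weight part telescopes to $\sum_{l_p<j}7/m_{l_p}\le 14/m_{l_1}$, while the high-weight part $\sum_{l_p>j}2m_j/m_{l_p}$ is dominated by $4m_j/m_{j+1}$. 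The cumulative error $n_im_j/n_j$ is rendered negligible by condition~\eqref{mklk}, which together with $i<j$ forces it to be of order $(m_{j-1}/n_{j-1})^{l_j-1}$.

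The main technical obstacle I anticipate is verifying that these pieces fit inside the single clean bound $14/m_{l_1}$. In the pure low-weight regime this is automatic. In the mixed regime one has $l_1 \le j-1$, so $m_{l_1}\le m_{j-1}$, and the super-exponential growth $m_{j+1}\ge m_j^2$ of the example choice gives $4m_j/m_{j+1}\le 4/m_j \le 7/m_{l_1}$, so the low- and high-weight halves together fit within $14/m_{l_1}$. In the pure high-weight regime one similarly exploits $m_{l_p}\ge m_{l_1}\ge m_{j+1}\ge m_j^2$ to absorb the factor $m_j$ into a bound of the form $\text{const}/m_{l_1}$. Combining all these estimates and absorbing the negligible $n_im_j/n_j$ term yields the desired inequality.
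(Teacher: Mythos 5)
Your proposal is correct and follows essentially the same route as the paper: split the sum according to whether $m_{l_p}<m_j$ or $m_{l_p}>m_j$, apply Lemma~\ref{basisest} termwise to $e^{*}_{\eta_p}\bP_{I_p}(x)$, and sum the resulting geometric tails using the growth of $(m_k)$ and condition~\eqref{mklk}; your explicit remark that the lemma's bounds persist under composition with $\bP_{I}$ (via the $I$-analysis or Remark~\ref{eve-ana}) is a point the paper leaves implicit. The only quibble is the final bookkeeping in the mixed regime, where the low-weight tail alone can approach $14/m_{l_1}$, so the high-weight contribution must be absorbed into the slack $7\cdot 2^{1-K}/m_{l_1}$ left by a $K$-term low-weight sum rather than into a separate $7/m_{l_1}$ allowance; this is easily repaired since $2^{K}m_{l_1}\leq 2m_{j-1}$ while $4/m_j\leq 1/m_{j-1}$.
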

\begin{proof}
From Lemma  \ref{basisest} we get
\begin{align*}
\sum_{p=1}^{n_{i}}|e^{*}_{\eta_{p}}\bP_{I_{p}}(x)|&\leq
\sum_{p:l_p<j}|e^{*}_{\eta_{p}}\bP_{I_{p}}(x)|+
\sum_{p:l_p>j}|e^{*}_{\eta_{p}}\bP_{I_{p}}(x)|
\\
&\leq 
\sum_{p:l_p<j}\frac{7}{m_{p}}+
\sum_{p:l_p>j}(\frac{1}{n_{j}}+\frac{2m_{j}}{m_{p}})
\\
&\leq 
\sum_{p:l_p<j}\frac{7}{m_{p}}+
\frac{n_i}{n_{j}}+\sum_{p:l_p>j}\frac{2}{m_{p-1}}\leq \frac{14}{m_{p_1}}.
\end{align*}

\end{proof}

\section{The space $\mathfrak{X}_{Kus}$}

In this section we define the space $\mathfrak{X}_{Kus}$. We shall need the following notion from \cite{AMo}.

\begin{definition} Let $\mathfrak{X}$ be a BD-$\mathscr{L}_\infty$-subspace of $\ell_\infty(\tilde{\Gamma})$. A subset $\Gamma$ of $\tilde{\Gamma}$ is called self-determined provided $\langle\bar{d}_\gamma^{*}: \gamma\in\Gamma\rangle = \langle e_\gamma^{*}: \gamma\in\Gamma\rangle$, where $(\bar{d}_\gamma^{*})_{\gamma\in\Gamma}$ denotes the biorthogonal sequence to the basis $(\bar{d}_\gamma)_{\gamma\in\Gamma}$ and for $\gamma\in\Gamma$, $e_\gamma^{*}$ denotes the element $e_{\gamma}$ of $\ell_1(\tilde{\Gamma})$ restricted to $\mathfrak{X}$.
 \end{definition}

Now we proceed to the choice of a self-determined subset $\Gamma$ of $\bGamma$ which will determine the space $\mathfrak{X}_{Kus}$. This set will consist of regular and special nodes.

We introduce first the notion which will describe the ''freedom'' in choosing special nodes.

For any $\gamma\in\bGamma$ we write $\rank(\bdp (e^{*}_\gamma))=\{\rank \xi_i, i\in A\}$, where $\bdp (e^{*}_\gamma)=\sum_{i\in A}d^{*}_{\xi_i}$. 
\begin{definition}\label{tree-interval}
We say that the functionals $e^{*}_\gamma,e^{*}_{\tilde{\gamma}}$, $\gamma, \tilde{\gamma}\in\bar{\Gamma},$ have compatible tree-analyses if 
 \begin{enumerate}
 \item[(CT1)] $e^{*}_\gamma,e^{*}_{\tilde{\gamma}}$ have tree-analyses $(I_t,\varepsilon_t,\eta_t)_{t\in\T},\ (I_t,\tilde{\varepsilon}_t,\tilde{\eta}_t)_{t\in\T}$ respectively,
 \item[(CT2)] $\w(\eta_t)=\w(\tilde{\eta}_t)$  for any $t\in\T$, 
 \item[(CT3)] $\mtsupp e^{*}_{\eta_t}=\mtsupp e^{*}_{\tilde{\eta}_t}$ for any $t\in\T$,
 \item[(CT4)] $\rank(\eta_t)=\rank(\tilde{\eta}_t)$ for any $t\in\T$,
 \item[(CT5)] $\rank(\bdp (e^{*}_{\eta_t}))=\rank(\bdp (e^{*}_{\tilde{\eta}_t}))$ for any $t\in\T$.
 \end{enumerate}
\end{definition}

For every $\gamma=(q+1,\xi, m_{k},\epsilon,I,\esga[\eta])\in \bGamma$ and $x\in \mathfrak{X}_{\bGamma}$ we set
\begin{equation}
 \lambda_{\gamma,x}=\begin{cases} \epsilon e^*_\eta (x) \,\,\,&\textrm{if}\,\,\esga[\eta](x)\ne 0\\
\epsilon n_{k}^{-2} &\textrm{otherwise}.
\end{cases}
\end{equation}
Notice that in the above formula we do not use the projection $P_I$, which in particular yields that $|\lambda_{\gamma,x}|\leq 1$ for $x$ with $\norm[x]\leq 1$. On the other hand, for any $x$ with $\rng (x)\subset I$ we have $e^*_\eta (x)=e^*_\eta P_I(x)$ and we shall use the above notion in such context.

\begin{definition}[The tree of the special sequences]
We denote by $\mathcal{Q}$ the set of all finite sequences of pairs $\{(\zeta_1, \bar{x}_1),\ldots,(\zeta_k, \bar{x}_k)\}$ satisfying the following:
\begin{itemize}
\item[(i)] $\zeta_i\in\bar{\Ga}$ with $\rank(\zeta_i)=q_{i}\geq\min\ran_{FDD} \bar{x}_{i}$ for $i=1,\ldots,k$, 
\item[(ii)]  $(\bar{x}_{1}, \dots,\bar{x}_k)$ are vectors with rational coefficients with respect to the basis $(\brd_{\gamma})_{\gamma\in\bGamma}$, successive with respect to the FDD $(\bar{M}_q)_q$.
\end{itemize}
\end{definition}
We choose a one-to-one function $\sigma:\mathcal{Q}\rightarrow\N$, called the coding function, so that 
\begin{equation}\label{coding growth}
\sigma\left(\left\{\left(\zeta_1, \bar{x}_1\right),\ldots,\left(\zeta_k, \bar{x}_k\right)\right\}\right) > \w(\zeta_k)^{-1}\max \supp_{FDD} \bar{x}_{k}\quad\forall \{(\zeta_1,\bar{x}_1),\ldots,(\zeta_k, \bar{x}_k)\}\in\mathcal{Q}.
\end{equation}
\begin{definition}\label{defspecial}
A finite sequence $(\zeta_i, \bar{x}_i)_{i=1}^d\in\mathcal{Q}$ is called a $j$-special sequence, $j\in\N$, if $d\leq n_{2j-1}$ and the following conditions are satisfied.
\begin{enumerate}
\item[(i)] $\zeta_{1}=(q_{1}+1,0,m_{2j-1}, I_{1},\epsilon_{1},\egs[\eta_{1}])$ and $\zeta_{i}=(q_{i}+1,\zeta_{i-1},m_{2j-1},I_{i},\epsilon_{i},\lambda_{i}\egs[\eta_{i} ] )$ for every $i\leq d$,
\item[(ii)] $\w(\eta_{1})=m_{4l-2}^{-1}<n_{2j-1}^{-2}$ and $\w(\eta_i) = m_{4\sigma((\zeta_1, \bar{x}_1),\ldots,(\zeta_{i-1}, \bar{x}_{i-1}))}^{-1}$ for $i=2,\ldots,d$.
\item[(iii)] if $i$ is odd then $\lambda_{i}=1$ and $\norm[\bar{x}_i]\leq 1$,
\item[(iv)] if $i$ is even then $\epsilon_{i}=1$, $\eta_{i}$ is chosen to satisfy
$$ 
\mt(\egs[\eta_{i}])= m_{4\sigma({(\zeta_p,\bar{x}_p)_{p=1}^{i-1}})}^{-1} \sum_{r=1}^{n_{4\sigma({(\zeta_{p},\bar{x}_p)_{p=1}^{i-1}})}}\bar{d}^{*}_{\beta_{r}},
$$
where $(\bar{d}_{\beta_r})_r$ are pairwise non-neighbours. Moreover, we let
$$
\bar{x}_{i}=\frac{m_{4\sigma((\zeta_{k},\bar{x}_{k})_{k=1}^{i-1})}}{n_{4\sigma((\zeta_{k},\bar{x}_{k})_{k=1}^{i-1})}}\sum_{r=1}^{n_{4\sigma((\zeta_{k},\bar{x}_{k})_{k=1}^{i-1})}}\bar{d}_{\beta_{r}}
$$
and $\lambda_{i}\in \Net_{1,q_{i}}$ is chosen to satisfy $\vert\lambda_{i}-\lambda_{\zeta_{i-1}, \bar{x}_{i-1}}\vert<\sfrac{1}{4n_{q_{i-1}}^{2}}$.

\end{enumerate}

We denote by $\mathcal{U}$ the tree of all special sequences, endowed with the natural ordering ``$\sqsubseteq$'' of initial segments.

Fix $\Gamma=\cup_{q}\Gamma_{q}$, $\Gamma_{q}\subset \bGamma_{q}$. A $j$-special sequence $(\zeta_1,\bar{x}_1)$, with $\zeta_1=(q+1,0,m_{2j-1}, I_{1},\epsilon,\egs[\eta])$ is called $(\Gamma, j)$-special if $\eta\in\Gamma_{q}$. 
A $j$-special sequence $(\zeta_{i},\bar{x}_{i})_{i=1}^{d}$, $d\leq n_{2j-1}$, with $\zeta_{i}=(q_{i}+1,\zeta_{i-1},m_{2j-1},I_{i},\epsilon_{i}, \lambda_{i} e^{*}_{\eta_{i}})$ is called $(\Gamma,j)$-special if $\eta_{d}\in\Gamma_{q}\setminus\Gamma_{q_{d}-1}$, $\zeta_{d-1}\in\Gamma_{q_{d-1}+1}$ and $(\zeta_{i},\bar{x}_{i})_{i=1}^{d-1}$  is a $(\Gamma_{q_{d-1}},j)$-special sequence.
\end{definition}

Now we are ready to define inductively on $q\in\N$ the families of nodes $(\Delta_q)_q$ and $(\Gamma_q)_q$ satisfying $\Delta_q\subset\bDelta_q$ and $\Gamma_q=\cup_{p=1}^q\Delta_p$ for any $q\in\N$.

Set $\Gamma_1=\Delta_1=\bDelta_{1}$. Fix $q\in\N$ and 
assume we have defined all objects up to $q$-th level. 

The set of regular nodes is defined as
\begin{align*}
\Delta_{q+1}^{reg}&=\bigcup_{j=1}^{\lfloor( q+1)/2\rfloor}
\{
(q+1, 0, m_{2j}, I, \epsilon, e^{*}_{\eta}) \in\bar{\Delta}_{q+1}: \eta\in\Gamma_q \}
\\
&
\cup\bigcup_{1\le p<q}\bigcup_{j=1}^{\lfloor (q+1)/2\rfloor}
\{(q+1, \xi, m_{2j}, I, \epsilon, e^{*}_\eta)\in \bar{\Delta}_{q+1}:      \xi\in \Delta_p,  \eta\in\Gamma_q\setminus\Gamma_p
\}
\end{align*}

Now we define the special nodes, i.e. the nodes compatible to the special sequences defined above (counterparts of special functionals in \cite{AM}).
We start with the notion of compatibility, which is defined recursively on $\age(\gamma)$.

\begin{definition}\label{defcompatible}
We say that a node $\gamma=(q+1,0,m_{2j-1},I,\epsilon,\esga[\eta])\in\bDelta_{q+1}$ is compatible with a $(\Gamma_q,j)$-special sequence $(\zeta_{1},\bar{x}_{1})$, where $\zeta_1=(q+1,0,m_{2j-1}, I, \epsilon_1, \esga[\eta_1])$, if $\eta\in\Gamma_q$ and $\eta,\eta_1$ have compatible tree-analyses. 

We say that a node $\gamma=(q+1,\xi,m_{2j-1}, I,\epsilon,\lambda\esga[\eta])\in\bDelta_{q+1}$ is compatible with a $(\Gamma_q,j)$-special sequence $(\zeta_{i},\bar{x}_{i})_{i=1}^{\age(\gamma)}$, where $\zeta_{\age(\gamma)}=(q+1,\zeta_{\age(\gamma)-1},m_{2j-1},I,\epsilon_{\age(\gamma)},\lambda_{\ageg}\esga[ \eta_{\ageg}])$, provided
\begin{enumerate}
\item $\eta,\xi\in\Gamma_q$,
 \item $\xi$ is compatible with the $(\Gamma_{\rank(\xi)},j)$-special sequence $(\zeta_{i},\bar{x}_{i})_{i=1}^{\age(\xi)}$ (recall that $\age(\gamma)=\age(\xi)+1$)
\item if $\age(\gamma)$ is odd then $\lambda=1(=\lambda_{\age(\gamma)})$ and $\eta, \eta_{\ageg}$ have compatible tree-analyses,
\item if $\age(\gamma)$ is even then $\epsilon=1$, $\eta=\eta_{\age(\gamma)}$ and $\lambda\in \Net_{1,q}$ is chosen to satisfy $\vert\lambda-\lambda_{\xi, \bar{x}_{\age(\xi)}}\vert<\sfrac{1}{4n_{\rank(\xi)}^{2}}$.
\end{enumerate}
\end{definition}
The set of special nodes is defined as
\begin{align}\label{defDeltaq}
\Delta_{q+1}^{sp}&=\bigcup_{j=1}^{\lfloor( q+1)/2\rfloor}
\{
\gamma=(q+1, 0, m_{2j-1}, I, \epsilon, e^{*}_{\eta})\in\bDelta_{q+1}: \gamma\, \textrm{is compatible with some }
\\
 &\hspace{8.5cm}(\Gamma_q,j)\textrm{-special sequence}\,\, (\zeta_{1},\bar{x}_{1}) \}
\notag
\\
&
\cup\bigcup\limits_{p=1}^{q} \bigcup\limits_{j=1}^{\lfloor (q+1)/2\rfloor} 
\{
\gamma=(q+1, \xi, m_{2j-1}, I, \epsilon, \lambda e^{*}_\eta)\in\bDelta_{q+1}: \gamma\,\, \textrm{is compatible with some } 
\notag 
\\
&\hspace{8.5cm}(\Gamma_q,j)\textrm{-special sequence}\,\, (\zeta_{i},\bar{x}_{i})_{i=1}^{\ageg}
\notag
\}.
\end{align}

Finally we set 
\begin{align*}
\Delta_{q+1}=\Delta_{q+1}^{reg}\cup\Delta_{q+1}^{sp} \text{ and }\Gamma_{q+1}=\Gamma_q\cup\Delta_{q+1}.
\end{align*}
Obviously $\Delta_q\subset\bDelta_q$ for any $q\in\N$. We set $\Gamma=\cup_q\Gamma_q$. Following \cite{AMo} we denote by $R$ the restriction on $\mathfrak{X}_{\bGamma}$ of the restriction operator 
$\ell_\infty(\bGamma)\to\ell_\infty(\Gamma)$ and for any $q\in\N$ we let $i_q:\ell_\infty(\Gamma_q)\to \ell_\infty(\Gamma)$ be defined by $i_q(x)=R(\bar{i}_q(x))$ for any $x$. Given any $q\in\N$ we let $M_q=i_{\max \Gamma_q}[\ell_\infty(\Gamma_q)]$. 

\begin{proposition}
The set $\Gamma$ is a self-determined subset of $\bGamma$, hence it defines a BD-$\mathscr{L}_\infty$-space $\mathfrak{X}_{(\Gamma_{q},i_{q})_{q}}$. 

Moreover, the restriction $R:\mathfrak{X}_{\bGamma}\to\mathfrak{X}_{(\Gamma_q,i_q)_q}$ is a well-defined operator of norm at most 1 inducing the isomorphism between $\mathfrak{X}_{(\Gamma_{q},i_{q})_{q}}$ and $\mathfrak{X}_{\bGamma}/Y$, where $Y=\overline{\langle\bar{d}_\gamma: \gamma\in\bGamma\setminus\Gamma\rangle}$. 
\end{proposition}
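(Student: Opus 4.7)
The goal is to verify the self-determination condition $\langle \bar{d}^*_\gamma : \gamma \in \Gamma\rangle = \langle e^*_\gamma : \gamma \in \Gamma\rangle$; once this is in place, both the existence of the BD-$\mathscr{L}_\infty$-space $\mathfrak{X}_{(\Gamma_q, i_q)_q}$ and the description of $R$ as the quotient map will follow directly from the general framework of \cite{AMo}. Since $\bar{d}^*_\gamma = e^*_\gamma - \bar{c}^*_\gamma$, both inclusions reduce to the single statement that for every $\gamma \in \Gamma$, the functional $\bar{c}^*_\gamma$ lies in $\langle \bar{d}^*_\zeta : \zeta \in \Gamma\rangle$, which I would prove by induction on $\rank(\gamma)$.

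The key auxiliary claim, proved as a side induction, is that for every $\eta \in \Gamma$ and every interval $I \subset \N$, the functional $\bar{P}^*_I e^*_\eta$ already lies in $\langle \bar{d}^*_\zeta : \zeta \in \Gamma\rangle$. The base case $\eta = 1$ is immediate. For the inductive step I would take the evaluation analysis $e^*_\eta = \sum_r \bar{d}^*_{\xi_r} + m_k^{-1} \sum_r \epsilon_r \lambda_r \bar{P}^*_{J_r} e^*_{\eta_r}$ and apply $\bar{P}^*_I$ termwise, using $\bar{P}^*_I \bar{P}^*_{J_r} = \bar{P}^*_{I \cap J_r}$. Each term $\bar{P}^*_I \bar{d}^*_{\xi_r}$ equals $0$ or $\bar{d}^*_{\xi_r}$, while $\bar{P}^*_{I \cap J_r} e^*_{\eta_r}$ falls under the induction hypothesis since $\rank(\eta_r) < \rank(\eta)$—provided $\xi_r, \eta_r \in \Gamma$. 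This closure property is the heart of the argument: one must verify that along the entire ancestor-chain determined by the age and among the accompanying nodes $\eta_r$, every entry lies in $\Gamma$. For regular nodes this is built into the definition of $\Delta_{q+1}^{reg}$ (the conditions $\xi \in \Delta_p$ and $\eta \in \Gamma_q$); for special nodes it follows by unwinding Definition \ref{defcompatible} together with the definition of $(\Gamma, j)$-special sequences, which propagates the membership requirement down every initial segment. I expect this bookkeeping to be the main (if routine) obstacle, since the freedom allowed in the compatibility conditions must still force the analysis to stay inside $\Gamma$.

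To close the main induction, suppose $\gamma = (q+1, \xi, m_j, I_\gamma, \epsilon, \lambda e^*_\eta) \in \Gamma$ (the age-$1$ case is strictly simpler, with no $e^*_\xi$ term). Then $\bar{c}^*_\gamma = e^*_\xi + \frac{\epsilon \lambda}{m_j} \bar{P}^*_{I_\gamma} e^*_\eta$. The second summand lies in $\langle \bar{d}^*_\zeta : \zeta \in \Gamma\rangle$ by the auxiliary claim (since $\eta \in \Gamma$), and the first lies there by applying the outer induction at rank $\rank(\xi) < \rank(\gamma)$: inductively $\bar{c}^*_\xi$ is in the span, hence so is $e^*_\xi = \bar{d}^*_\xi + \bar{c}^*_\xi$. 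This completes the proof that $\Gamma$ is self-determined.

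Finally, the ``moreover'' part is then immediate from \cite{AMo}: self-determination guarantees that $R$, viewed as the restriction of the standard map $\ell_\infty(\bGamma) \to \ell_\infty(\Gamma)$, takes $\mathfrak{X}_{\bGamma}$ into the space $\mathfrak{X}_{(\Gamma_q, i_q)_q}$ built from the induced extension operators $(i_q)_q$; the norm bound $\norm[R] \leq 1$ is inherited from the $\ell_\infty$-restriction. The equality $\ker R = Y$ follows from biorthogonality of $(\bar{d}^*_\gamma)$ and $(\bar{d}_\gamma)$ together with Lemma \ref{lemma1}, and the induced map $\mathfrak{X}_{\bGamma}/Y \to \mathfrak{X}_{(\Gamma_q, i_q)_q}$ is the claimed isomorphism.
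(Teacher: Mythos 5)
Your argument is correct in substance but takes a genuinely different, more self-contained route than the paper. The paper's proof is essentially two lines: it invokes Proposition 1.5 of \cite{AMo}, which reduces self-determination to checking that for every $\gamma\in\Delta_{q+1}$ the functional $\bar{c}^*_\gamma$ is a linear combination of functionals $e^*_\eta\circ \bar{P}_E$ with $\eta\in\Gamma_q$; since $\bar{c}^*_\gamma=e^*_\xi+\frac{\epsilon\lambda}{m_j}\,e^*_\eta\circ \bar{P}_I$ and the definitions of $\Delta^{reg}_{q+1}$ and $\Delta^{sp}_{q+1}$ explicitly require $\xi\in\Delta_p$ and $\eta\in\Gamma_q$, the hypothesis is immediate. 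What you do instead is reprove the relevant special case of that proposition by hand: your double induction on rank, with the auxiliary claim that $\bar{P}^*_Ie^*_\eta\in\langle\bar{d}^*_\zeta:\zeta\in\Gamma\rangle$ for $\eta\in\Gamma$, is exactly the mechanism hidden inside the cited result, and the ``closure property'' you single out as the main obstacle is precisely what the paper records (without proof) in the remark following the definition of $\Gamma$, namely that the tree-analysis of any $\eta\in\Gamma$ stays inside $\Gamma$. Your route buys independence from \cite{AMo}; the paper's buys brevity at the cost of an external citation, and likewise disposes of the ``moreover'' part by citing Proposition 1.9 of \cite{AMo} rather than identifying $\ker R=Y$ directly as you do. Two small points you gloss over, neither a genuine gap: the reduction of \emph{both} inclusions to $\bar{c}^*_\gamma\in\langle\bar{d}^*_\zeta:\zeta\in\Gamma\rangle$ needs one more line (induction on rank using that $\bar{c}^*_\gamma$ is supported on ranks below $\rank(\gamma)$, or a dimension count on each $\Gamma_q$) to recover $\bar{d}^*_\gamma\in\langle e^*_\zeta:\zeta\in\Gamma\rangle$; and the closure property for special nodes does deserve the explicit unwinding of Definition \ref{defcompatible} that you promise but do not carry out.
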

\begin{proof}
According to Proposition 1.5~\cite{AMo} it is enough to show that for every $\gamma\in\Delta_{q+1}$ the following holds
$$
\bar{c}^*_\gamma\in \{\esga\circ P_{E}: \gamma\in\Gamma_{q}, E\subset\N\cup\{0\}\}
$$
This follows readily from the definition of $\bar{c}^*_\gamma$, see \eqref{cgamma1}, using that $\bar{d}^*_\gamma=\esga\circ P_{\{\rank(\gamma)\}}$.

The second part of Proposition follows by Proposition 1.9 \cite{AMo}.
\end{proof}
\begin{definition}
 We let $\mathfrak{X}_{Kus}=\mathfrak{X}_{(\Gamma_q,i_q)_q}$.
\end{definition}
In the sequel we shall use the casual notation, $c^{*}_{\gamma}, d^{*}_{\gamma},d_{\gamma}$ etc for the objects in the space  $X_{Kus}$. We shall use also notation $P_I$ for the projections onto $\langle d_\gamma: \ \gamma\in I\rangle$,  notice here that we can consider $I$ to be an interval in $\Gamma$ instead of $\bGamma$. 
Henceforth, by $(\gamma_n)_n$ we shall denote the enumeration of $\Gamma$ instead of the one of $\overline \Gamma$.

\begin{remark}
Notice that all the results from Section 3 are valid also for the basis $(d_\gamma)_{\gamma\in\Gamma}$ of the space $\mathfrak{X}_{Kus}$, as $d_\gamma=R\bar{d}_\gamma$, $R^{*}e^{*}_{\gamma}=e^{*}_{\gamma}$, $\gamma\in\Gamma$ by Remark 1.11 \cite{AMo} and $\norm[R]=1$. 
\end{remark}

By Proposition 1.13 \cite{AMo} we can use the analysis of nodes introduced in Section 2 in the space $\mathfrak{X}_{Kus}$. We write now the precise form of each $e^{*}_\gamma$ depending on the type of the node $\gamma\in\Gamma$.

From now on, unless specified otherwise, each node $\gamma$ shall be assumed to be in $\Gamma$.
\begin{remark}
Let a node $\gamma$ have evaluation analysis $(I_i,\epsilon_i, e_{\eta_i}^{*},\xi_i)_{i=1}^{a}$. Then 
\begin{enumerate}
\item if $\w(\gamma)=m_{2j}^{-1}$ then 
\[
e_\gamma^{*}=\sum_{i=1}^{a} d_{\xi_i}^{*}+\frac{1}{m_{2j}}\sum_{i=1}^{a}\epsilon_i P^{*}_{I_i}e_{\eta_i}^{*} ,
\] 
\item if $\w(\gamma)=m_{2j-1}^{-1}$, then 
\[
 e_\gamma^{*}=\sum_{i=1}^{a} d_{\xi_i}^{*}+\frac{1}{m_{2j-1}}\left(\sum_{i=1}^{\lfloor a/2\rfloor}(\epsilon_{2i-1}P^{*}_{I_{2i-1}}e_{\eta_{2i-1}}^{*} +\lambda_{2i}P^{*}_{I_{2i}}e^{*}_{\eta_{2i}} )+ [\epsilon_a P^{*}_{I_{a}}e^{*}_{\eta_a} ]\right),
\] 
where the last term in the square brackets appears if $a\in 2\N+1$, and with each $e^{*}_{\eta_{2i}}$ having the mt-part of the following form
$$
\mt(e^{*}_{\eta_{2i}})=\w(\eta_{2i})\sum_k P^{*}_{\Delta_{\rank(\beta_{k,i})}} e^{*}_{\beta_{k,i}} =\w(\eta_{2i})\sum_k d^{*}_{\beta_{k,i}}.
$$
\end{enumerate}
\end{remark}

Now we make some comments concerning the possible modification of the mt-part of a functional.

\begin{remark}\label{mt-part}
\begin{enumerate}
 
\item Fix $(\eta_s)_{s=1,\dots,a}$, with $a\leq n_{2j}$, $2j\leq q_{1}$, $\eta_s\in\Gamma_{q_s}\setminus \Gamma_{p_s}$, $s=1,\dots,a$, $p_1<q_1<\dots <p_{a}<q_{a}$, and $(I_s)_{s=1}^a$ with $I_s\subset\Gamma_{q_s}\setminus\Gamma_{p_s}$, $P^{*}_{I_{s}}e^{*}_{\eta_{s}}\ne 0$ and $(\epsilon_s)_{s=1,\dots, a}\subset \{\pm 1\}$.

Then the formulas $\xi_{1}=(q_{1}+1,0,m_{2j},I_1, \epsilon_{1}, e^{*}_{\eta_{1}})$ and $\xi_{s}=(q_{s}+1,\xi_{s-1},m_{2j},I_{s}, \epsilon_{s},e^{*}_{\eta_{s}})$ for any $s\leq a$ give well-defined regular nodes.

It follows that for any functional $e^{*}_\gamma$ given by a regular node $\gamma$ with 
$$
\mt(e^{*}_\gamma)=\frac{1}{m_{2j}}\sum_{i=1}^a\epsilon_ie^{*}_{\eta_i}P_{I_i}
$$
and any $(\tilde{\epsilon}_i)_{i\leq a}\subset\{\pm 1\}$ and any $(\tilde{\eta}_i)_{i\leq a}$ with $\rank (\tilde{\eta}_i)=\rank (\eta_i)$ and $P^{*}_{I_{i}}e^{*}_{\tilde{\eta}_{i}}\ne 0$ there is a regular node $\tilde{\gamma}$ with 
$$
\mt(e^{*}_{\tilde{\gamma}})=\frac{1}{m_{2j}}\sum_{i=1}^a\tilde{\epsilon}_ie^{*}_ { \tilde{\eta}_i}P_{I_i} \ \ \text{ and }\ \ \rank(\tilde{\gamma})=\rank(\gamma).
$$

\item Take a functional $e^{*}_{\gamma}$ where $\gamma$ is compatible with a $j$-special sequence, with 
\[
\mt(e_\gamma^{*})=\frac{1}{m_{2j-1}}\left(\sum_{i=1}^{\lfloor a/2\rfloor}(\epsilon_{2i-1}e_{\eta_{2-i}}^{*} P_{I_{2i-1}}+\lambda_{2i}e^{*}_{\eta_{2i}} P_{I_{2i}}) + [\epsilon_a e^{*}_{\eta_a} P_{I_a}]\right),
\] 
evaluation analysis $(I_i,\epsilon_i, e_{\eta_i}^{*},\xi_i)_{i=1}^{a}$ and weight $\w(\gamma)=m_{2j-1}^{-1}$. Let $(\tilde{\epsilon}_i)_{i=1}^a$ and $(\tilde{\eta}_i)_{i=1}^a$ satisfy the following: 
 \begin{enumerate}
\item[(i)]if $i$ is even then $\tilde{\epsilon}_i=1$, $\tilde{\eta}_i=\eta_i$,
\item[(ii)]if $i$ is odd then $\tilde{\eta_{i}}$ has compatible tree-analysis with $\eta_{i}$.
\end{enumerate}
Then the formulas $\tilde{\xi}_1=(q_1+1,0,m_{2j-1},I_1,\tilde{\epsilon}_1,\tilde{\lambda}_1e^*_{\tilde{\eta}_1})$ and $\tilde{\xi}_{i}=(q_i+1,\tilde{\xi}_{i-1},m_{2j-1},I_{i},\tilde{\epsilon}_{i}, \tilde{\lambda}_ie^*_{\tilde {\eta}_{i}})$, $i\leq a$, give well-defined special nodes. Indeed, it follows from directly  applying the definition of a special node that there exists a node $e^*_{\tilde \gamma}$ with 
\[ 
\mt(e_{\tilde{\gamma}}^{*})=\frac{1}{m_{2j-1}}\left(\sum_{i=1}^{\lfloor a/2\rfloor}(\tilde{\epsilon}_{2i-1}e_{\tilde{\eta}_{2-i}}^{*}P_{I_{2i-1}}+ \tilde{\lambda}_{2i}e^{*}_{\tilde{\eta}_{2i}}P_{I_{2i}}) + [\tilde\epsilon_a e^{*}_{\tilde\eta_a} P_{I_a}]\right) 
\]
where the $\tilde{\lambda}_{2i}$ are chosen to satisfy Definition \ref{defcompatible} (4), that is comparable with $\esga$ and hence it is a special node with the same rank as $\esga$.
\end{enumerate}
\end{remark}

\begin{remark}
Notice that by the definition of $\Gamma$, for any $\gamma\in\Gamma$ with a tree-analysis $(I_t,\epsilon_t,\eta_t)_{t\in\T}$ we have $\eta_t\in\Gamma$ and $\supp\bdp(\eta_t)\subset\Gamma$ for any $t\in\T$. 
\end{remark}

\section{Rapidly Increasing Sequences}
From now on we shall work in the space $\mathfrak{X}_{Kus}$. In this section we introduce the basic canonical tool, i.e. Rapidly Increasing Sequences and state their properties, in particular the fundamental property of Bourgain-Delbaen spaces in the Argyros-Haydon setting that allows to pass from strictly singular operators to compact ones. As the proofs of all the results stated here follows directly the reasoning of \cite{AH}, we do not present them here. 

Recall that skipped block sequences are defined with respect to the FDD 
$(M_{q})_{q\in\N}$.

\begin{definition}
Let $I$ be an interval in $\N$ and $(x_{k})_{k\in I}\subset\mathfrak{X}_{Kus}$ be a skipped block sequence. We shall say that $(x_{k})_{k\in I}$ is a Rapidly Increasing Sequence with constant $C>0$ ($C$-RIS) if there exists an increasing sequence $(j_{k})_{n\in I}\subset\N$ such that
 \begin{enumerate}
 \item $\norm[x_{k}]\leq C$ for all $k\in I$,
 \item $\rng_{FDD} x_{k}<j_{k+1}$,
 \item $|x_{k}(\gamma)|\leq Cm_{i}^{-1}$ for all $\gamma$ with $\w(\gamma)=m_{i}^{-1}$ and $i<j_{k}$.
 \end{enumerate}
\end{definition}

\begin{lemma}[Proposition 5.6 \cite{AH}]\label{p56}
Let $(x_{k})_{k=1}^{n_{j_{0}}}$ be a $C$-RIS and $s\in\N$.

a) If $\gamma\in\Gamma$ and $\w(\gamma)=m_{i}^{-1}$ then
\begin{equation}\label{bw}
\abs{e^{*}_{\gamma}P_{(s,+\infty)}\left(\frac{1}{n_{j_{0}}}\sum_{k=1}^{n_{j_{0}}
} x_{k}\right) }
 \leq
 \begin{cases}
16Cm_{i}^{-1}m_{j_{0}}^{-1}\,\,\,&\textrm{if}\,\,i<j_{0}\\
5Cn_{j_{0}}^{-1}+6Cm_{i}^{-1} &\textrm{if}\,\, i\geq j_{0}\,.
\end{cases}
\end{equation}
In particular for $i>j_{0}$ we have
\begin{equation}\label{p56a}
 \abs{e^{*}_{\gamma}\left(\frac{1}{n_{j_{0}}}\sum_{k=1}^{n_{j_{0}}}x_{k}\right)} \leq 10Cm_{j_{0}}^{-2}
\end{equation}
and also
\begin{equation}\label{p56b}
\norm[ \frac{1}{n_{j_{0}}} \sum_{k=1}^{n_{j_{0}}}x_{k}] \leq 10Cm_{j_{0}}^{-1}.
\end{equation}

b) If $\lambda_{k}$, $1\leq k\leq n_{j_0}$ are scalars with $\vert\lambda_{k}\vert\leq 1$, satisfying the property
$$
\vert \esga\left(\sum_{k\in J}\lambda_{k}x_{k}\right)\vert\leq C\max_{k\in J}\vert\lambda_{k}\vert
$$
for every $\gamma\in\Gamma$ with $\w(\gamma)=m_{j_{0}}^{-1}$ and every interval $J\subset\{1,\dots,n_{j_0}\}$ then we have
$$
\norm[\frac{1}{n_{j_0}}\sum_{k=1}^{n_{j_{0}}}\lambda_{k}x_{k}]\leq \frac{10C}{m_{j_{0}}^{2}}.
$$
\end{lemma}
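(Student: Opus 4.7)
The plan is to follow line-by-line the argument of Argyros-Haydon [AH, Proposition 5.6], which requires only cosmetic modification since our construction permits projections on arbitrary intervals $I$ rather than only on right half-lines. Fix $\gamma$ with $\w(\gamma)=m_i^{-1}$ and evaluation analysis $(I_r,\epsilon_r,\lambda_r e^{*}_{\eta_r},\xi_r)_{r=1}^{a}$, $a\le n_i$, so that
\[
e^{*}_\gamma P_{(s,+\infty)} = \sum_{r=1}^{a} d^{*}_{\xi_r}P_{(s,+\infty)} + \frac{1}{m_i}\sum_{r=1}^{a}\epsilon_r\lambda_r\, e^{*}_{\eta_r}P_{I_r\cap(s,+\infty)}.
\]
Setting $y=n_{j_0}^{-1}\sum_{k=1}^{n_{j_0}}x_k$, I bound the bd-part using the FDD-skipped property of $(x_k)_k$: each $d^{*}_{\xi_r}$ pairs nontrivially with at most one $x_k$, so the bd-part contributes $O(Cn_i/n_{j_0})$, which by \eqref{mklk} is absorbed into the advertised constants.

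For the mt-part, for each $r$ I classify the indices $k$ into three groups: those with $\rng_{FDD} x_k\subset I_r\cap(s,+\infty)$, those whose $\rng_{FDD} x_k$ straddles a boundary of $I_r\cap(s,+\infty)$ (at most two per $r$, hence at most $2n_i$ in total, contributing a boundary error of order $Cn_i/(m_i n_{j_0})$), and those disjoint from $I_r$. For the ``fully inside'' indices two sub-cases arise. If $i<j_0$, then either $\w(\eta_r)=m_l^{-1}$ with $l<j_k$, in which case property~(3) of the RIS definition gives $|e^{*}_{\eta_r}(x_k)|\le Cm_l^{-1}$, or $l\ge j_k$, in which case I iterate the same estimate on $e^{*}_{\eta_r}$ via its own evaluation analysis; organizing the recursion exactly as in the proof of Lemma~\ref{basisest} produces auxiliary functionals in the norming set of $T[(\mathcal{A}_{n_k},m_k^{-1})_k]$ acting on the constant-coefficient vector $n_{j_0}^{-1}\mathbf{1}$, and the standard mixed Tsirelson estimate then yields a bound of order $m_i^{-1}m_{j_0}^{-1}$, giving the factor $16Cm_i^{-1}m_{j_0}^{-1}$. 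If instead $i\ge j_0$, I use $|e^{*}_{\eta_r}(x_k)|\le\|x_k\|\le C$ for the finitely many $k$ with $j_k\le i$ and property~(3) for the rest, producing the $5Cn_{j_0}^{-1}+6Cm_i^{-1}$ bound. The special cases \eqref{p56a} and \eqref{p56b} then follow by taking $s=0$ and absorbing $5C/n_{j_0}$ into $6C/m_{j_0}^2$ via \eqref{mklk}, which forces $n_{j_0}\gg m_{j_0}^{2}$.

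For part (b), I take a norming functional $e^{*}_\gamma$ for $z=n_{j_0}^{-1}\sum_k\lambda_k x_k$ with $\w(\gamma)=m_i^{-1}$. When $i\ne j_0$, part (a) applied to the rescaled sequence $(\lambda_k x_k)_k$, still a $C$-RIS up to constant, immediately gives $|e^{*}_\gamma(z)|\le 10Cm_{j_0}^{-2}$. The delicate case $i=j_0$ is precisely the one the hypothesis on the $\lambda_k$'s is crafted to handle: running the decomposition of the mt-part of $e^{*}_\gamma$ above, every time the tree-analysis produces a subfunctional of weight $m_{j_0}^{-1}$ acting on a contiguous block of $(\lambda_k x_k)_{k\in J}$ one invokes the hypothesis to replace the naive bound $C\max_k|\lambda_k|$, and carries this gain through the recursion to again produce $10Cm_{j_0}^{-2}$. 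The principal technical obstacle throughout is not conceptual but the careful bookkeeping of boundary errors and constants through the recursive splittings of the tree-analysis; the arbitrary-interval feature of our construction only changes the number of boundary indices per $I_r$ from one (as in [AH]) to two, which is absorbed by the same arguments.
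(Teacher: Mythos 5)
The paper offers no proof of this lemma: it is quoted as Proposition 5.6 of \cite{AH}, with the blanket remark at the start of Section 5 that the proofs of all results stated there follow directly the reasoning of \cite{AH}. So your strategy of transplanting the Argyros--Haydon argument, the only new feature being that interval projections now have two endpoints instead of one, is exactly what the paper intends, and the skeleton of your outline (recursive splitting of the evaluation/tree-analysis, reduction to an auxiliary functional in the norming set of $T[(\mathcal{A}_{n_k},m_k^{-1})_k]$ acting on $n_{j_0}^{-1}\sum_k e_k$, crude treatment of the boundedly many straddling indices per level) is faithful to that argument.

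Two points in your paraphrase would not survive being written out, however. First, the bound $O(Cn_i/n_{j_0})$ for the bd-part is absorbed via \eqref{mklk} only when $i<j_0$; for $i\ge j_0$ one has $n_i\ge n_{j_0}$ and the bound is vacuous. In the argument of \cite{AH} the bd-part and the straddling terms are charged so that each relevant $x_k$ contributes $O(C)$ in total (not the number of $\xi_r$'s falling in its range times $O(C)$), and in fact essentially only one index $k_1$ is charged; this is the source of the $5Cn_{j_0}^{-1}$ term in the statement. Second, in part (b) the case $i\ne j_0$ is not disposed of by part (a): for $i<j_0$ part (a) only yields $16Cm_i^{-1}m_{j_0}^{-1}$, which is far larger than $10Cm_{j_0}^{-2}$. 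The hypothesis on the $\lambda_k$'s must be invoked at every node of weight $m_{j_0}^{-1}$ occurring anywhere in the tree-analysis, whatever the weight of the root; this is what allows one to build an auxiliary functional avoiding the weight $m_{j_0}^{-1}$ and then to apply the corresponding auxiliary-space estimate, which is where the extra factor $m_{j_0}^{-1}$ comes from. Neither issue is a flaw of the strategy --- both are handled correctly in \cite{AH} --- but as stated your sketch does not yet reproduce the claimed constants.
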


The following result is proved in a manner similar to how Lemma \ref{basisest} is proved.

\begin{corollary}\label{diffw} 
Let $i<j\in\N$, $(x_{k})_{k=1}^{n_{j}}$ be a $C$-RIS, $x=\frac{m_j}{n_j}\sum_{k=1}^{n_j}x_k$ and $(\esga[\eta_{p}])_{p=1}^{n_{i}}$ be nodes such that $\w(\esga[\eta_{p}])=m_{l_p}^{-1}$ and $m_{l_{p}}\ne m_{j}$, $m_{l_{p}}<m_{l_{p+1}}$ for all $p\leq n_{i}$. Then for every choice of intervals $I_{p}$, $p\leq n_{i}$, we have
\begin{equation}
\label{ediffw} 
\sum_{p=1}^{n_{i}}|e^{*}_{\eta_{p}}(P_{I_{p}}x)|\leq 64C/m_{p_{1}}.
\end{equation}
\end{corollary}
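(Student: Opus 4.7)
The strategy is to mirror the proof of Corollary~\ref{diffwb}, replacing the basis estimates of Lemma~\ref{basisest} by the RIS estimates of Lemma~\ref{p56}(a). The only structural difference is that Lemma~\ref{p56} delivers bounds for \emph{tail} projections $P_{(s,+\infty)}$ rather than for arbitrary interval projections $P_{I_p}$. I would therefore first write each $I_p = (s_p,t_p]$ and use $P_{I_p}=P_{(s_p,\infty)}-P_{(t_p,\infty)}$, paying a factor of $2$ in the estimates. I would then apply Lemma~\ref{p56}(a) to the scaled average $\frac{1}{n_j}\sum_{k=1}^{n_j}x_k = \frac{1}{m_j}x$, and multiply the resulting bounds by $m_j$.

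Next I would partition the indices into $P_{<}=\{p:l_p<j\}$ and $P_{>}=\{p:l_p>j\}$ (equality is excluded by the hypothesis $m_{l_p}\neq m_j$). For $p\in P_{<}$, the first branch of~\eqref{bw} yields
\[
|e^{*}_{\eta_p}(P_{I_p}x)|\;\leq\;2\cdot m_j\cdot\frac{16C}{m_{l_p}m_j}\;=\;\frac{32C}{m_{l_p}}.
\]
Since the $(l_p)_p$ are strictly increasing and the growth condition $m_{k}m_{k-1}\leq m_1^{l_k}$ in~\eqref{mklk} forces $(m_k)_k$ to grow at least geometrically, one has $\sum_{p\in P_{<}}1/m_{l_p}\leq 2/m_{l_1}$. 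This yields a contribution of at most $64C/m_{l_1}$, which is precisely the target bound.

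For $p\in P_{>}$, the second branch of~\eqref{bw} yields
\[
|e^{*}_{\eta_p}(P_{I_p}x)|\;\leq\;2m_j\!\left(\frac{5C}{n_j}+\frac{6C}{m_{l_p}}\right)\;=\;\frac{10Cm_j}{n_j}+\frac{12Cm_j}{m_{l_p}}.
\]
The first summand, taken over at most $n_i$ indices, contributes at most $10Cn_im_j/n_j$; using $i<j$ and $(n_{j-1}/m_{j-1})^{l_j}\leq n_j/(m_{j-1}m_j)$ from~\eqref{mklk}, together with $l_1<j$ and hence $m_{l_1}\leq m_{j-1}$, this is easily absorbed as a lower-order term relative to $1/m_{l_1}$. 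The second summand, taken over strictly increasing $l_p>j$, collapses to a geometric sum bounded by $24Cm_j/m_{j+1}$, again dwarfed by $1/m_{l_1}$ thanks to~\eqref{mklk}.

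The main (and only) obstacle is bookkeeping: verifying that the two RIS-specific error terms (the tail $1/n_j$ in~\eqref{bw} and the explicit $m_j$ scaling of $x$) are absorbed inside $64C/m_{l_1}$. This is exactly what~\eqref{mklk} was designed for, so no new idea beyond those already used in Corollary~\ref{diffwb} should be required.
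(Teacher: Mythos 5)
Your proof is correct and is essentially the paper's intended argument: the paper only remarks that this corollary ``is proved in a manner similar to'' Lemma \ref{basisest}, the concrete template being the proof of Corollary \ref{diffwb}, and you reproduce that summation with Lemma \ref{p56}(a) as the input estimate and the decomposition $P_{I_p}=P_{(s_p,\infty)}-P_{(t_p,\infty)}$ accounting exactly for the constant $64C=2\cdot 2\cdot 16C$. One small correction: the inequality $m_k m_{k-1}\le m_1^{l_k}$ in \eqref{mklk} is an \emph{upper} bound on $m_k$ and does not force geometric growth; the bound $\sum_p m_{l_p}^{-1}\le 2 m_{l_1}^{-1}$ (and the slack needed to absorb the $P_>$ terms after the $P_<$ terms) is an implicit assumption on the parameters, satisfied by the standard choice $m_k=2^{2^k}$ and used in exactly the same unstated way in the paper's own proof of Corollary \ref{diffwb}.
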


\begin{lemma}[Corollary 8.5 \cite{AH}]\label{ris-exists}
For every block subspace $Y\subset \mathfrak{X}_{Kus}$, $C>2$ and every interval $J\subset \N$ there exists a normalized $C$-RIS $(x_k)_{k\in J}$ in $Y$. Moreover, for any $\varepsilon>0$ and $C>2$ the sequence $(x_k)_{k\in J}$ can be chosen to satisfy $|d^{*}_\gamma(x_k)|<\varepsilon$ for any $k\in J$ and $\gamma\in \Gamma$.

\end{lemma}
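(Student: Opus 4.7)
The plan is to follow the standard Argyros-Haydon strategy: produce each $x_k$ as a normalised $\ell_1^{n_{j_k}}$-average of a long FDD-skipped block sequence inside $Y$, with the averaging lengths $n_{j_k}$ strictly increasing. The first ingredient is the analogue of \cite[Lemma 8.2]{AH}: for every block subspace $Y\subset\mathfrak{X}_{Kus}$, every $j\in\N$ and every $\delta>0$ there is a vector $y=n_j^{-1}\sum_{l=1}^{n_j}z_l\in Y$ with $(z_l)$ FDD-skipped in $Y$, $\norm[z_l]\le 1+\delta$ and $\norm[y]\ge 1-\delta$. This is the usual combinatorial dichotomy, which rests only on upper $\ell_1$-estimates of skipped averages; such estimates are available in $\mathfrak{X}_{Kus}$ because regular nodes of every even weight $m_{2k}^{-1}$ with any admissible skipped tuple of successors lie in $\Gamma$.

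The second ingredient is that an $\ell_1^{n_j}$-average automatically satisfies the third RIS condition: if $y=n_j^{-1}\sum_l z_l$ with $\norm[z_l]\le 2$ and $\gamma\in\Gamma$ has $\w(\gamma)=m_i^{-1}$ with $i<j$, then $|e^*_\gamma(y)|\le C_0 m_i^{-1}$ for an absolute constant $C_0$. This is proved by repeating the tree-splitting argument from Lemma \ref{basisest}, applied to $y$ in place of $n_j^{-1}\sum_l d_{\xi_l}$: the non-neighbour property of the $\xi_l$'s is replaced by the FDD-skipped property of the $z_l$'s, and the pointwise bound $|e^*_\eta(d_\beta)|\le 2$ is replaced by $|e^*_\eta P_I(z_l)|\le 2\norm[z_l]$; only absolute constants change.

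With these two ingredients, enumerate $J=\{k_1<k_2<\dots\}$ and build $(x_{k_m})$ by induction. Given $x_{k_1},\dots,x_{k_{m-1}}$ with maximum FDD-range $q$, fix $j_{k_m}$ larger than both $j_{k_{m-1}}$ and $4/\varepsilon$, and apply the first ingredient inside the cofinite FDD-tail $Y\cap\overline{\langle M_r:r>q+1\rangle}$ to obtain a normalised $\ell_1^{n_{j_{k_m}}}$-average $x_{k_m}$; then choose $j_{k_{m+1}}$ so large that $\max\rng_{FDD} x_{k_m}<j_{k_{m+1}}$. By the second ingredient, $(x_k)_{k\in J}$ is a $C$-RIS for any prescribed $C>2$ provided $\delta$ in the first ingredient is sufficiently small. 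For the moreover part note that, since the blocks $z_l$ are FDD-skipped they have disjoint $d_\gamma$-supports, so at most one term contributes to $d^*_\gamma(x_{k_m})$; together with $\norm[d^*_\gamma]\le 4$ (from $\norm[P_n]\le 2$) this yields $|d^*_\gamma(x_{k_m})|\le 4(1+\delta)/n_{j_{k_m}}<\varepsilon$.

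The main obstacle is the first ingredient, namely the extraction of long $\ell_1$-averages in the restricted norming set of $\mathfrak{X}_{Kus}$. In \cite{AH} this rests on the mixed-Tsirelson structure of the norming set; in our case the norming set is more restricted ($b^*$ only of the form $\epsilon\lambda e^*_\eta$, and only regular/special nodes), but regular nodes with weight $m_{2k}^{-1}$ and up to $n_{2k}$ successors still produce functionals of the form $m_{2k}^{-1}\sum_r\epsilon_r e^*_{\eta_r}P_{I_r}$ over arbitrary admissible skipped tuples, which is precisely what the dichotomy requires.
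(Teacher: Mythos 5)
Your plan is correct and coincides with the route the paper takes: the paper offers no proof of this lemma, deferring entirely to the machinery of Lemmas 8.2--8.4 and Corollary 8.5 of \cite{AH}, and your two ingredients (existence of long $\ell_1^{n_j}$-averages in every block subspace via the James-type iteration fed by the lower estimate of Lemma \ref{normnode}(a), plus the automatic weight estimate $|e^{*}_\gamma(y)|\lesssim m_i^{-1}$ for such averages) are exactly that machinery, with the only $\mathfrak{X}_{Kus}$-specific point --- that regular nodes of every even weight over arbitrary admissible skipped tuples survive the passage from $\bGamma$ to $\Gamma$, so Lemma \ref{normnode}(a) is available --- correctly identified. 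One cosmetic remark: the dichotomy is fed by a \emph{lower} $\ell_1$-estimate for long skipped sums (the even-weight norming functionals give lower bounds on norms of sums), not an ``upper'' one as you write, though your justification via the regular nodes shows you mean the right thing.
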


Notice that if $x\in \oplus_{n=1}^q M_n$ with $q$ minimal then there exists a unique $u\in \ell_\infty(\Gamma_q)$ such that $i_q(u)=x$. The local support of $x$ is defined to be the set $\{\gamma\in\Gamma_q\mid u(\gamma)\neq 0\}$. Next 
results are again quoted from \cite{AH}.

\begin{lemma}[Lemma 5.8 \cite{AH}]\label{LocSuppWeight}
Let $\gamma\in \Gamma$ be of weight $m_h^{-1}$ and assume that $\w(\xi) \ne m_{h}^{-1}$ for all $\xi$ in the local support of $x$. Then $|x(\gamma)|\le 4m_h^{-1}\|x\|.$
\end{lemma}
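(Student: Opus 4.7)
The strategy is to apply the evaluation analysis of $e^*_\gamma$ and exploit the fact that every node $\xi_i$ appearing in it shares the weight $m_h^{-1}$, hence is excluded from the local support of $x$ by hypothesis. First, the case $\gamma \in \Gamma_q$ is immediate: since $x = i_q(u)$ and $i_q$ is an extension, $e^*_\gamma(x) = u(\gamma)$, which vanishes because $w(\gamma) = m_h^{-1}$. So assume $\rank(\gamma) > q$ and let $(I_i, \epsilon_i, \lambda_i e^*_{\eta_i}, \xi_i)_{i=1}^a$ be the evaluation analysis of $e^*_\gamma$. All $\xi_i$ have weight $m_h^{-1}$ with strictly increasing ranks, so one may let $i_0 \in \{0, 1, \ldots, a\}$ be the largest index with $\rank(\xi_i) \le q$, and then split
$$
e^*_\gamma(x) = \sum_{i=1}^a d^*_{\xi_i}(x) + m_h^{-1}\sum_{i=1}^a \epsilon_i \lambda_i e^*_{\eta_i}(P_{I_i} x).
$$

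The plan is now to cancel pairwise the contributions from indices $i \le i_0$. For such $i$, $\xi_i$ lies in $\Gamma_q$ with $w(\xi_i) = m_h^{-1}$, so the hypothesis forces $e^*_{\xi_i}(x) = 0$; telescoping the identity $e^*_{\xi_i} = d^*_{\xi_i} + c^*_{\xi_i} = d^*_{\xi_i} + e^*_{\xi_{i-1}} + m_h^{-1} \epsilon_i \lambda_i P^*_{I_i} e^*_{\eta_i}$ (with the convention $e^*_{\xi_0} := 0$ covering the base case where $c^*_{\xi_1}$ lacks the $e^*_{\xi_0}$ term) then yields $d^*_{\xi_i}(x) = -m_h^{-1} \epsilon_i \lambda_i e^*_{\eta_i}(P_{I_i} x)$ for every $i \le i_0$. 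Meanwhile, for $i > i_0$ one has $\rank(\xi_i) > q$, so $d^*_{\xi_i}(x) = 0$ since $x \in \bigoplus_{n \le q} M_n$ expands in $(d_\xi)_{\xi \in \Gamma_q}$. Summing produces the clean reduction
$$
e^*_\gamma(x) = m_h^{-1} \sum_{i = i_0+1}^a \epsilon_i \lambda_i e^*_{\eta_i}(P_{I_i} x).
$$

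The main structural observation, and the step I expect to be most delicate, is that every term with $i \ge i_0 + 2$ vanishes outright: by construction $I_i \subset \bar{\Gamma}_{q_i} \setminus \bar{\Gamma}_{\rank(\xi_{i-1})}$, and for such $i$ we have $\rank(\xi_{i-1}) > q$, so $I_i \cap \Gamma_q = \emptyset$ while $x$ is supported in $\Gamma_q$, forcing $P_{I_i} x = 0$. Only the single term $i = i_0 + 1$ can contribute, and using $\|P_I\| \le 4$ (which follows from Lemma \ref{lemma2} by writing $P_{(p,q_0]} = P_{q_0} - P_p$) together with the trivial bound $|e^*_{\eta_{i_0+1}}(y)| \le \|y\|$ for $y \in \mathfrak{X}_{Kus}$, one concludes $|x(\gamma)| \le 4 m_h^{-1} \|x\|$, as desired.
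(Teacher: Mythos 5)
Your proof is correct and follows essentially the same route as the argument the paper is quoting from Argyros--Haydon (Lemma 5.8 of \cite{AH}): use the evaluation analysis, cancel the terms of rank at most $q$ against the corresponding mt-terms via $e^{*}_{\xi_i}(x)=u(\xi_i)=0$, kill the high-rank terms by disjointness of supports, and bound the single surviving straddling term by $\norm[P_I]\leq 4$. The only adaptation needed for this paper's setting is your observation that $I_i\subset\bGamma_{q_i}\setminus\bGamma_{\rank(\xi_{i-1})}$, which correctly replaces the contiguous intervals $(p_{i-1},p_i]$ of \cite{AH}.
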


We recall the two classes of block sequences, characterised by the weights of the elements of the local support.

\begin{definition}[Definition 5.9 \cite{AH}]
We say that a block sequence $(x_k)_{k\in \N}$ in $\mathfrak{X}_{Kus}$ has bounded local weight if there exists some $j_1$ such that $\w(\gamma)\geq m_{j_1}^{-1}$ for all $\gamma$ in the local support of $x_k$, and all values of $k$. 

We say that a block sequence $(x_k)_{k\in\N}$ in $\mathfrak{X}_{Kus}$ has rapidly increasing local weight if, for each $k$ and each $\gamma$ in the local support of $x_{k+1}$, we have $\w (\gamma)<m_{i_k}^{-1}$ where $i_k=\max\ran_{FDD} x_k$.
\end{definition}

\begin{proposition}[Prop. 5.10 \cite{AH}]\label{LocWeightRIS}
Let $(x_k)_{k\in \N}\subset\mathfrak{X}_{Kus}$ be a bounded block sequence. If either $(x_k)$ has bounded local weight, or $(x_k)$ has rapidly increasing local weight, then the sequence $(x_k)$ is a RIS.
\end{proposition}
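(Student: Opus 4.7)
The plan is to verify conditions (1)–(3) of the RIS definition by producing an increasing sequence $(j_k)$ and a constant $C$. Condition (1) is immediate from the assumed boundedness of $(x_k)$, and condition (2) can always be enforced by taking $j_{k+1}>\max\rng_{FDD}(x_k)$. The substantive content is condition (3), and the key idea is that it follows directly from Lemma \ref{LocSuppWeight} whenever I can guarantee that the weight $m_i^{-1}$ of the testing functional (with $i<j_k$) is distinct from every weight appearing in the local support of $x_k$.

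For the bounded local weight case, I fix $j_0$ with $\w(\xi)\ge m_{j_0}^{-1}$ throughout the local supports of all $x_k$, and pick any strictly increasing $(j_k)$ with $j_1>j_0$ and $j_{k+1}>\max\rng_{FDD}(x_k)$. For testing indices $i$ in the range $j_0<i<j_k$, we have $m_i^{-1}<m_{j_0}^{-1}$, so $\w(\gamma)=m_i^{-1}$ cannot coincide with any local-support weight, and Lemma \ref{LocSuppWeight} yields $|x_k(\gamma)|\le 4m_i^{-1}\|x_k\|$. For the remaining indices $i\le j_0$ the lemma is unavailable (coincidence of weights is possible), but the trivial bound $|x_k(\gamma)|\le\|x_k\|\le m_{j_0}m_i^{-1}\|x_k\|$ suffices. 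Hence $C:=4m_{j_0}\sup_k\|x_k\|$ works. The only subtle point is absorbing the low-index range into $C$ via the factor $m_{j_0}$.

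For the rapidly increasing local weight case, I take $j_1=1$ and $j_{k+1}=\max\rng_{FDD}(x_k)+1=i_k+1$, so that $(j_k)$ is strictly increasing and condition (2) is automatic. The hypothesis forces every $\xi$ in the local support of $x_{k+1}$ to have weight index strictly greater than $i_k$, hence strictly greater than any $i<j_{k+1}$. Therefore, for every such $i$ and every $\gamma$ with $\w(\gamma)=m_i^{-1}$, Lemma \ref{LocSuppWeight} applies and gives $|x_{k+1}(\gamma)|\le 4m_i^{-1}\|x_{k+1}\|$; the requirement for $k=1$ is vacuous since $j_1=1$. Thus $C:=4\sup_k\|x_k\|$ completes the verification.

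No real obstacle is expected beyond lining up the indices correctly so that the ``weights do not coincide'' hypothesis of Lemma \ref{LocSuppWeight} is met on the nose, together with the mildly annoying bookkeeping in Case 1 needed to swallow the finitely many ``bad'' testing indices $i\le j_0$ into a larger constant.
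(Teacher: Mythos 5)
Your proof is correct and is essentially the argument of \cite{AH} (Prop.\ 5.10), which this paper cites without reproducing: condition (3) is exactly an application of Lemma \ref{LocSuppWeight} once the indices are arranged so that the testing weight avoids the local-support weights, with the finitely many low indices $i\le j_0$ in the bounded-local-weight case absorbed into the constant. The only point you leave untouched is one the statement itself glosses over: the RIS definition in this paper additionally requires the sequence to be a \emph{skipped} block sequence, so strictly speaking one should either assume this of $(x_k)$ or note that the verification of (1)--(3) is all that is used downstream.
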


\begin{corollary}[Prop. 5.11 \cite{AH}]\label{fundamental}
Let $Y$ be any Banach space and $T:\mathfrak{X}_{Kus}\to Y$ be a bounded linear operator. If $\norm[Tx_{k}]\to 0$ for every RIS $(x_{k})_{k}$ in $\mathfrak{X}_{Kus}$ then $\norm[Tx_{k}]\to 0$ for every bounded block sequence $(x_k)$ in $\mathfrak{X}_{Kus}$.
\end{corollary}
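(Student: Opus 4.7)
The plan is to argue by contradiction. Assume that $(x_k)_k$ is a bounded block sequence in $\mathfrak{X}_{Kus}$ with $\norm[Tx_k]\geq\delta>0$ for every $k$; I will extract a RIS along which $\norm[T(\cdot)]$ stays bounded below, contradicting the hypothesis. The key device is a decomposition by local weight. For each $k$, let $q_k$ be minimal with $x_k\in\bigoplus_{n=1}^{q_k}M_n$ and let $u_k\in\ell_\infty(\Gamma_{q_k})$ denote the preimage under $i_{q_k}$. For each $j\in\N$, split $u_k=u_k^{(\geq j)}+u_k^{(<j)}$ by partitioning the coordinates according to whether the indexing node has weight $\geq m_j^{-1}$ or $<m_j^{-1}$, and set $x_k^{(\geq j)}=i_{q_k}(u_k^{(\geq j)})$, $x_k^{(<j)}=i_{q_k}(u_k^{(<j)})$. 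The uniform bound on $\norm[i_{q_k}]$ then yields uniformly bounded vectors (in terms of $\norm[x_k]$) with FDD-ranges contained in $\rng_{FDD} x_k$.

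I would then distinguish two cases. \textbf{Case 1:} there exist $j_0\in\N$ and an infinite set $K\subset\N$ with $\norm[Tx_k^{(\geq j_0)}]\geq\delta/2$ for $k\in K$. Then $(x_k^{(\geq j_0)})_{k\in K}$ is a bounded block sequence of bounded local weight (bound $m_{j_0}^{-1}$), so Proposition~\ref{LocWeightRIS} makes it a RIS. The hypothesis now gives $\norm[Tx_k^{(\geq j_0)}]\to 0$, contradicting the choice of $K$. \textbf{Case 2:} for every $j$, eventually $\norm[Tx_k^{(\geq j)}]<\delta/2$, hence eventually $\norm[Tx_k^{(<j)}]\geq\delta/2$ by the triangle inequality. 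I build inductively $k_1<k_2<\dots$ and $j_1<j_2<\dots$: set $j_1=1$ and take $k_1$ with $\norm[Tx_{k_1}^{(<j_1)}]\geq\delta/2$; having chosen $k_l$, put $j_{l+1}>\max\rng_{FDD} x_{k_l}$ and pick $k_{l+1}>k_l$ with $\norm[Tx_{k_{l+1}}^{(<j_{l+1})}]\geq\delta/2$. Letting $y_l:=x_{k_l}^{(<j_l)}$, every node in the local support of $y_{l+1}$ has weight $<m_{j_{l+1}}^{-1}<m_{\max\rng_{FDD}y_l}^{-1}$, so $(y_l)$ has rapidly increasing local weight and is therefore a RIS by Proposition~\ref{LocWeightRIS}. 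The hypothesis then forces $\norm[Ty_l]\to 0$, contradicting $\norm[Ty_l]\geq\delta/2$.

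The main subtlety lies in verifying that the local-weight partition $u_k\mapsto(u_k^{(\geq j)},u_k^{(<j)})$ carried through $i_{q_k}$ produces vectors of norm uniformly controlled by $\norm[x_k]$ and whose FDD-ranges are contained in that of $x_k$; this rests on the BD-$\mathscr{L}_\infty$-structure (uniform isomorphism of each $M_q$ with $\ell_\infty(\Gamma_q)$) and passes to $\mathfrak{X}_{Kus}$ because the restriction $R$ has norm one. Once the decomposition is in place, Proposition~\ref{LocWeightRIS} converts the two structural alternatives into RIS, and the diagonal extraction in Case~2 is then routine.
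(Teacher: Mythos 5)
Your argument is correct and is essentially the proof the paper relies on: the statement is quoted from Proposition 5.11 of \cite{AH}, and the standard argument there is exactly your dichotomy — split each $x_k$ by local weight through $i_{q_k}$, and use Proposition \ref{LocWeightRIS} to turn either the bounded-local-weight alternative or the diagonally extracted rapidly-increasing-local-weight alternative into a RIS, contradicting the hypothesis. The supporting details you flag (the split vectors are uniformly bounded since $\norm[i_{q_k}]\leq 2$, and their FDD-ranges stay inside $\rng_{FDD}x_k$ because $d^{*}_\gamma$ for $\rank(\gamma)<\min\rng_{FDD}x_k$ is supported in $\Gamma_{\rank(\gamma)}$ and so annihilates the split preimages) check out.
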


\begin{corollary}[Prop. 5.12 \cite{AH}]\label{shrinking}
 The basis $(d^{*}_{\gamma_n})_n$ is shrinking. It follows that the dual space to $\mathfrak{X}_{Kus}$ is isomorphic to $\ell_1(\Gamma)$. 
\end{corollary}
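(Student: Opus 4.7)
My plan is to proceed in two steps: establish the shrinking property by applying Corollary~\ref{fundamental} to reduce to the RIS case, and then deduce the identification of the dual with $\ell_1(\Gamma)$ from general BD-space facts together with Lemma~\ref{lemma1} and Lemma~\ref{lemma2}.

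For shrinking, I first recall that a Schauder basis $(d_{\gamma_n})$ is shrinking if and only if $x^{*}(y_k)\to 0$ for every $x^{*}\in\mathfrak{X}_{Kus}^{*}$ and every bounded block sequence $(y_k)$. Fix such $x^{*}$ with $\|x^{*}\|\le 1$ and consider the bounded linear operator $T=x^{*}\colon\mathfrak{X}_{Kus}\to\R$. By Corollary~\ref{fundamental} it suffices to verify that $\|Ty_k\|=|x^{*}(y_k)|\to 0$ whenever $(y_k)$ is a RIS in $\mathfrak{X}_{Kus}$.

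So let $(y_k)$ be a $C$-RIS and suppose toward a contradiction that $|x^{*}(y_k)|\ge \delta>0$ for infinitely many $k$. Passing to a subsequence, I may assume that $x^{*}(y_k)$ has constant sign, so $x^{*}(y_k)\ge \delta$ for all $k$. Choose $j_0$ large enough that $10C/m_{j_0}<\delta$. Then for the vector $z=\tfrac{1}{n_{j_0}}\sum_{k=1}^{n_{j_0}} y_k$ we have $x^{*}(z)\ge \delta$ by linearity, while Lemma~\ref{p56}(a) (inequality \eqref{p56b}) gives $\|z\|\le 10C/m_{j_0}$, whence $|x^{*}(z)|\le \|x^{*}\|\cdot 10C/m_{j_0}<\delta$, a contradiction. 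Hence $x^{*}(y_k)\to 0$ for every RIS, and by Corollary~\ref{fundamental} for every bounded block sequence, so the basis is shrinking.

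For the dual identification, once shrinking is established the biorthogonal sequence $(d^{*}_{\gamma_n})$ is a Schauder basis of $\mathfrak{X}_{Kus}^{*}$. On the other hand, the inclusion $\mathfrak{X}_{Kus}\hookrightarrow\ell_\infty(\Gamma)$ yields $(d^{*}_{\gamma_n})\subset\ell_1(\Gamma)$, and by the analogue of Lemma~\ref{lemma1} in $\mathfrak{X}_{Kus}$ (valid by the closing remarks of Section~4) we have $\langle d^{*}_{\gamma_i}:i\le n\rangle=\langle e^{*}_{\gamma_i}:i\le n\rangle$ for each $n$, so $\overline{\langle d^{*}_{\gamma_n}:n\in\N\rangle}_{\ell_1(\Gamma)}=\ell_1(\Gamma)$. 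The uniform bound on the partial-sum projections from Lemma~\ref{lemma2} shows $(d^{*}_{\gamma_n})$ is a basis of $\ell_1(\Gamma)$ in its $\ell_1$-norm. Thus the natural map $\ell_1(\Gamma)\to\mathfrak{X}_{Kus}^{*}$ sending $\sum a_n d^{*}_{\gamma_n}$ to the corresponding functional is an isomorphism, giving $\mathfrak{X}_{Kus}^{*}\simeq\ell_1(\Gamma)$.

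There is no serious obstacle once Corollary~\ref{fundamental} and Lemma~\ref{p56} are in hand; the only subtle point is making sure the reduction ``shrinking $\Leftrightarrow$ weak-null on bounded block sequences'' is invoked correctly, and that the averaging argument is applied after passing to a monochromatic-sign subsequence so that the convex combination inherits the lower bound $\delta$.
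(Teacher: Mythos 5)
Your argument for the shrinking half is correct and is exactly the route the paper intends by quoting Proposition 5.12 of \cite{AH}: reduce to RIS via Corollary \ref{fundamental} applied to the rank-one operator $x^{*}\colon\mathfrak{X}_{Kus}\to\R$, pass to a constant-sign subsequence (a subsequence of a $C$-RIS is again a $C$-RIS, so the first $n_{j_0}$ terms of it are an admissible input for Lemma \ref{p56}), and play the lower bound $\delta$ against the upper bound $10C/m_{j_0}$ from \eqref{p56b}. The equivalence ``shrinking $\Leftrightarrow$ every bounded block sequence is weakly null'' is the standard one and is invoked correctly.

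The second half has a genuine jump at the final ``Thus''. Knowing that $(d^{*}_{\gamma_n})$ is simultaneously a Schauder basis of $\ell_1(\Gamma)$ (Lemmas \ref{lemma1}, \ref{lemma2}) and, by shrinkingness, of $\mathfrak{X}_{Kus}^{*}$ does not imply that the map matching the two bases is an isomorphism: two spaces can share a basis indexed by the same set without the bases being equivalent. What comes for free is one direction only --- the restriction map $\ell_1(\Gamma)\to\mathfrak{X}_{Kus}^{*}$ has norm at most $1$ and, by shrinkingness, dense range. The missing ingredient is the lower estimate, and this is precisely where the $\mathscr{L}_\infty$-structure must enter: for $a\in\ell_1(\Gamma_q)$ pick $x\in\ell_\infty(\Gamma_q)$ with $\lVert x\rVert_\infty=1$ and $\sum_\gamma a_\gamma x(\gamma)=\lVert a\rVert_1$; since $i_q(x)|_{\Gamma_q}=x$ and $\lVert i_q\rVert\le 2$, testing against $i_q(x)$ gives $\bigl\lVert\sum_{\gamma\in\Gamma_q}a_\gamma e^{*}_\gamma\bigr\rVert_{\mathfrak{X}_{Kus}^{*}}\ge\frac{1}{2}\lVert a\rVert_1$. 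This makes the restriction map bounded below on a dense subspace, hence an onto isomorphism. Your cited Lemmas \ref{lemma1} and \ref{lemma2} give the basis of $\ell_1(\Gamma)$ but not this norm comparison; the extension operators $i_q$ have to be used explicitly.
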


\section{Dependent sequences}
In this section we introduce the classical tools in the study of spaces defined with the use of saturated norms. 
\begin{lemma}\label{normnode}
a) Let $j\in\N$ and $k\leq n_{2j}$. Let also $(x_{k})_{k}\subset\mathfrak{X}_{Kus}$ be a normalized skipped block sequence such that $\rngf(x_{k})=(p_{k-1}, p_{k}]$ for some strictly increasing $(p_k)$ with $p_1\geq 2j-1$. Then there exists a node $\gamma\in\Gamma$ such that
$$
e^{*}_{\gamma}=\sum_{k=1}^{n_{2j}}d^{*}_{\xi_{k}}+m_{2j}^{-1}\sum_{k=1}^{n_{2j}}\varepsilon_ke^{*}_{\eta_{k}} P_{I_{k}}
$$
with the following properties
\begin{enumerate}
\item[(i)] $\rank (\xi_{k})=p_{k}+1$ for each $k$, 

\item[(ii)] $\varepsilon_ke^{*}_{\eta_{k}} P_{I_{k}}(x_{k})\geq \sfrac{1}{2}$ and $\eta_{k}\in\Gamma_{p_{k}}\setminus \Gamma_{p_{k-1}}$ for each $k$,

\item[(iii)] $e^{*}_{\gamma}(\sum_{k=1}^{n_{2j}}x_{k})\geq \frac{n_{2j}}{2m_{2j} } $.
 
\end{enumerate}

\noindent b) Let $(d_{\xi_{i}})_{i=1}^{n_{2j}}$ be a finite subsequence of the basis such that $\rank(\xi_{i})+1<\rank(\xi_{i+1})$ for every $i$ and $\rank(\xi_1)\geq 2j-1$.

Then the node
\begin{equation}
e^{*}_{\xi}=\sum_{i=1}^{n_{2j}}d^{*}_{\zeta_{i}}+m_{2j}^{-1}\sum_{i=1}^{n_{2j}}d^{*}_{\xi_{i}}
\end{equation}
with $\rank(\zeta_{i})=\rank(\xi_{i})+1$ is a regular node and $e^{*}_{\xi}(\sum_{i=1}^{n_{2j}}d_{\xi})=\frac{n_{2j}}{m_{2j}}$.
\end{lemma}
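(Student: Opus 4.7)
For (a), the plan is to build, by induction on $k$, a regular node $\gamma$ of weight $m_{2j}^{-1}$ and age $n_{2j}$ whose evaluation analysis exhibits exactly the announced data. The key local step is to isolate, for each $x_k$, a coordinate $\eta_k$ absorbing a definite fraction of $\norm[x_k]$. Using $\rngf(x_k)\subset(p_{k-1},p_k]$, one has $x_k=\bar i_{p_k}(x_k|_{\Gamma_{p_k}})$ and hence $\norm[x_k]\leq 2\,\sup_{\eta\in\Gamma_{p_k}}|x_k(\eta)|$ by Lemma~\ref{lemma2}. Moreover $x_k(\eta)=0$ for every $\eta\in\Gamma_{p_{k-1}}$: by Lemma~\ref{lemma1} one has $e^*_\eta\in\langle d^*_\delta:\rank(\delta)\leq\rank(\eta)\rangle$, so $d_\gamma(\eta)=0$ whenever $\rank(\gamma)>\rank(\eta)$. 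Consequently there exists $\eta_k\in\Gamma_{p_k}\setminus\Gamma_{p_{k-1}}$ with $|e^*_{\eta_k}(x_k)|\geq 1/2$; the skipped-block hypothesis lets one arrange in fact $\eta_k\in\Gamma_{p_k}\setminus\Gamma_{p_{k-1}+1}$, which is what the recursive node formula below requires.

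Setting $\epsilon_k=\operatorname{sign}(e^*_{\eta_k}(x_k))$ and $I_k$ equal to the interval of those $\gamma\in\Gamma$ with $\rank(\gamma)\in(p_{k-1}+1,p_k]$, one has $\eta_k\in I_k$ and $P_{I_k}x_k=x_k$. Define recursively
$$
\xi_1=(p_1+1,0,m_{2j},I_1,\epsilon_1,e^*_{\eta_1}),\qquad \xi_k=(p_k+1,\xi_{k-1},m_{2j},I_k,\epsilon_k,e^*_{\eta_k})\quad(2\leq k\leq n_{2j}).
$$
The hypothesis $p_1\geq 2j-1$ makes $\rank(\xi_1)\geq 2j$, so $m_{2j}^{-1}$ is an admissible weight; the inclusions $\eta_k,I_k\subset\Gamma_{p_k}\setminus\Gamma_{\rank(\xi_{k-1})}$ and $\age(\xi_k)=k\leq n_{2j}$ ensure each $\xi_k\in\Delta^{reg}_{p_k+1}$. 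Taking $\gamma:=\xi_{n_{2j}}$, the evaluation analysis gives (i) and (ii) at once. For (iii), since $\rank(\xi_k)=p_k+1$ lies outside every $\rngf(x_l)$ (by skipping), the $\bdp$-part of $e^*_\gamma$ annihilates the $x_l$'s, and thus
$$
e^*_\gamma\Bigl(\sum_{k=1}^{n_{2j}}x_k\Bigr)=\frac{1}{m_{2j}}\sum_{k=1}^{n_{2j}}\epsilon_k e^*_{\eta_k}(P_{I_k}x_k)\geq\frac{n_{2j}}{2m_{2j}}.
$$

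Part (b) follows the same scheme and is simpler: by Remark~\ref{M} one has $P^*_{\Delta_{\rank(\xi_i)}}e^*_{\xi_i}=d^*_{\xi_i}$, so setting
$$
\zeta_1=(\rank(\xi_1)+1,0,m_{2j},\Delta_{\rank(\xi_1)},1,e^*_{\xi_1}),\quad \zeta_i=(\rank(\xi_i)+1,\zeta_{i-1},m_{2j},\Delta_{\rank(\xi_i)},1,e^*_{\xi_i})
$$
produces the required regular node: $\rank(\xi_1)\geq 2j-1$ legitimises $m_{2j}^{-1}$, the strict gap $\rank(\xi_{i-1})+1<\rank(\xi_i)$ places $\xi_i$ and $\Delta_{\rank(\xi_i)}$ inside $\Gamma_{\rank(\xi_i)}\setminus\Gamma_{\rank(\zeta_{i-1})}$, and $\age(\zeta_i)=i\leq n_{2j}$. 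Taking $\xi:=\zeta_{n_{2j}}$ yields $e^*_\xi=\sum_i d^*_{\zeta_i}+m_{2j}^{-1}\sum_i d^*_{\xi_i}$, and biorthogonality (all $d^*_{\zeta_i}$ annihilate each $d_{\xi_l}$ because the ranks differ) gives $e^*_\xi(\sum_i d_{\xi_i})=n_{2j}/m_{2j}$. The main burden throughout is the bookkeeping of the nested rank, age and inclusion constraints built into the BD construction; no genuinely delicate estimate is required.
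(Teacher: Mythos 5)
Your argument is correct and coincides with the paper's own proof: both extract, via the norm-$2$ bound on the extension operators $\bar i_{p_k}$, a coordinate $\eta_k\in\Gamma_{p_k}\setminus\Gamma_{p_{k-1}}$ with $|e^*_{\eta_k}(x_k)|\geq 1/2$ and then assemble the nodes $\xi_k$ recursively exactly as you do, part (b) being the same one-line construction with $I_i=\Delta_{\rank(\xi_i)}$. Your write-up is in fact slightly more careful on one point of bookkeeping --- you shrink $I_k$ to the ranks in $(p_{k-1}+1,p_k]$ so that $I_k\subset\Gamma_{p_k}\setminus\Gamma_{\rank(\xi_{k-1})}$ as the node definition requires, whereas the paper takes $I_k=\cup_{i=p_{k-1}+1}^{p_k}\Delta_i$ --- but this is not a different approach.
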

\begin{proof} a) (see\cite{AH}, Proposition 4.8) Let $x_{k}=i_{k}(u_{k})$ where $u_{k}\in\Gamma_{p_{k}}\setminus 
\Gamma_{p_{k-1}}$ is the restriction of $x_{k}$ on $\Gamma_{p_{k}}$. Since
$$
2\norm[u_{k}]\geq \norm[i_{p_{k}}(u_{k})]=\norm[x_{k}]=1
$$
we can choose $\eta_{k}\in\Gamma_{p_{k}}\setminus \Gamma_{p_{k-1}}$ such that $\abs{e^{*}_{\eta_{k}}(u_{k})}\geq \sfrac{1}{2}$. Setting $I_{k}=\rng_{FDD}(x_{k})=\cup_{i=p_{k-1}+1}^{p_{k}}\Delta_{i}$, choose $\varepsilon_k\in\{-1,1\}$ such that
\begin{equation}
 \label{eq:5}
\abs{e^{*}_{\eta_{k}}P_{I_{k}}(x_{k})}=\varepsilon_ke^{*}_{\eta_{k}}P_{I_{k}}(x_{k})=\varepsilon_ke^{*}_{\eta_{k}}(u_{k})\geq \sfrac{1}{2}.
 \end{equation}

The nodes $\gamma_{k}=(p_{k}+1,\gamma_{k-1},m_{2j},I_{k},\varepsilon_k,e^{*}_{\eta_{k}})$, $\gamma_{0}=0$, $k=1,\dots,n_{2j}$ give the node $\gamma=\gamma_{n_{2j}}$ with the properties (i)-(iii).

\medskip

b) Take the nodes $\zeta_{i}=(\rank(\xi_{i})+1,\zeta_{i-1}, m_{2j}, I_{i},1,e^{*}_{\xi_{i}})$, $\zeta_{0}=0$, where $I_{i}=\Delta_{\rank(\xi_{i})}$.
\end{proof}
\begin{definition}\label{dep-def}
Fix $j\in\N$, $C\geq 1$ with $n_{2j-1}\geq 200 C$ and let $(\gamma_{k},\bar{x}_{k})_{k=1}^{d} $ be a $(\Gamma,j)$-special sequence.

A sequence $(\gamma_{k},x_{k})_{k=1}^{d}$, $d\leq n_{2j-1}$, with $x_k\in\mathfrak{X}_{Kus}$ and $\gamma_{k}=(q_{k}+1,\gamma_{k-1},m_{2j-1}, I_{k},1,\esga[\eta_{k}])$ for each $k$, where $\gamma_{0}=0$, $q_1\geq 4j_1-2$, $2^{-q_1}\leq 1/4n^2_{2j-1}$, is called a $j$-dependent sequence with a constant $C$ of length $d$ with respect to $(\gamma_{k},\bar{x}_{k})_{k=1}^{d}$ if the following conditions are satisfied.
 \begin{enumerate}
 \item if $k$ is even then $x_k=R\bar{x}_k$, $\rng (x_k)= I_k$,
\item if $k$ is odd then
${\displaystyle 
x_{k}=\frac{c_{k}m_{l_k}}{n_{l_{k}}}\sum_{l=1}^{n_{l_{k}}}x_{k,l}}$, 
where $(x_{k,l})_{l}$ is a normalized skipped block sequence which is a $C$-RIS of length $n_{l_k}$, $m_{l_{k}}=\w(\eta_{k})$, $m_{l_1}\geq n^2_{2j-1}$,  $\norm[x_{k}]=\sfrac{1}{2}$, $\rng (x_k)\subset I_k$ and $e^{*}_{\eta_{k}}(x_{k})\geq \sfrac{1}{40C}$,

\item 
$\vert\esga(\bar{x}_{k})-\esga(x_{k})\vert<\sfrac{1}{4n_{q_{k}}^{2}}$ for every $\gamma\in\Gamma$ and every $k$,

\item $(\gamma_{k},x_{k})_{k=1}^{d-1}$ is $j$-dependent of length $d-1$ with respect to the $(\Gamma,j)$-special sequence $(\gamma_{k},\bar{x}_{k})_{k=1}^{d-1}$.
 \end{enumerate}
 Moreover, we say that a sequence $(\gamma_{k},x_{k})_{k=1}^{d}$ is a $j$-dependent sequence of length $d$, if it is $j$-dependent with respect to some $(\Gamma,j)$-special sequence.
\end{definition}

\begin{remark}\label{rem6.3}
Take $(x_{k,l})_l$ as in (2) of Definition \ref{dep-def} with $\max\rng_{FDD}(x_k)\geq 2l_k-1$ for each $k\in\N$. Then Lemmas~\ref{p56}a) and \ref{normnode}a) yield that there is a node  $\eta_{k}\in\Gamma$ such that  
\begin{equation}\label{cbound}
\frac{1}{2}\leq e^{*}_{\eta_{k}}(\frac{m_{l_k}}{n_{l_{k}}}\sum_{l=1}^{n_{l_{k}}}x_{k,l})
\leq \norm[\frac{m_{l_k}}{n_{l_{k}}}\sum_{l=1}^{n_{l_{k}}}x_{k,l}]\leq 10C.
\end{equation}
Therefore $c_{k}$ in Definition \ref{dep-def} satisfies $\sfrac{1}{20C}\leq c_{k} \leq 1$. 

Moreover, the last condition in the property (2) of  Definition \ref{dep-def}, i.e. $e^{*}_{\eta_{k}}(x_{k})\geq \sfrac{1}{40C}$, follows from \eqref{cbound} using the lower bound of $c_{k}$.
\end{remark}

\begin{lemma} \label{dep1}
 Let $(z_{k})_{k}$ be a normalized block sequence in $\mathfrak{X}_{Kus}$ and $(d_{\xi_{n}})_{n\in M}$ be a subsequence of the basis. Then for every $j\in\N$ there exists a $j$-dependent sequence of length $n_{2j-1}$, 
$(\gamma_{i},x_{i})_{i\leq n_{2j-1}}$, such that $x_{2i-1}\in \langle z_{k}: k\in\N\rangle$ and $x_{2i}\in \langle d_{\xi_{n}}: n\in M\rangle$.
 \end{lemma}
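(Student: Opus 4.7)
The plan is a tandem induction building simultaneously, one level at a time, a $(\Gamma,j)$-special sequence $(\gamma_k,\bar x_k)_{k=1}^{n_{2j-1}}$ and the desired $j$-dependent sequence $(\gamma_k,x_k)_{k=1}^{n_{2j-1}}$ whose nodes $\gamma_k$ coincide. Because the $\eta_k$ parts of the two sequences are identical by construction, the compatibility condition of Definition~\ref{defcompatible} is verified automatically, so each $\gamma_k$ lands in $\Delta_{q_k+1}^{sp}\subset\Gamma$. Before the induction, I fix a constant $C>2$ so that Lemma~\ref{ris-exists} applies, and I apply Lemma~\ref{neighbours} to extract an infinite $M'\subset M$ for which $(\xi_n)_{n\in M'}$ are pairwise non-neighbours; this reservoir will feed the even steps.

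At an odd step $k=2i-1$, the weight $\w(\eta_k)=m_{l_k}^{-1}$ is forced by the special sequence format: $l_1=4l-2$ for a sufficiently large $l$ (so that $m_{l_1}^{-1}<n_{2j-1}^{-2}$ and $m_{l_1}\ge n_{2j-1}^2$), and $l_k=4\sigma((\gamma_p,\bar x_p)_{p<k})$ for $k>1$, which by \eqref{coding growth} grows rapidly. Lemma~\ref{ris-exists} provides a normalized $C$-RIS $(x_{k,l})_{l=1}^{n_{l_k}}$ inside $\langle z_n:n\in\N\rangle$, skipped and FDD-successive to $x_{k-1}$, with first FDD-range index beyond $l_k-1$. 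Lemma~\ref{normnode}a), applied with $2j$ there replaced by $l_k$, then yields $\eta_k\in\Gamma$ of weight $m_{l_k}^{-1}$ with $e^*_{\eta_k}\bigl(\tfrac{m_{l_k}}{n_{l_k}}\sum_l x_{k,l}\bigr)\ge \tfrac12$. Setting $x_k=c_k\tfrac{m_{l_k}}{n_{l_k}}\sum_l x_{k,l}$ with $c_k$ chosen so that $\|x_k\|=\tfrac12$, Remark~\ref{rem6.3} gives $c_k\in[1/(20C),1]$ and $e^*_{\eta_k}(x_k)\ge 1/(40C)$, so condition (2) of Definition~\ref{dep-def} is met. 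For the companion $\bar x_k$, I take first the canonical lift $\bar y_k=\sum_{\gamma\in\Gamma}d_\gamma^*(x_k)\bar d_\gamma$ (so $R\bar y_k=x_k$, hence $e^*_\gamma(\bar y_k)=e^*_\gamma(x_k)$ for every $\gamma\in\Gamma$ by $R^*e^*_\gamma=e^*_\gamma$), then choose $\bar x_k$ with rational coefficients, $\|\bar x_k\|\le 1$, so close to $\bar y_k$ in $\mathfrak{X}_{\bar\Gamma}$-norm that $|e^*_\gamma(\bar x_k)-e^*_\gamma(x_k)|\le\|\bar x_k-\bar y_k\|<1/(4n_{q_k}^2)$ uniformly in $\gamma$.

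At an even step $k=2i$, the coding fixes $l_k=4\sigma((\gamma_p,\bar x_p)_{p<k})$. From $M'$ I select $\beta_1<\cdots<\beta_{n_{l_k}}$ with ranks beyond $\max\rng_{FDD}(x_{k-1})$ and with $\rank(\beta_r)+1<\rank(\beta_{r+1})$ for each $r$. Lemma~\ref{normnode}b) applied with $2j=l_k$ then provides $\eta_k\in\Gamma$ with $\mt(e^*_{\eta_k})=m_{l_k}^{-1}\sum_r d^*_{\beta_r}$. I set $\bar x_k=\tfrac{m_{l_k}}{n_{l_k}}\sum_r\bar d_{\beta_r}$ (already rational) and $x_k=R\bar x_k=\tfrac{m_{l_k}}{n_{l_k}}\sum_r d_{\beta_r}\in\langle d_{\xi_n}:n\in M\rangle$; condition (3) of Definition~\ref{dep-def} holds trivially since $R^*e^*_\gamma=e^*_\gamma$. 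Finally, at every step I form $\gamma_k$ by the prescribed formula, with $\lambda_k=1$, $\epsilon_k$ free for odd $k$, and with $\epsilon_k=1$, $\lambda_k\in\Net_{1,q_k}$ approximating $\lambda_{\gamma_{k-1},\bar x_{k-1}}$ within $1/(4n_{\rank(\gamma_{k-1})}^2)$ for even $k$; the rank $q_k+1$ of $\gamma_k$ is chosen large enough to accommodate both $\eta_k$ and all range requirements in Definitions~\ref{defspecial} and~\ref{dep-def}.

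The main obstacle is the uniformity in $\gamma$ of the approximation condition (3) at odd steps: since $\Gamma$ is infinite, one needs a single $\bar x_k$ close to $x_k$ at every coordinate simultaneously. This is handled by the observation that each $e^*_\gamma$ has norm at most $1$ on $\mathfrak{X}_{\bar\Gamma}$ together with $R^*e^*_\gamma=e^*_\gamma$, so a single $\mathfrak{X}_{\bar\Gamma}$-norm bound on $\bar x_k-\bar y_k$ dominates all $|e^*_\gamma(\bar x_k)-e^*_\gamma(x_k)|$ at once. The secondary bookkeeping challenge is to ensure, throughout the $n_{2j-1}$ steps, that the weights, ranks, and FDD-ranges forced by~\eqref{coding growth} are consistent with the supplies of usable nodes in $M'$ and usable $C$-RIS in $\langle z_n\rangle$; both are inexhaustible because $M'$ is infinite and Lemma~\ref{ris-exists} produces $C$-RIS in any block subspace and any interval.
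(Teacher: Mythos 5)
Your construction follows the paper's proof almost step for step: the same tandem induction producing the special sequence $(\gamma_k,\bar x_k)$ and the dependent sequence $(\gamma_k,x_k)$ simultaneously, the same use of Lemma \ref{ris-exists} and Lemma \ref{normnode}(a) with Remark \ref{rem6.3} at odd steps, Lemma \ref{normnode}(b) on a pairwise non-neighbour subsequence of $(d_{\xi_n})_{n\in M}$ at even steps, and the same rational perturbation $\bar x_k$ of a lift of $x_k$ to handle condition (3) of Definition \ref{dep-def} uniformly in $\gamma$ via $R^*e^*_\gamma=e^*_\gamma$ and $\|e^*_\gamma\|\le 1$.

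There is, however, one step that is not justified as written and could fail: at odd steps you lift $x_k$ by the coordinatewise formula $\bar y_k=\sum_{\gamma\in\Gamma}d^*_\gamma(x_k)\bar d_\gamma$ and then ask for $\bar x_k$ with $\|\bar x_k\|\le 1$ within distance $1/(4n_{q_k}^2)$ of $\bar y_k$. While $R\bar y_k=x_k$ is correct, nothing bounds $\|\bar y_k\|_{\mathfrak{X}_{\bar\Gamma}}$ by $\|x_k\|=\sfrac12$: the map $x\mapsto\sum_\gamma d^*_\gamma(x)\bar d_\gamma$ is a linear right inverse of $R$, and if it were uniformly bounded it would split the quotient $\mathfrak{X}_{\bar\Gamma}\to\mathfrak{X}_{\bar\Gamma}/Y$ and complement $Y$ in $\mathfrak{X}_{\bar\Gamma}$ --- a fact that is neither established nor needed anywhere in the paper. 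If $\|\bar y_k\|>1+1/(4n_{q_k}^2)$, no admissible $\bar x_k$ in the unit ball exists, and condition (iii) of Definition \ref{defspecial} ($\|\bar x_i\|\le 1$ for odd $i$) cannot be met. The paper avoids this by invoking the quotient property of $R$ (Proposition on self-determined sets): since $R$ induces an isomorphism with $\mathfrak{X}_{\bar\Gamma}/Y$ and $\|x_k\|=\sfrac12$, one may choose \emph{some} lift $\bar y_k$ with $R\bar y_k=x_k$ and $\|\bar y_k\|\le 1$, and only then approximate by a rational $\bar x_k$; the identity $e^*_\gamma(\bar y_k)=e^*_\gamma(x_k)$ you need holds for \emph{any} lift, again by $R^*e^*_\gamma=e^*_\gamma$. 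Replacing your canonical lift by this one repairs the argument; everything else matches the paper.
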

 
\begin{proof}
 Passing to a further subsequence we may assume that 
 \begin{equation}
 \mbox{ $d_{\xi_{n}}$ are pairwise non-neighbours and $\rank (\xi_{n})+1<\rank (\xi_{n+1})$.}
 \end{equation}

Let $j_{1}$ be such that $m_{4j_{1}-2}>n_{2j-1}^{2}$ and choose $q_1$ big enough to guarantee that $4j_{1}-2<q_1$ and $2^{-q_1}\leq \sfrac{1}{4n^2_{2j-1}}$.

Let $(x_{1,k})_{k=1}^{n_{4j_{1}-2}}$ be a normalized skipped block sequence of $\langle z_{l}: l\geq q\rangle$ which is a $C$-RIS. 
 Setting
$$
x_{1}=\frac{c_1m_{4j_{1}-2}}{n_{4j_{1}-2}}\sum_{k=1}^{n_{4j_{1}-2}}x_{1,k}
\qquad
\textrm{with}\,\,\, \norm[x_{1}]=\sfrac{1}{2}$$
from Remark~\ref{rem6.3} we get $\sfrac{1}{20C}\leq c_{1}\leq 2$ and that there exists  a node  $\eta_{1}\in\Gamma$  with $\w(\eta_{1})=m_{4j_{1}-2}^{-1}$   such that
$$e^{*}_{\eta_{1}}P_{I_{1}}(x_{1})\geq
\frac{1}{40C},
$$
where $I_{1}=\bigcup\{\Delta_{p}: p\in\rng_{FDD}(x_{1})\}$. 

Using that $R$ is a quotient operator of norm 1 take a block $\bar{y}_{1}\in\mathfrak{X}_{\bGamma}$ such that $x_{1}=R(\bar{y}_{1})$ and $\norm[\bar y_{1}]\leq 1$. Then choose a vector $\bar{x}_{1}$ with rational coefficients in the unit ball of $\langle \bar{d}_\gamma: \gamma\in\bGamma_{q_1}\rangle$ such that $\norm[\bar{x}_{1}-\bar{y}_{1}]_{\mathfrak{X}_{\bGamma}}\leq \sfrac{1}{4n_{q_{1}}^{2}}$.

Note that $R(\bar{x}_{1})=R(\bar{x}_{1}-\bar{y}_{1})+R(\bar{y}_{1})=R(\bar{x}_{1}-\bar{y}_{1})+x_{1}$ and 
hence for every $\gamma\in\Gamma$, 

\begin{equation}\label{xapprox}
\vert \esga (\bar{x}_{1})-\esga(x_{1})\vert=
\vert\esga R(\bar{x}_{1})-\esga R(x_{1})\vert\leq
\norm[\esga\circ R]\norm[\bar{x}_{1}-\bar{y}_{1}]_{\mathfrak{X}_{\bGamma}}\leq 
\sfrac{1}{4n_{q_{1}}^{2}}.
\end{equation}

We take $\gamma_{1}$ to be the node
$$
\gamma_{1}=(q_{1}+1, 0, m_{2j-1}^{-1}, I_{1},1,e^{*}_{\eta_{1}}).
$$
From the above we get that $(\gamma_{1},x_{1})$ is a $j$-dependent couple of length $1$ with respect to the $(\Gamma,j)$-special sequence $(\gamma_{1},\bar{x}_{1})$.

Set $j_{2}=\sigma(\gamma_{1},\bar{x}_{1})$ and choose $x_{2}, e^{*}_{\eta_{2}}$ such that
$$
x_{2}=m_{4j_{2}}n_{4j_{2}}^{-1}\sum_{k\in F_{2}}d_{\xi_{2,k}}\in X_{Kus}\qquad\textrm{and}\qquad mt(\esga[\eta_{2}])=m_{4j_{2}}^{-1}\sum_{k\in F_{2}}d^{*}_{\xi_{2,k}}
$$
where $|F_2|=n_{4j_{2}}$ and $q_1+2<\min\rng_{FDD}(x_{2})$. 
Such a node exists by Lemma~\ref{normnode}(b) since $\rank (\xi_{n})+1<\rank (\xi_{n+1})$. We also take the node
$$
\gamma_{2}=(q_{2}+1,\gamma_{1},m_{2j-1}, I_{2},1,\lambda_{2}e^{*}_{\eta_{2}})\in\Gamma
$$
where $I_{2}=[p_{2},q_{2}]$ is the range of $x_{2}$ with respect to the basis and $\lambda_{2}\in\Net_{1,q_{1}}$ is chosen such that
$$
\vert\lambda_{2}-\esga[\eta_{1}](\bar{x}_{1})\vert\leq \sfrac{1}{4n_{q_{1}}^{2}}.
$$
From the above equation and \eqref{xapprox} we get
$$
\vert\lambda_{2}-\esga[\eta_{1}](x_{1})\vert\leq \frac{1}{2n_{q_{1}}^{2}}\Rightarrow
\lambda_{2}\geq \esga[\eta_{1}](x_{1})-\frac{1}{2n_{q_{1}}^{2}}\geq \frac{c_{1}}{2}-\frac{1}{2n_{q_{1}}^{2}}\geq \frac{1}{45C}.
$$
Pick $\bar{x}_2$ to be the corresponding average of $(\bar{d}_{\xi_{2,k}})_{k\in F_2}$. It follows that $x_2=R\bar{x}_2$ (recall that $d_\gamma=R\bar{d}_\gamma$ for each $\gamma\in\Gamma$) and $\bar{x}_1<\bar{x}_2$. Then we get that $(\gamma_{i},x_{i})_{i=1}^{2}$ is $j$-dependent of length 2 with respect to the $(\Gamma,j)$-special sequence
$(\gamma_{i},\bar{x}_{i})_{i=1}^{2}$.

Set $j_{3}=\sigma (\gamma_{i},\bar{x}_{i})_{i=1}^{2}$. We continue to choose $x_{3}, e^{*}_{\gamma_{3}}$, $x_{4},e^{*}_{\gamma_{4}}$ in the same way we have chosen $x_{1}, e^{*}_{\gamma_{1}}, x_{2},e^{*}_{\gamma_{2}}$ taking care that $x_{1},x_{2},x_{3},x_{4}$ is a skipped block sequence (with respect to the FDD) and repeat the procedure obtaining the desired dependent sequence. 
\end{proof}

Notice that for a dependent sequence $(\gamma_i,x_i)_{i\leq n_{2j-1}}$ with a constant $C$ we have $\norm[\frac{m_{2j-1}}{n_{2j-1}}\sum_{i=1}^{n_{2j-1}}x_{i}]\geq \sfrac{1}{45C}$. Indeed, consider the functional $e^{*}_{\zeta_{n_{2j-1}}}$ determined by the nodes $(\gamma_{i})_{i=1}^{n_{2j-1}}$,  i.e. of the form
$$
e^{*}_{\zeta_{n_{2j-1}}} =\sum_{i=1}^{n_{2j-1}}d^{*}_{\gamma_{i}}+m_{2j-1}^{-1}\sum_{i=1}^{n_{2j-1}/2}(e^{*}_{\eta_{2i-1}}P_{I_{2i-1}}+\lambda_{2i}e^{*}_{\eta_{2i}}P_{I_{2i}}),
$$
and notice that
\begin{align*}
e^{*}_{\zeta_{n_{2j-1}}}
\left(
\frac{m_{2j-1}}{n_{2j-1}}
\sum_{i=1}^{n_{2j-1}}x_{i}
\right)
&\geq 
\frac{1}{n_{2j-1}}
\left(
\sum_{i=1}^{n_{2j-1}/2}
e^{*}_{\eta_{2i-1}}P_{I_{2i-1}}(x_{2i-1})+\lambda_{2i}
e^{*}_{\eta_{2i}}(x_{2i})
\right)
\\
&\geq\frac{1}{n_{2j-1}}
\sum_{i=1}^{n_{2j-1}/2} 
\left(
\frac{c_{2i-1}}{2}+\frac{c_{2i-1}}{2}-
\frac{1}{2n_{q_{2i-1}}^{2}}
\right)
\geq \frac{1}{45C}.
\end{align*}
using that $c_{2i-1}\geq\sfrac{1}{20C}$. 

\begin{lemma}\label{dep2}
Let $(\gamma_{i},x_{i})_{i\leq n_{2j-1}}$ be a $j$-dependent sequence. Then
 $$
 \norm[\frac{1}{n_{2j-1}}\sum_{i=1}^{n_{2j-1}}(-1)^{i+1} x_{i}]\leq\frac{250}{m^{2}_{2j-1}}.
 $$
\end{lemma}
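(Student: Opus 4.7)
The plan is to apply Lemma~\ref{p56}(b) with $j_0 = 2j-1$ and scalars $\lambda_i = (-1)^{i+1}$. This reduces the task to two steps: first, verifying that $(x_i)_{i=1}^{n_{2j-1}}$ is a $C'$-RIS for an absolute constant $C'$; second, establishing the uniform bound
\[
\Bigl\vert \esga\Bigl(\sum_{i\in J}(-1)^{i+1}x_i\Bigr)\Bigr\vert\leq C_0
\]
for every interval $J\subset\{1,\ldots,n_{2j-1}\}$ and every $\gamma\in\Gamma$ with $\w(\gamma)=m_{2j-1}^{-1}$. The first step follows from Proposition~\ref{LocWeightRIS}: the odd $x_{2m-1}$ have local supports controlled by weights $m_{l_{2m-1}}^{-1}$ that strictly decrease by \eqref{coding growth} and Definition~\ref{dep-def}(2), while the even $x_{2m}$ are skipped basis averages whose local supports have weight $1$. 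This is a rapidly increasing local weight condition, so $(x_i)$ is a RIS.

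For the second step, since weight $m_{2j-1}^{-1}$ is odd-indexed, $\gamma$ is necessarily a special node, and its evaluation analysis yields a $(\Gamma,j)$-special sequence $(\zeta_k,\bar y_k)$ with data $(\tilde I_k,\tilde\epsilon_k,\tilde\lambda_k\esga[\tilde\eta_k])_k$. Compare $(\zeta_k)$ with the defining special sequence $(\gamma_k,\bar x_k)$ of the dependent sequence. The injectivity of $\sigma$ and the growth property \eqref{coding growth} force a unique maximal $i_0\geq 0$ with $\zeta_k=\gamma_k$ and $\bar y_k = \bar x_k$ for $k\leq i_0$; if $i_0<n_{2j-1}$, then for every $k>i_0$ the weight $\w(\tilde\eta_k)$ differs from $\w(\eta_k)$, the tail weights are all distinct from $m_{2j-1}^{-1}$, and they are strictly increasing in the sense required by Corollaries~\ref{diffw} and \ref{diffwb}.

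For the matched part $k\leq i_0$ we have $\tilde\eta_k=\eta_k$, $\tilde I_k=I_k$, and by Definition~\ref{defcompatible}(4) the scalar $\tilde\lambda_{2m}$ agrees with $\lambda_{\zeta_{2m-1},\bar x_{2m-1}}$ up to $1/(4n_{q_{2m-1}}^2)$. Since the dependent sequence is skipped and $\rank(\gamma_k)=q_k+1$ satisfies $\rank(\gamma_k)>\max\rngf(x_k)$ and $\rank(\gamma_k)<\min\rngf(x_{k+1})$, the bd-part of $\esga$ annihilates every $x_l$ in the matched range. Because $\rng(x_k)\subset I_k$, all cross terms in the mt-part vanish, so pairing $(2m-1,2m)$ with alternating signs yields per-pair contribution
\[
\frac{1}{m_{2j-1}}\bigl[\esga[\eta_{2m-1}](x_{2m-1})-\tilde\lambda_{2m}\esga[\eta_{2m}](x_{2m})\bigr].
\]
A direct computation using $\mt(\esga[\eta_{2m}])=m_{l_{2m}}^{-1}\sum_r d^{*}_{\beta_r}$ and $x_{2m}=m_{l_{2m}}n_{l_{2m}}^{-1}\sum_r d_{\beta_r}$, with bd- and mt-supports disjoint, gives $\esga[\eta_{2m}](x_{2m})=1$, while Definition~\ref{dep-def}(3) combined with the approximation of $\tilde\lambda_{2m}$ gives $\vert\tilde\lambda_{2m}-\esga[\eta_{2m-1}](x_{2m-1})\vert\leq 1/(2n_{q_{2m-1}}^2)$. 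Hence each pair contributes $O(m_{2j-1}^{-1}n_{q_{2m-1}}^{-2})$, and summing in $m$ yields a total matched contribution of order $1/m_{2j-1}$.

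For the unmatched part $k>i_0$, the mt-contribution reduces, for each fixed $x_l$, to a sum $\sum_{k>i_0}\tilde\epsilon_k\tilde\lambda_k\esga[\tilde\eta_k]P_{\tilde I_k}(x_l)$ with weights $\w(\tilde\eta_k)\neq m_{2j-1}^{-1}$ strictly increasing. Applying Corollary~\ref{diffw} for odd RIS-averages $x_l$ and Corollary~\ref{diffwb} for even basis-averages $x_l$ gives a bound proportional to $1/m_{l_1}$, which is tiny because $m_{l_1}\geq n_{2j-1}^2$ by Definition~\ref{dep-def}(2); the bd-contribution in the tail is controlled analogously by viewing the bd-nodes as functionals with distinct ranks and invoking Lemma~\ref{basisest}. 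Summing the matched and unmatched contributions yields $\vert\esga(\sum_{i\in J}(-1)^{i+1}x_i)\vert\leq C_0$ for an absolute $C_0$, and Lemma~\ref{p56}(b) then delivers $\|\tfrac{1}{n_{2j-1}}\sum(-1)^{i+1}x_i\|\leq 10C_0/m_{2j-1}^2\leq 250/m_{2j-1}^2$ for a suitable constant. The main obstacle is the unmatched tail: estimates must be applied at two levels at once (the special tree of $\gamma$ and the RIS building each odd $x_{2m-1}$), and because the bd-part of $\esga$ need not vanish there one must combine a projection argument with the weight-discrepancy bounds of Section~3 and the RIS bounds of Section~5.
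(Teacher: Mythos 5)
Your overall strategy is the same as the paper's: reduce to Lemma~\ref{p56}(b) with $j_0=2j-1$, locate the last index $i_0$ at which the special sequence underlying $\gamma$ agrees with the one underlying the dependent sequence, kill the matched pairs using the $\tfrac{1}{4n_q^2}$-approximation of $\tilde\lambda_{2m}$ together with $\esga[\eta_{2m}](x_{2m})=1$, and control the unmatched tail by the weight-discrepancy Corollaries~\ref{diffw} and~\ref{diffwb}. However, there are two concrete flaws. First, your claim that ``for every $k>i_0$ the weight $\w(\tilde\eta_k)$ differs from $\w(\eta_k)$'' is false at $k=i_0+1$: both weights equal $m_{4\sigma((\gamma_p,\bar x_p)_{p=1}^{i_0})}^{-1}$, since they are determined by the \emph{common} initial segment of length $i_0$. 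Consequently Corollary~\ref{diffw} (which requires $m_{l_p}\neq m_j$) cannot be applied to the single term $\esga[\tilde\eta_{i_0+1}]P_{\tilde I_{i_0+1}}(x_{i_0+1})$; that term has to be bounded trivially by $\norm[\esga[{}]\circ P_I]\cdot\norm[x_{i_0+1}]$ (this is exactly the separate estimate $\le 4$, resp.\ $\le 7$, in the paper's \eqref{eq:3'}). Injectivity of $\sigma$ only gives the weight disjointness once at least one of the two indices exceeds $i_0+1$. As written, your tail estimate omits the one term that is not small.

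Second, your verification that $(x_i)_i$ is a RIS via Proposition~\ref{LocWeightRIS} does not work: the odd terms $x_{2m-1}$ are weighted averages of RIS taken in an arbitrary block subspace, so there is no control whatsoever on the weights of the nodes in their \emph{local support}; the assertion that these local supports are ``controlled by $m_{l_{2m-1}}^{-1}$'' is unjustified. The RIS property (with a universal multiple of $C$) is true, but it must be checked directly: Lemma~\ref{p56}(a) gives $|\esga(x_{2m-1})|\le 32Cm_i^{-1}$ for $\w(\gamma)=m_i^{-1}$ with $i<l_{2m-1}$, and Lemma~\ref{basisest} gives the analogous bound $7m_i^{-1}$ for the even basis averages. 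With these two repairs your argument coincides with the paper's proof (which splits $e^*_\beta$ into $e^*_{\beta_{k_0-1}}$ and $e^*_\beta-e^*_{\beta_{k_0-1}}$, obtaining the bounds $10$ and $15$, hence the constant $10\cdot 25=250$).
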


\begin{proof} Let $J$ be an interval of $\{1,\dots, n_{2j-1}\}$ and $z=\sum_{i\in J}(-1)^{i+1}x_{i}$. We shall verify the assumption (b) in Lemma~\ref{p56a} for $j_{0}=2j-1$. 

Let $(\gamma_{k},\bar{x}_{k})_{k=1}^{n_{2j-1}}$ be the special sequence associated with the dependent sequence $(\gamma_{k},x_{k})_{k=1}^{n_{2j-1}}$, $\gamma_{k}=(q_{k}+1,\gamma_{k-1},m_{2j-1},I_{k},\epsilon_{k},\lambda_{k}e^{*}_{\eta_{k}})$ for each $k$, where $\gamma_0=0$.

Consider a node $\beta$ with evaluation analysis
$$
e^{*}_{\beta}=\sum_{i=1}^{n_{2j-1}}d^{*}_{\xi_{i}}+m_{2j-1}^{-1}\sum_{i=1}^{n_{2j-1}/2}(\tilde{\epsilon}_{2i-1}e^{*}_{\tilde{\delta}_{2i}-1}P_{\tilde{I}_{2i-1}}+\tilde\lambda_{2i}e^{*}_{\tilde{\delta}_{2i}}P_{\tilde{I}_{2i}} )
$$
which is produced from a $(\Gamma, j)$-special sequence $(\zeta_{k},\bar{z}_{k})_{k\leq n_{2j-1}}$.
Let
$$
k_{0}=\min\{k\leq n_{2j-1}: (\gamma_{k},\bar{x}_{k})\ne (\zeta_{k},\bar{z}_{k})\}
$$
if such a $k$ exists. We estimate separately $|e^{*}_{\beta_{k_0-1}}(z)|$ and $|(e^{*}_{\beta}-e^{*}_{\beta_{k_0-1}})(z)|$.

We start with $|e^{*}_{\beta_{k_0}-1}(z)|$. Notice that $e^{*}_{\beta_{k_0}-1}$, if $k_0>1$, has the following evaluation analysis
$$
e^{*}_{\beta_{k_0-1}}=\sum_{i=1}^{k_0-1}d^{*}_{\xi_{i}}+m_{2j-1}^{-1}\sum_{i=1}^{\lfloor 
(k_0-1)/2\rfloor}(\tilde{\epsilon}_{2i-1}e^{*}_{\tilde{\delta}_{2i-1}}P_{I_{2i-1}}+\tilde{\lambda}_{2i}e^{*}_{\eta_{2i}}P_{I_{2i}} 
)+[\tilde{\epsilon}_{k_0-1}e^{*}_{\tilde{\delta}_{k_0-1}}P_{I_{k_0-1}}].
$$
where $e^{*}_{\tilde{\delta}_{2i-1}}$ and $e^{*}_{\eta_{2i-1}}$ have compatible tree-analyses and the last term in square brackets appears if $k_0-1$ is odd. By the definition of nodes we have $\rank 
(\xi_{i})=\rank(\gamma_{i})\in(\max\rng_{\textrm{\tiny FDD}}(x_{i}),\min\rng_{\textrm{\tiny FDD}}(x_{i+1}))$ 
for every $i<k_0$. Therefore 
\begin{equation}\label{az0}
\left(\sum_{i=1}^{k_0-1}d^{*}_{\xi_{i}}\right) 
\sum_{i}(-1)^{i+1}x_{i}=0.
\end{equation}
We partition the indices $P=\{1,2,\dots,\lfloor (k_0-1)/2\rfloor\}$ into the sets $A=\{i\in P: e^{*}_{\tilde{\delta}_{2i-1}}P_{I_{2i-1}}(\bar{x}_{2i-1})\ne 0\}$ and its complement $B$.

For every $i\in A$ from the choice of $\tilde\lambda_{2i}$, the fact that $\rng(x_{2i-1})\subset I_{2i-1}$ and (3) of Def. \ref{dep-def} we have
\begin{align}\label{b15}
\vert\tilde\lambda_{2i}-\tilde{\epsilon}_{2i-1}e^{*}_{\tilde{\delta}_{2i-1}}(\bar{x}_{2i-1})\vert &
\leq \frac{1}{4n_{2j-1}^{2}}\quad\textrm{and}\quad 
\\
\vert\notag e^{*}_{\tilde\delta_{2i-1}}(\bar{x}_{2i-1})
-e^{*}_{\tilde\delta_{2i-1}}P_{I_{2i-1}}(x_{2i-1})\vert &=\vert e^{*}_{\tilde\delta_{2i-1}}(\bar{x}_{2i-1})-e^{*}_{\tilde\delta_{2i-1}}(x_{2i-1})\vert\leq \frac{1}{4n_{2j-1}^{2}}.
\end{align}
It follows that
\begin{align}\label{az2}
\abs{\tilde{\epsilon}_{2i-1}e^{*}_{\tilde{\delta}_{2i-1}}P_{I_{2i-1}}(x_{2i-1})+\tilde\lambda_{2i}e^{*}_{\eta_{2i}}P_{I_{2i}}(-x_{2i}) }
&=\abs{\tilde{\epsilon}_{2i-1}e^{*}_{\tilde{\delta}_{2i-1}}P_{I_{2i-1}}(x_{2i-1})-\tilde\lambda_{2i}}
\leq \frac{1}{2n_{2j-1}^{2}}\quad\textrm{by}\,\,\eqref{b15}.
\end{align}
Similarly for every $i\in B$,
\begin{align}
\label{az3}
\abs{\tilde{\epsilon}_{2i-1}e^{*}_{\tilde{\delta}_{2i-1}}P_{I_{2i-1}}(x_{2i-1})+
\tilde\lambda_{2i}e^{*}_{\eta_{2i}}P_{I_{2i}}(-x_{2i})}
&=\abs{\tilde{\epsilon}_{2i-1}e^{*}_{\tilde{\delta}_{2i-1}}P_{I_{2i-1}}(x_{2i-1})-\tilde\lambda_{2i}}
\leq \frac{1}{2n_{2j-1}^2}.
\end{align}

For an interval $J=[l,m]$ using that $\norm[x_{2i-1}] = \sfrac{1}{2}$, $\norm[x_{2i}]\leq 7$ (by Lemma \ref{basisest}) and inequalities \eqref{az0}, \eqref{az3} we obtain
$$
|e^{*}_{\beta_{k_0-1}}\left(\sum_{i\in J}(-1)^{i+1}x_{i}\right)|\leq 
10.
$$
Now we proceed to estimate $|(e^{*}_{\beta}-e^{*}_{\beta_{k_0-1}})(z)|$.

Observe that as $x_{2l-1}$ is a weighted average of a normalized C-RIS of length $n_{j_{2l-1}}$ we have
\begin{equation}
 \label{eq:2} 
\abs{\left(\sum_{i=k_0}^{n_{2j-1}}d^{*}_{\xi_{i}}\right)(x_{2l-1})
}
 \leq
 3n_{2j-1}c_{2i-1}C\frac{m_{j_{2l-1}}}{n_{j_{2l-1}}}\leq 
2m_{j_{2l-1}}^{-2}<n_{2j-1}^{-3}
\end{equation}
The same inequality holds also for the averages of the basis i.e.
\begin{equation}
 \label{eq:4}
\abs{\sum_{i=k_0}^{n_{2j-1}}d^{*}_{\xi_{i}}(x_{2l})}
 \leq
 n_{2j-1}\frac{m_{j_{2l}}}{n_{j_{2l}}}\leq 
m_{j_{2l}}^{-3}<n_{2j-1}^{-3}\,\,\,\ \forall l.
\end{equation}
We shall distinguish the cases when $k_0$ is odd or even. Assume first that $k_{0}=2i_{0}-1$ for some $i_0$.

Then for every $i<i_{0}$ and every $k>k_{0}$,
$$
(\tilde{\epsilon}_{2i-1}e^{*}_{\tilde{\delta}_{2i}-1}P_{\tilde{I}_{2i-1}}+\tilde\lambda_{2i}e^{*}_{\tilde{\delta}_{2i}}P_{\tilde{I}_{2i}})(x_{k})=0.
$$
From the injectivity of $\sigma$ it follows that $\w(e^{*}_{\tilde{\delta}_{2i-1}}),\w(e^{*}_{\tilde{\delta}_{2i}})\notin\{\w(e^{*}_{\eta_{i'}})\mid i'>i_{0}\}$ for every $i>i_{0}$. Hence by Corollary~\ref{diffw}, using that $|\tilde\lambda_{2i}|\leq 1$ and $c_{k}\leq 2$, we get for every odd $k>k_{0}$ the following
\begin{align}
 \label{eq:1'}
 \abs{\sum_{i\geq i_{0}}^{n_{2j-1}/2}(\tilde{\epsilon}_{2i-1}e^{*}_{\tilde{\delta}_{2i-1}}P_{\tilde I_{2i-1}}+&\tilde\lambda_{2i}e^{*}_{\tilde{\delta}_{2i}}P_{\tilde I_{2i}})(x_k)}\leq 64c_{k}C\w(\delta_{1})\leq 128C n_{2j-1}^{-2}.
\end{align}
Also from Corollary~\ref{diffwb} we obtain for every even $k>k_{0}$ the following
\begin{align}
 \label{eq:1a'}
 \abs{\sum_{i\geq i_{0}}^{n_{2j-1}/2}(\tilde{\epsilon}_{2i-1}e^{*}_{\tilde{\delta}_{2i-1}}P_{\tilde I_{2i-1}}+&\tilde\lambda_{2i}e^{*}_{\tilde{\delta}_{2i}}P_{\tilde I_{2i}})(x_k)}
\leq
14 n_{2j-1}^{-2}.
\end{align}

For $x_{k_0}$ we also obtain the following
\begin{align}
 \label{eq:3'}
 \vert
\sum_{i\geq i_{0}}^{n_{2j-1}/2}& (\tilde{\epsilon}_{2i-1}e^{*}_{\tilde{\delta}_{2i-1}}
P_{\tilde{I}_{2i-1}}+\tilde\lambda_{2i}e^{*}_{\tilde{\delta}_{2i}}P_{\tilde{I}_{2i}})(x_ { k_0})
\vert
 \\
&\leq
\abs{ e^{*}_{\tilde{\delta}_{k_{0}}}P_{\tilde{I}_{k_{0}}} (x_{k_{0}})}+\abs{\Big(\tilde\lambda_{k_{0}+1}e^{*}_{\tilde{\delta}_{k_{0}+1}}P_{\tilde{I}_{k_{0}+1}} + \sum_{i> i_{0}}^{n_{2j-1}/2} 
(\tilde{\epsilon}_{2i-1}e^{*}_{\tilde{\delta}_{2i-1}}
P_{\tilde{I}_{2i-1}}+\tilde\lambda_{2i}e^{*}_{\tilde{\delta}_{2i}}
P_{\tilde{I}_{2i}}
)\Big)(x_{k_{0}})}
\notag\\
&\leq 4+ 128C n_{2j-1}^{-2},
 \notag
\end{align}
using that $\norm[x_{k_0}]\leq 1$ and $\norm[\esga\circ P_{I}]\leq \norm[P_{I}]\leq 4$ while for the second term we get the upper bound as in \eqref{eq:1'}.

The case where $k_{0}$ is even is similar, except that $\abs{e^{*}_{\tilde{\delta}_{k_{0}}}P_{\tilde{I}_{k_{0}}} (x_{k_{0}})}\leq 7$.

Splitting $J$ to $J_{1}=J\cap [1,i_{0}]$, $J_{2}=J\cap(i_{0},n_{2j-1})$ and considering the cases when $\min J_{1}$ is odd or even we get $|( e^{*}_{\beta}-e^{*}_{\beta_{k_0-1}})\left(\sum_{i\in J}(-1)^{i+1}x_{i}\right)|\leq 15$, using that $n_{2j+1}>200C$.
\end{proof}

The lemmas above imply the following.
\begin{proposition}\label{spheres}
 Let $M\subset\N$ be infinite and $(y_{k})_{k}\subset \mathfrak{X}_{Kus}$ be a normalized block sequence. Then
 $$
 \inf\{\norm[x-y]: x\in \langle d_{\gamma_n} :n\in M \rangle, y\in \langle y_{k}:k\in\N\rangle , \norm[x]=\norm[y]=1\}=0.
 $$
\end{proposition}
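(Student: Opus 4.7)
The plan is to apply Lemma \ref{dep1} to produce, for each $j$, a $j$-dependent sequence $(\gamma_i,x_i)_{i=1}^{n_{2j-1}}$ (with some fixed constant $C$) whose odd terms lie in $\langle y_k:k\in\N\rangle$ and whose even terms lie in $\langle d_{\gamma_n}:n\in M\rangle$. Writing
\[
u_j = \frac{m_{2j-1}}{n_{2j-1}}\sum_{i\text{ odd}} x_i, \qquad v_j = \frac{m_{2j-1}}{n_{2j-1}}\sum_{i\text{ even}} x_i,
\]
we then have $u_j\in\langle y_k\rangle$ and $v_j\in\langle d_{\gamma_n}:n\in M\rangle$ by construction (since each $x_{2i}=R\bar{x}_{2i}$ is a linear combination of the chosen $d_{\gamma_n}$, $n\in M$).

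The key estimates are already in hand. From the paragraph following Lemma \ref{dep1}, evaluating a suitable $e^{*}_{\zeta_{n_{2j-1}}}$ yields
\[
\|u_j+v_j\| \;=\; \Big\|\tfrac{m_{2j-1}}{n_{2j-1}}\sum_{i=1}^{n_{2j-1}} x_i\Big\| \;\geq\; \tfrac{1}{45C},
\]
while Lemma \ref{dep2} gives
\[
\|u_j-v_j\| \;=\; \Big\|\tfrac{m_{2j-1}}{n_{2j-1}}\sum_{i=1}^{n_{2j-1}}(-1)^{i+1} x_i\Big\| \;\leq\; \tfrac{250}{m_{2j-1}}.
\]
Since $m_{2j-1}\to\infty$, for all $j$ large enough we have $\|v_j\|\geq\tfrac{1}{2}(\|u_j+v_j\|-\|u_j-v_j\|)\geq \tfrac{1}{200C}$, and similarly for $\|u_j\|$.

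Finally, from the standard estimate
\[
\Big\|\frac{u_j}{\|u_j\|}-\frac{v_j}{\|v_j\|}\Big\| \;\leq\; \frac{\bigl|\|u_j\|-\|v_j\|\bigr|+\|u_j-v_j\|}{\|v_j\|} \;\leq\; \frac{2\|u_j-v_j\|}{\|v_j\|} \;\leq\; \frac{100{,}000\,C}{m_{2j-1}} \xrightarrow[j\to\infty]{} 0,
\]
we obtain normalized vectors $x_j=u_j/\|u_j\|\in\langle y_k\rangle$ and $y_j=v_j/\|v_j\|\in\langle d_{\gamma_n}:n\in M\rangle$ with $\|x_j-y_j\|\to 0$, which is the desired conclusion.

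The only subtlety is verifying that Lemma \ref{dep1} can indeed be applied here, i.e.\ that both ingredients — normalized block vectors $(y_k)$ in $\mathfrak{X}_{Kus}$ and the basis subsequence $(d_{\gamma_n})_{n\in M}$ — fit into its hypotheses. For the former this is immediate; for the latter one passes to a further subsequence of $M$ so that the $d_{\gamma_n}$ are pairwise non-neighbours and have strictly separated ranks (so that the node from Lemma \ref{normnode}(b) exists), which is allowed by Lemma \ref{neighbours}. After this reduction everything else is an unwinding of the estimates above.
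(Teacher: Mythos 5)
Your argument is correct and is exactly the derivation the paper intends: it states the proposition with no written proof beyond ``The lemmas above imply the following,'' meaning precisely the combination of Lemma \ref{dep1}, the lower bound $\norm[\frac{m_{2j-1}}{n_{2j-1}}\sum_i x_i]\geq \sfrac{1}{45C}$ noted after it, and Lemma \ref{dep2} that you carry out. Your constants and the normalization estimate check out, and the subsequence reduction you mention at the end is already handled inside the proof of Lemma \ref{dep1}.
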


\section{Bounded operators on the space $\mathfrak{X}_{Kus}$}

In this section we show that the space $\mathfrak{X}_{Kus}$ has the scalar-plus-compact property.

\begin{proposition}\label{dist}
Let $T:\mathfrak{X}_{Kus}\to \mathfrak{X}_{Kus}$ be a bounded operator and $(d_{\gamma_{n}})_{n\in M}$ be a subsequence of the basis. Then 
$$
\lim_{M\ni n \to+\infty} \dist(Td_{\gamma_n},\R d_{\gamma_n})=0.
$$
\end{proposition}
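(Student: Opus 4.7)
The plan is to proceed by contradiction and exploit the dependent-sequence machinery of Section 6. Assuming the conclusion fails, fix $\delta>0$ and an infinite $M'\subset M$ with $\dist(Td_{\gamma_n},\mathbb{R}d_{\gamma_n})\geq\delta$ for $n\in M'$. Setting $\mu_n=d^{*}_{\gamma_n}(Td_{\gamma_n})$, the vectors $y_n=Td_{\gamma_n}-\mu_n d_{\gamma_n}$ satisfy $\|y_n\|\geq\delta$ and $\gamma_n\notin\supp y_n$, and $(\mu_n)$ is bounded. After passing to a subsequence with $\mu_n\to\mu$ and replacing $T$ by $T-\mu I$ (which does not affect the statement), I may assume $\mu_n\to 0$ and $\|Td_{\gamma_n}\|\geq\delta/2$. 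Since the basis is shrinking (Corollary \ref{shrinking}), $(Td_{\gamma_n})$ is weakly null; Bessaga--Pe\l czy\'nski combined with a gliding-hump perturbation relative to the FDD $(M_q)$ yields a further subsequence $M''$ along which $(Td_{\gamma_n})_{n\in M''}$ is close in norm to a seminormalized skipped block sequence $(w_n)_{n\in M''}$ with $\|w_n\|\geq\delta/4$. By Lemma \ref{neighbours} I further refine $M''$ so that $(\gamma_n)_{n\in M''}$ are pairwise non-neighbours.

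Fix $j$ large. By Lemma \ref{ris-exists}, pick a normalized $2$-RIS $(z_k)\subset\langle w_n:n\in M''\rangle$, and use Lemma \ref{dep1} to construct a $j$-dependent sequence $(\zeta_i,x_i)_{i\leq n_{2j-1}}$ with odd $x_{2i-1}$ being normalized RIS-averages of $(z_k)$ (lying in $\langle w_n\rangle$) and even $x_{2i}=(m_{4j_{2i}}/n_{4j_{2i}})\sum_{k}d_{\gamma_{n(i,k)}}$ basis-averages over indices in $M''$ (lying in $\langle d_{\gamma_n}\rangle$). By Lemma \ref{dep2},
\[
\Bigl\|\sum_{i=1}^{n_{2j-1}}(-1)^{i+1}x_i\Bigr\|\leq \frac{250\,n_{2j-1}}{m_{2j-1}^{2}},
\]
so, writing $U=\sum_{i\text{ odd}}x_{2i-1}\in\langle w_n\rangle$ and $V=\sum_{i\text{ even}}x_{2i}\in\langle d_{\gamma_n}\rangle$, we have $\|U-V\|\leq 250\,n_{2j-1}/m_{2j-1}^{2}$ and hence $\|TU-TV\|\leq 250\|T\|\,n_{2j-1}/m_{2j-1}^{2}$.

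Since $\mu_n\to 0$ and $Td_{\gamma_n}\approx w_n$ along $M''$, the image $TV$ is close to $\tilde V:=\sum_{i\text{ even}}(m_{4j_{2i}}/n_{4j_{2i}})\sum_k w_{n(i,k)}\in\langle w_n\rangle$ up to error which can be made negligibly small by passing to a suitable subsequence. A two-level norming-functional construction via Lemma \ref{normnode}---an inner regular node of weight $m_{4j_{2i}}^{-1}$ applied to each inner $w$-average (giving value $\gtrsim\delta$ using $\|w_n\|\geq\delta/4$), combined into an outer regular node of weight $m_{2j-1}^{-1}$---produces $\|\tilde V\|\geq c\delta\,n_{2j-1}/m_{2j-1}$ for an absolute $c>0$. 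Combining $\|TU\|\geq\|\tilde V\|-\|TU-TV\|-\|TV-\tilde V\|$ with the upper bound $\|TU\|\leq\|T\|\|U\|$, together with the RIS-upper estimate on $\|U\|$ obtainable from Lemma \ref{p56} in its refined form (which applies because $(x_{2i-1})_i$ is constructed inside the block subspace $\langle w_n\rangle$ with support disjoint from that of $\tilde V$), forces an inequality which fails as soon as $j$ is chosen large enough relative to $\|T\|/\delta$, yielding the sought contradiction.

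The main obstacle is the sharpness of the upper bound on $\|U\|$: the routine RIS bound of Lemma \ref{p56}(a) only yields $\|U\|=O(n_{2j-1}/m_{2j-1})$, which merely matches the order of $\|\tilde V\|$ and thus gives only a $j$-independent lower bound on $\|T\|$ rather than a contradiction. Obtaining the improved bound $\|U\|=O(n_{2j-1}/m_{2j-1}^{2})$---which is what actually closes the argument---requires invoking Lemma \ref{p56}(b) for weight-$m_{2j-1}$ functionals evaluated at $(x_{2i-1})_i$, and this in turn hinges on the delicate interaction between the special-node structure of the dependent sequence and the growth condition \eqref{coding growth} on the coding function. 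Careful tracking of these nested estimates, together with the approximation errors $Td_{\gamma_n}-w_n$ introduced at the perturbation step, is the core technical work.
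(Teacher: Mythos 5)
Your reversal of roles---putting the $\langle Td_{\gamma_n}\rangle$-averages at the odd positions of the dependent sequence and the basis averages at the even positions, then playing $\|TU-TV\|$ against a lower bound for $\|TV\|$---cannot be closed, and the obstacle you flag at the end is not a technical difficulty but an actual impossibility. The improved bound $\|U\|=O(n_{2j-1}/m_{2j-1}^{2})$ that your final inequality requires is false: the special node $\zeta_{n_{2j-1}}$ attached to your dependent sequence satisfies
$$
e^{*}_{\zeta_{n_{2j-1}}}(V)=\frac{1}{m_{2j-1}}\sum_{i}\lambda_{2i}\,e^{*}_{\eta_{2i}}(x_{2i})\geq \frac{n_{2j-1}}{80Cm_{2j-1}},
$$
since each $e^{*}_{\eta_{2i}}(x_{2i})=1$ and $\lambda_{2i}\geq 1/(40C)$ up to the net error; hence $\|V\|\gtrsim n_{2j-1}/m_{2j-1}$ and therefore $\|U\|\geq\|V\|-\|U-V\|\gtrsim n_{2j-1}/m_{2j-1}$ as well. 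No appeal to Lemma \ref{p56}(b) can beat this, so your scheme yields only a $j$-independent lower bound on $\|T\|$, exactly as you suspect, and never a contradiction. There is a second structural problem: the ``outer regular node of weight $m_{2j-1}^{-1}$'' you invoke to norm $\tilde V$ from below does not exist. In this construction regular nodes carry only even weights $m_{2j}^{-1}$, and odd weights belong exclusively to special nodes, whose even-position components are forced by Definition \ref{defspecial}(iv) to be averages of $d^{*}_{\beta_r}$'s; since your $w$-averages occupy the even positions, no admissible functional gives $\|\tilde V\|\gtrsim \delta\, n_{2j-1}/m_{2j-1}$, and an even-weight regular node large enough to accommodate $n_{2j-1}/2$ inner averages only produces the useless scale $n_{2j-1}/m_{2j}^{-1}\sim n_{2j-1}/m_{2j-1}^{2}$.

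The paper's proof rests on an asymmetry your construction lacks. It places the basis averages over $M$ at the \emph{odd} positions of the dependent sequence and builds the odd components $e^{*}_{\psi_{2i-1}}$ of the special functional from nodes $\epsilon_n e^{*}_{\eta_n}P_{I_n}$ norming the off-diagonal part of $Td_{\gamma_n}$, where crucially the interval $I_n$ excludes the coordinate $\gamma_n$ (it is $[\min\rng(Td_{\gamma_n}),n-1]$ or $[n+1,\max\rng(Td_{\gamma_n})]$). The same functional is therefore large on $Tx_{2i-1}$ yet acts on $x_{2i-1}$ itself only through bd-parts that vanish by the rank conditions; combined with the weight separation after the splitting index $i_0$, this yields $\|y\|\leq 420/m_{2j-1}^{2}$ for $y=n_{2j-1}^{-1}\sum_i x_{2i-1}$ while $\|Ty\|\geq\delta/(28m_{2j-1})$---the extra factor $1/m_{2j-1}$ that your arrangement cannot produce. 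A repair of your attempt would essentially amount to rebuilding it along these lines.
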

\begin{proof} Assume that $\dist(Td_{\gamma_n},\R d_{\gamma_n})>4\delta$ for infinitely many $n\in M$ and some $\delta>0$.

By Corollary \ref{shrinking} and Lemma \ref{neighbours} passing to a further subsequence and admitting a small perturbation we may assume that
\begin{enumerate}
\item[(P1)] ${\displaystyle (Td_{\gamma_n})_{n\in M}}$ is a skipped block sequence and setting $R_{n}$ to be the minimal interval containing $\rng( Td_{\gamma_n})$ and $\{n\}$ we have 
\[
\max\rank (R_{n}) +2 <\min\rank(R_{n+1}).
\]
\item[(P2)] no two elements of $(d_{\gamma_n})_{n\in M}$ are neighbours.
\end{enumerate}

By the assumption that $\dist(Td_{\gamma_n},\R d_{\gamma_n})>4\delta$ it follows that either 
\[
\norm[P_{n-1}Td_{\gamma_n}]\geq 2\delta \,\,\,\textrm{or}\,\,\,\ \norm[(I-P_n)Td_{\gamma_n} ]\geq 2\delta 
\]
(recall that $P_{m}$ denotes the canonical projection onto $\langle d_{\gamma_i}:i\leq m\rangle$, $m\in\N$).

Passing to a further subsequence we may assume that one of the two alternatives holds for any $n\in\N$. Let 
$$
q_{n}=
\begin{cases}
\max\rank(P_{n-1}Td_{\gamma_n} )\,\,\,\, &\textrm{in the first case}
\\
\max\rank((I-P_n)Td_{\gamma_n}) &\textrm{in the second case}.
\end{cases}
$$
In the first case we take $I_{n}=[\min\rng (Td_{\gamma_n}),n-1]$. Also
$P_{n-1}Td_{\gamma_n}=i_{q_{n}}(u_{n})$ where 
$u_{n}=r_{q_{n}}(P_{n-1}Td_{\gamma_n})$ and hence we may choose 
$\epsilon_{n}\in\{-1,1\}$ and 
$\eta_{n}\in\Gamma_{q_{n}}\setminus\Gamma_{\max\rank(R_{n-1})+1}$ such that 
\begin{equation}\label{bign}
\epsilon_{n}e^{*}_{\eta_n} P_{I_{n}}(T\dg[\gamma_{n}])=
\epsilon_{n}e^{*}_{\eta_n}(P_{n-1}T\dg[\gamma_{n}])=
\epsilon_{n}e^{*}_{\eta_n}(u_{n})
\geq \delta
\end{equation}
using that $2\delta\leq\norm[i_{q_{n}}(u_{n})]\leq 2\norm[u_n]$.

In the second case we take $I_{n}=[n+1,\max\rng (Td_{\gamma_{n}})]$. Also since 
$(I-P_n)Td_{\gamma_n}=i_{q_{n}}(u_{n})$ where 
$u_{n}=r_{q_{n}}((I-P_n)Td_{\gamma_n})$ we get $\epsilon_{n}\in\{-1,1\}$, 
$\eta_{n}\in\Gamma_{q_{n}}\setminus\Gamma_{\max\ran R_{n-1}+1}$ such that 
\begin{equation}\label{bign2}
\epsilon_{n}e^{*}_{\eta_n} P_{I_{n}} (T\dg[\gamma_{n}])=
\epsilon_{n}e^{*}_{\eta_n}((I-P_n)T\dg[\gamma_{n}])=
\epsilon_{n}e^{*}_{\eta_n}(u_{n})\geq \delta.
\end{equation}

Given any $j\in\N$ we shall build a vector $y$ with $\norm[Ty]\geq \delta/28m_{2j-1}$ and $\norm[y]\leq 420/m^2_{2j-1}$ which for sufficiently big $j$ yields a contradiction.

Assume the first case holds. The second case will follow analogously. Notice that by (P1) for any $i\in\N$ and $A\subset M$ with $\# A=n_{2i}$ and $\max\rank (R_{\min A})\geq 2i-1$ there is a functional $e^{*}_\psi$ associated to a regular node of the form
$$
e^{*}_\psi=\sum_{n\in A}d^{*}_{\xi_n}+\frac{1}{m_{2i}}\sum_{n\in A}\epsilon_ne^{*}_{\eta_n}P_{I_{n}}.
$$
with $\rank(\xi_n)=\max\rank (R_n)+1$ for each $n\in A$. Let $x=m_{2i}n_{2i}^{-1}\sum_{n\in A}d_{\gamma_n}$.

It follows that 
\begin{align*}\label{eqq2}
\norm[Tx]&\geq e^*_\psi(Tx)=\left(\sum_{n\in A}d^{*}_{\xi_n}+ 
\frac{1}{m_{2i}}\sum_{n\in A}\epsilon_ne^{*}_{\eta_n}P_{I_{n}}\right)
\left(\frac{m_{2i}}{n_{2i}}\sum_{n\in A}Td_{\gamma_n}\right)
\\
&=m_{2i}n_{2i}^{-1}\sum_{n\in A}d^{*}_{\xi_n}(Td_{\gamma_n})+\frac{1}{n_{2i}}\sum_{n\in A}\epsilon_ne^{*}_{\eta_n}P_{I_{n}}(Td_{\gamma_n})
\\
&=\frac{1}{n_{2i}}\sum_{n\in A}\epsilon_ne^{*}_{\eta_n}P_{k_{n-1}}(Td_{\gamma_n})\geq \delta.
\end{align*}

Fix $j\in\N$ and choose inductively, as in Lemma \ref{dep1}, a $j$-dependent sequence $(\zeta_{i},x_{i})$, $\zeta_i=(q_i+1,\zeta_{i-1},m_{2j-1},J_i,1,\psi_i)$, $i=1,\dots, n_{2j-1}$, with $\zeta_0=0$, with respect to a $(\Gamma, j)$-special sequence $(\zeta_i,\bar{x}_i)$, so that it satisfies for any $i$ the following
$$
e^{*}_{\psi_{2i-1}}=\sum_{n\in A_i}d^{*}_{\xi_n}+\frac{1}{m_{j_{2i-1}}}\sum_{n\in A_i}\epsilon_ne^{*}_{\eta_n}P_{I_n},\qquad x_{2i-1}=\frac{c_{2i-1}m_{j_{2i-1}}}{n_{j_{2i-1}}}\sum_{n\in A_i}d_{\gamma_n},\,\,\norm[x_{2i-1}]=\sfrac{1}{2}
$$ 
with $\rank(\xi_n)=\max\rank(R_n)+1$ for each $n\in\cup_iA_i$. Lemma~\ref{basisest} yields that $\sfrac{1}{14}\leq c_{2i-1}\leq 1$. Recall that by definition each vector $\bar{x}_{2i-1}$ satisfies 
$$
\vert\esga(\bar{x}_{2i-1})-\esga(x_{2i-1})\vert\leq 4n_{q_{2i-1}}^{-2}\,\,\,\forall\gamma\in\Gamma. 
$$
For any $i$ let $J_{2i-1}=\rng (e^{*}_{\psi_{2i-1}})$. We demand also that $\supp e^{*}_{\psi_{2i}}\cap \supp x_{2k-1}=\emptyset$ for any $i,k$, thus the even parts of the chosen special functional play no role in the estimates on the weighted averages of $(x_{2i-1})$. We assume also $m_{j_1}/m_{j_1+1}\leq 1/n_{2j-1}^2$.

By the previous remark we have for each $i$ the following
\begin{equation}\label{eqq3}
\egs[\psi_{2i-1}](Tx_{2i-1})\geq \delta/14.
\end{equation}
Let
$$
y=\frac{1}{n_{2j-1}}\sum_{i=1}^{n_{2j-1}/2}x_{2i-1}
=\frac{1}{n_{2j-1}}\sum_{i=1}^{n_{2j-1}/2}c_{2i-1}\frac{m_{j_{2i-1}} }{n_{j_{2i-1}}}\sum_{n\in 
A_i}d_{\gamma_n}
$$
and consider the functional associated to the special node $\zeta_{n_{2j-1}}$, i.e. of the form
$$
e^{*}_{\zeta_{n_{2j-1}}}=\sum_{i=1}^{n_{2j-1}}d^{*}_{\zeta_{i}}+\frac{1}{m_{2j-1}} \sum_{i=1}^{n_{2j-1}/2}(\egs[\psi_{2i-1}]P_{J_{2i-1}}+\lambda_{2i}\egs[\psi_{2i}]P_{J_{2i}}).
$$
Then
\begin{align*}
&\norm[Ty]\geq e^{*}_{\zeta_{n_{2j-1}}}(Ty)
\\
&=\left(
\sum_{i=1}^{n_{2j-1}}d^{*}_{\zeta_{i}}+
\frac{1}{m_{2j-1}}
\sum_{i=1}^{n_{2j-1}/2}(\egs[\psi_{2i-1}]P_{J_{2i-1}}+
\lambda_{2i}\egs[\psi_{2i}]P_{J_{2i}}
)
\right)
(\frac{1}{n_{2j-1}}\sum_{i=1}^{n_{2j-1}/2}Tx_{2i-1})=\dots
\end{align*}
Notice that $J_{2i}\cap \Gamma_{\rank (\phi_{2i-1})}=\emptyset$, whereas by the choice of $R_n$ and the node $\phi_{2i-1}$ we have $\rng (Tx_{2i-1})\subset\Gamma_{\rank (\phi_{2i-1})}$. Therefore
\begin{align*}
&\dots=\left(
\sum_{i=1}^{n_{2j-1}}d^{*}_{\zeta_{i}}
\right)
(
\frac{1}{n_{2j-1}}\sum_{i=1}^{n_{2j-1}/2}Tx_{2i-1})+\frac{1}{n_{2j-1}m_{2j-1}}
\sum_{i=1}^{n_{2j-1}/2}
\egs[\psi_{2i-1}]P_{J_{2i-1}}(Tx_{2i-1})
\end{align*}
where in the last line the first sum disappears by the choice of $(q_{2i-1})$, as $\rank(\bdp(e^{*}_{\zeta_{n_{2j-1}}}))\cap\rank(Tx_{2i-1})=\emptyset$ for any $i$. Therefore we have
\begin{equation}\label{ty}
 \norm[Ty]\geq \frac{\delta}{28m_{2j-1}}.
\end{equation}

\

On the other hand we estimate $\norm[y]$. We shall prove that  $\norm[y]\leq 420/m^2_{2j-1}$ yielding for sufficiently big $j$ a contradiction. By (P2) and Lemma \ref{basisest} we get that $(x_{i})$ is 7-RIS. By Lemma~\ref{p56} it is enough to estimate $|e^{*}_{\beta}(z)|$, where $e^{*}_{\beta}$ is associated to a $(\Gamma, j)$-special sequence $(\delta_{i},\bar{z}_{i})_{i=1}^{a}$, and $z=\sum_{i\in J}x_{2i-1}$ for some interval 
$J\subset\{1,\dots,n_{2j-1}\}$. 

Let $e^{*}_{\beta}$ have the following form
$$
e_{\beta}^{*}=\sum_{i=1}^{a} 
d_{\tilde{\zeta}_i}^{*}+\frac{1}{m_{2j-1}}\sum_{i=1}^{\lfloor 
a/2\rfloor}(\tilde{\epsilon}_{2i-1}e_{\tilde{\psi}_{2i-1}}^{*} 
P_{\tilde{J}_{2i-1}}+\tilde{\lambda}_{2i}e^{*}_{\tilde{\psi}_{2i}} 
P_{\tilde{J}_{2i}})+\left[\tilde{\epsilon}_ae^*_{\psi_a}P_{\tilde{J}_a}\right]
$$
with $a\leq n_{2j-1}$, where the last term appears if $a$ is odd. Let $i_0=\max\{i\leq a: (\zeta_{i},\bar{x}_{i})=(\delta_{i},\bar{z}_{i})\}$ if such $i$ exists. We estimate $|e^{*}_{\beta}(z)|$ assuming $i_0$ is well-defined. We estimate separately $\abs{\sum_{i=1}^ad^{*}_{\tilde{\zeta}_i}(z)}$, $\abs{\mt (e^{*}_{\tilde{\zeta}_{i_0}})(z)}$ and $\abs{(\mt(e^{*}_{\beta})-\mt 
(e^{*}_{\tilde{\zeta}_{i_0}}))(z)}$. 

First notice that taking into account coordinates of $z$ with respect to the basis $(d_\gamma)$ and that $c_{2i-1}\leq 1$, we have 
\begin{equation}\label{op1}
\abs{\sum_{i=1}^ad^{*}_{\tilde{\zeta}_i}(z)}\leq 
n_{2j-1}\frac{m_{j_1}}{n_{j_1}} .
\end{equation}
Now consider the tree-analysis of $e^{*}_{\tilde{\zeta}_{i_0}}$, recall that it is compatible with the tree-analysis of $e^{*}_{\zeta_{i_0}}$. Then by the definition of a special node we have 
$$
\mt(e^{*}_{\zeta_{i_0}})=
\begin{cases}
\frac{1}{m_{2j-1}}\sum_{i=1}^{i_0/2}(\tilde{\epsilon}_{2i-1}\egs[\tilde{\psi}_{2i-1}]P_{J_{2i-1}}+\tilde{\lambda}_{2i}\egs[\psi_{2i}]P_{J_{2i}}) \quad 
&\textrm{if } i_0 \text{ even}\\
\frac{1}{m_{2j-1}}\sum_{i=1}^{\lfloor i_0/2\rfloor}(\tilde{\epsilon}_{2i-1}\egs[\tilde{\psi}_{2i-1}]P_{J_{2i-1}}
+\tilde{\lambda}_{2i}\egs[\psi_{2i}]P_{J_{2i}})+\tilde{\epsilon}_{i_0}\egs
[
\tilde{\psi}_{i_0}]P_{J_{i_0}} \quad &\textrm{if } i_0 \text{ odd}
\end{cases}
$$
where for each $2i-1\leq i_0$ we have
$$
e^{*}_{\tilde{\psi}_{2i-1}}=\sum_{n\in A_i}d^{*}_{\tilde{\xi}_n}+\frac{1}{m_{j_{2i-1}}}\sum_{n\in A_i}\tilde{\epsilon}_ne^{*}_{\tilde{\eta}_n}P_{I_n}.
$$
 
Notice that as $M\cap I_n=\emptyset$ for any $n$ and by the choice of $e^{*}_{\psi_{2i}}$ and ranks of $\xi_n$, thus also ranks of $\tilde{\xi}_n$, we get, assuming that $i_{0}$ is even,
\begin{align}\label{op2}
|\mt(e^{*}_{\zeta_{i_0}})(z)|&= 
|
\frac{1}{m_{2j-1}}\sum_{i=1}^{i_0/2}(\tilde{\epsilon}_{2i-1}\egs[\tilde{\psi}_{2i-1}]P_{J_{2i-1}}+\tilde{\lambda}_{2i}\egs[\psi_{2i}]P_{J_{2i}})(z)|
\\
&=|(\frac{1}{m_{2j-1}}\sum_{i=1}^{i_{0}/2}
\tilde{\epsilon}_{2i-1}\sum_{n\in A_i}d^{*}_{\tilde{\xi}_n})(\sum_{2i-1\in J}c_{2i-1}\frac{m_{j_{2i-1}} }{n_{j_{2i-1}}}\sum_{n\in A_i}d_{\gamma_n})|=0.
\notag
\end{align}
The same holds if $i_{0}$ is odd.

Now consider $\mt(e^{*}_{\beta})-\mt(e^{*}_{\zeta_{i_0}})$ assuming that $i_0<a$. Notice that 
\begin{enumerate}
\item $\w(\psi_s)\neq \w(\tilde{\psi}_i)$ for each $s,i>i_0$ provided at least one of the indices $s,i$ is bigger than $i_0+1$, 
\item  $(\mt(e^{*}_{\beta})-\mt(e^{*}_{\zeta_{i_0}}))(x_{2k-1})=0$ for any $2k-1\leq 
i_0$.
\end{enumerate}
Using Corollary~\ref{diffwb} for the terms $\sum_{i=i_0+1}^a\abs{e^{*}_{\tilde{\psi}_i}P_{\tilde{J}_i}(x_{2k-1} )}$ and that $|e^{*}_{\tilde{\psi}_{i_0+1}}P_{\tilde{J}_{i_0+1}}(x_{i_0+1})|\leq 4$, it follows that 
\begin{align}\label{op3}
\abs{(\mt(e^{*}_{\beta})-\mt(e^{*}_{\zeta_{i_0}}))(z)}
 &\leq 
\frac{1}{m_{2j-1}}\sum_{i=i_0+1}^a\sum_{2k-1=i_0+1}^{n_{2j-1}}\abs{e^{*}_{ \tilde{\psi}_i}P_{\tilde{J}_i}(x_{2k-1})}
 \\
&\leq
\frac{4}{m_{2j-1} }+
\frac{1}{m_{2j-1}}
n_{2j-1}
\frac{14}{m_{j_{i_{0}+1}}}
\leq
\frac{5}{m_{2j-1}}.
\notag
\end{align}

Therefore by \eqref{op1}, \eqref{op2}, \eqref{op3} and the choice of $j_1$ we have $\abs{e^{*}_{\beta}(z)}\leq 6/m_{2j-1}$, thus we can apply Lemma \ref{p56} obtaining that $\norm[y]\leq 60\cdot 7/m^{2}_{2j-1}$. For sufficiently big $j$ we obtain contradiction with \eqref{ty} and boundedness of $T$. 
\end{proof}

\begin{proposition} \label{d-to-RIS}
 Let $T: \mathfrak{X}_{Kus}\to \mathfrak{X}_{Kus}$ be a bounded operator. If $Td_{\gamma_n} \to 0$, then $Ty_n \to 0$ for every RIS $(y_n)_n$.
\end{proposition}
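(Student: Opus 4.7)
The plan is to proceed by contradiction, closely adapting the scheme used in the proof of Proposition~\ref{dist}. Assume there exist a $C$-RIS $(y_k)_k\subset\mathfrak{X}_{Kus}$ and $\delta>0$ with $\|Ty_k\|\geq 4\delta$ for every $k$. I will build, for each sufficiently large $j$, a vector $v_j\in\mathfrak{X}_{Kus}$ satisfying $\|Tv_j\|\gtrsim \delta/m_{2j-1}$ while $\|v_j\|\lesssim 1/m_{2j-1}^2$; this forces $\|T\|\geq \|Tv_j\|/\|v_j\|\to\infty$, contradicting the boundedness of $T$.

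First, by shrinkingness of $(d^*_\gamma)$ (Corollary~\ref{shrinking}) and a standard small-norm perturbation, after passing to a subsequence I may assume that $(Ty_k)$ is a skipped block sequence; let $R_k$ be the minimal FDD-interval containing both $\rng_{FDD}(y_k)$ and $\rng_{FDD}(Ty_k)$, arranged so that $\max\rank R_k+2<\min\rank R_{k+1}$. Using $\|i_q\|\leq 2$ and $\|Ty_k\|\geq 4\delta$, I choose $I_k=(p_k,q_k]\subset R_k$, a node $\eta_k\in\Gamma_{q_k}\setminus\Gamma_{p_k}$ and a sign $\varepsilon_k$ with $\varepsilon_k e^*_{\eta_k}P_{I_k}(Ty_k)\geq\delta$, with the essential extra requirement that $I_k\cap\rng_{FDD}(y_k)=\emptyset$. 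This disjointness is exactly where the hypothesis $Td_{\gamma_n}\to 0$ enters: because each $y_k$ is a controlled combination of basis vectors whose images have norms tending to zero, the restriction of $Ty_k$ to $\rng_{FDD}(y_k)$ becomes asymptotically negligible compared with $\delta$, so the witness of $\|Ty_k\|$ can be placed in $R_k\setminus\rng_{FDD}(y_k)$.

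For a fixed large $j$ I then mimic Lemma~\ref{dep1}, substituting $(y_k)$ for the basis subsequence, to produce a $j$-dependent sequence $(\gamma_i,x_i)_{i=1}^{n_{2j-1}}$: the odd terms are RIS-averages $x_{2i-1}=c_{2i-1}m_{l_{2i-1}}n_{l_{2i-1}}^{-1}\sum_{k\in A_i}y_k$ normalized to $\|x_{2i-1}\|=\tfrac12$, the even $x_{2i}$ are basis averages, and by Lemma~\ref{normnode} the odd witnessing regular functional is
\[
\psi_{2i-1}=\sum_{k\in A_i}d^*_{\xi_k}+m_{l_{2i-1}}^{-1}\sum_{k\in A_i}\varepsilon_k e^*_{\eta_k}P_{I_k}
\]
with $\rank\xi_k=\max\rank R_k+1$. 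The two key estimates are $\psi_{2i-1}(Tx_{2i-1})\geq\delta/(40C)$ and, crucially, $\psi_{2i-1}(x_{2i-1})=0$, the latter being the cancellation afforded by $I_k\cap\rng_{FDD}(y_k)=\emptyset$. Setting $v_j=n_{2j-1}^{-1}\sum_{i=1}^{n_{2j-1}/2}x_{2i-1}$ and applying the special functional $e^*_{\zeta_{n_{2j-1}}}$ of the dependent sequence to $Tv_j$, the computation from Proposition~\ref{dist} carries over verbatim and yields $\|Tv_j\|\geq e^*_{\zeta_{n_{2j-1}}}(Tv_j)\geq \delta/(K_1m_{2j-1})$: the basis part vanishes by the rank separation of the $\zeta_i$, and the even mt-parts vanish by the disjointness of supports built into the $\psi_{2i}$.

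For the upper bound on $\|v_j\|$, I first verify that $(x_{2i-1})_i$ is a $C'$-RIS by iterating Lemma~\ref{p56}(a), and then apply Lemma~\ref{p56}(b) after bounding $|e^*_\beta(\sum_{i\in J}x_{2i-1})|$ uniformly over intervals $J\subset\{1,\ldots,n_{2j-1}\}$ and weight-$m_{2j-1}^{-1}$ functionals $e^*_\beta$; by construction of $\mathfrak{X}_{Kus}$ these are precisely the special nodes, so one proceeds as at the end of the proof of Proposition~\ref{dist}, splitting $e^*_\beta$ at the maximal index $i_0$ of agreement with the dependent sequence — the matched contribution vanishes thanks to $\psi_{2i-1}(x_{2i-1})=0$, while the non-matched one is controlled by the injectivity of $\sigma$ together with Corollaries~\ref{diffw} and~\ref{diffwb} — giving $\|v_j\|\leq K_3/m_{2j-1}^2$ and hence the contradiction. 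The hard part of the whole argument is precisely the construction of $I_k$ disjoint from $\rng_{FDD}(y_k)$ in the first step: the hypothesis $Td_{\gamma_n}\to 0$ is used decisively there, and the fact that no such disjointness is achievable for $T=\mathrm{Id}$ (which does not satisfy the hypothesis, since $\mathrm{Id}\,d_{\gamma_n}=d_{\gamma_n}\not\to 0$) is exactly why the argument correctly fails in that case.
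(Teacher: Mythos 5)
There is a genuine gap at the very step you identify as ``the hard part'': the claim that, because $Td_{\gamma_n}\to 0$, the restriction of $Ty_k$ to $\rng_{FDD}(y_k)$ becomes negligible, so that the witnessing interval $I_k$ can be chosen disjoint from $\rng_{FDD}(y_k)$. Writing $y_k=\sum_{\gamma\in F_k}a_\gamma d_\gamma$, the only bound this hypothesis gives is $\norm[P_{\rng_{FDD}(y_k)}Ty_k]\leq\sum_{\gamma\in F_k}|a_\gamma|\,\norm[Td_\gamma]$, and this is useless: for a normalized RIS vector the $\ell_1$-mass $\sum_\gamma|a_\gamma|$ of the coordinates is unbounded (already for the averages $m_jn_j^{-1}\sum_i d_{\xi_i}$ it is of order $m_j$), so $Td_{\gamma_n}\to 0$ gives no control whatsoever on $P_{\rng_{FDD}(y_k)}Ty_k$. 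Worse, the scenario your disjointness claim silently excludes --- namely $Ty_k\approx y_k$ for a RIS $(y_k)$ while still $Td_{\gamma_n}\to 0$ --- is precisely the hardest case the proposition must rule out; in that scenario no subsequence or small perturbation can produce $I_k\cap\rng_{FDD}(y_k)=\emptyset$ together with $\varepsilon_k e^{*}_{\eta_k}P_{I_k}(Ty_k)\geq\delta$, so the argument is circular exactly where the real difficulty lies.

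The paper handles this by a dichotomy that your proposal does not contain. In Case 1 there exist $c>0$ and nodes $\varphi_n$ having compatible tree-analyses with the witnesses $\psi_n$ of $\norm[Ty_n]$ and satisfying $|e^{*}_{\varphi_n}(y_n)|>c$ (this is the ``$T$ looks like the identity on the $y_n$'' situation). One then builds a $j$-dependent sequence whose odd terms are averages of the $y_n$ normed via the $\varphi_n$ and whose \emph{even} terms are averages of basis vectors $d_{\gamma_{k_n}}$ chosen with $\norm[Td_{\gamma_{k_n}}]\leq 2^{-n}$ --- this is where the hypothesis $Td_{\gamma_n}\to 0$ actually enters --- and invokes Lemma \ref{dep2}: the alternating sum $x-d$ of the odd and even averages has norm $O(m_{2j-1}^{-2})$, while $\norm[Td]$ is tiny and $\norm[Tx]\geq\delta/40C$ via a second special node built from the $\psi_n$ and compatible with the same special sequence; this contradicts boundedness of $T$. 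In Case 2 no such $\varphi_n$ exist, and then every functional occurring in the matched part $\mt(e^{*}_{\tilde{\zeta}_{i_0}})$ of a competing special node acts on $y_n$ with value at most $n_{2j-1}^{-1}m_k^{-1}$; it is this quantitative smallness, not any disjointness of supports, that replaces the exact cancellation of Proposition \ref{dist} in the upper estimate of the norm of the averaged vector. Your proposal covers neither the dichotomy nor the alternating-sequence mechanism, and mis-attributes the source of smallness in the remaining case, so it does not prove the statement.
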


\begin{proof}
Take $T: \mathfrak{X}_{Kus}\to \mathfrak{X}_{Kus}$ with $Td_{\gamma_n}\to 0$ and suppose there are a normalized $C$-RIS $(y_n)_n$ and $\delta>0$ such that $\|Ty_n\| > \delta$ for all $n\in\N$. Passing to a subsequence we may assume as in the proof of Prop. \ref{dist} that
\[
 \max\rank R_n + 2 < \min\rank R_{n+1}\,\,\, \text{ where } 
R_n=\rng(Ty_n)\cup\rng(y_n).
 \]
Pick $(\mu_n)\subset\{\pm 1\}$ and nodes $(\psi_n)$ with $\mu_n e^{*}_{\psi_n}(Ty_n) > \delta$.

\textbf{Case 1}. There exist a constant $c>0$, an infinite set $M\subset\N$ and nodes $(\varphi_n)_{n\in M}$ such that $|e^{*}_{\varphi_n}(y_n)| > c$ and $e^{*}_{\varphi_n},e^{*}_{\psi_n}$ have compatible tree-analyses.

Pick signs $(\nu_n)_{n\in M}$ with $\nu_ne^{*}_{\varphi_n}(y_n)=|e^{*}_{\varphi_n}(y_n)| > c$ for each $n$. We may pass to a subsequence $(\gamma_{k_n})_n$ of $(\gamma_n)_n$ so that $\|Td_{\gamma_{k_n}}\|\leq 2^{-n}$ for all $n$. For a fixed $j\in\N$, $n_{2j+1}>200C$, we pick, as in Lemma \ref{dep1}, a $j$-dependent sequence $(\zeta_{i},x_{i})_i$ where $\zeta_{i}=(q_{i}+1,\zeta_{i-1},m_{2j-1},J_{i},1,\eta_{i})$, $i=1,\dots, n_{2j-1}$, with $\zeta_0=0$, satisfies
$$
\mt(e^{*}_{\eta_{2i-1}})=\frac{1}{m_{j_{2i-1}}}\sum_{n \in A_{2i-1}} \nu_n e^{*}_{\varphi_n}P_{I_n},\quad x_{2i-1}=\frac{c_{2i-1}m_{j_{2i-1}}}{n_{j_{2i-1}}}\sum_{n\in A_{2i-1}}y_n, \  \norm[x_{2i-1}]=\sfrac{1}{2}, 
$$
where $I_n=[\min R_n,\max R_n]$, so that the functional associated to the special node $\zeta_{n_{2j+1}}$ with mt-part of the form
$$
\mt(e^{*}_{\zeta_{n_{2j-1}}})=\frac{1}{m_{2j+1}}\sum_{i=1}^{n_{2j-1}/2} 
\left(e^{*}_{\eta_{2i-1}}P_{J_{2i-1}}+\lambda_{2i} e^{*}_{\eta_{2i}}P_{J_{2i}}\right),
$$
satisfies $J_{2i-1}\supset\rng (Tx_{2i-1})$, $J_{2i}\cap \rng (Tx_{2k-1})=\emptyset$ and $\rank (\bdp(e^{*}_{\zeta_{n_{2j+1}}}))\cap \rank (Tx_{2i-1})=\emptyset$ for any $i,k$.

From  Remark~\ref{rem6.3} we get
$$
\sfrac{1}{20C}\leq c_{2i-1}\leq 2.
$$
Using gaps between sets $R_n$ we pick nodes $(\xi_{2i-1})_{2i-1\leq n_{2j+1}}$, with
 \[
 \mt(e^{*}_{\xi_{2i-1}})=\frac{1}{m_{j_{2i-1}}}\sum_{n \in A_{2i-1}} \mu_n e^{*}_{\psi_n}P_{I_n}.
 \]
It follows that $e^{*}_{\xi_{2i-1}}(Tx_{2i-1}) > \sfrac{\delta}{20C}$ for each $i$. 

Notice also that for $x_{2i}=\frac{m_{j_{2i}}}{n_{j_{2i}}}\sum_{n\in A_{2i}}d_{\gamma_n}$, $A_{2i}\subset \{k_n: n\in\N\}$, by the condition on $(Td_{\gamma_{k_n}})$ we have $\|Tx_{2i}\| < \frac{m_{j_{2i}}}{n_{j_{2i}}} < 2^{-i}$ for each $i$.

Let $x = \frac{m_{2j-1}}{n_{2j-1}}\sum_{i=1}^{n_{2j-1}/2}x_{2i-1}$ and $ d = \frac{m_{2j-1}}{n_{2j-1}}\sum_{i=1}^{n_{2j-1}/2}x_{2i}$. We have 
\begin{equation}\label{d-to-ris1}
 \norm[Td]\leq \frac{m_{2j-1}}{n_{2j-1}}
\end{equation}
and by Lemma \ref{dep2}
\begin{equation}\label{d-to-ris2}
 \norm[x-d]\leq \frac{250}{m_{2j-1}^{2}}.
\end{equation}
On the other hand by the choice of $(\varphi_n)$ and $(\psi_n)$ there is a well-defined special node $\beta$, associated to the same $j$-special sequence as $\zeta_{n_{2j+1}}$ with 
$$
 \mt(e^{*}_\beta)=\frac{1}{m_{2j-1}}\sum_{i=1}^{n_{2j-1}/2} \left(e^{*}_{\xi_{2i-1}}P_{J_{2i-1}}+\tilde{\lambda}_{2i} 
e^{*}_{\eta_{2i}}P_{J_{2i}}\right),
$$
so that $\rank (\bdp(e^{*}_{\beta}))\cap \rank (Tx_{2i-1})=\emptyset$ for any $i$. Thus 
\begin{align*}
\norm[Tx]&\geq e^{*}_\beta(Tx)\geq \frac{\delta}{40C}
\end{align*}
which contradicts \eqref{d-to-ris1} and \eqref{d-to-ris2} for sufficiently big $j$ as $T$ is bounded.

\textbf{Case 2}. Case 1 does not hold. Applying this assumption for $c=n_{2j-1}^{-1}m_k^{-1}$, $k\in\N$, we pick inductively an increasing sequence $(p_k)\subset\N$ such that for any node $\varphi$ and $n>p_k$ so that $e^{*}_{\varphi},e^{*}_{\psi_n}$ have compatible 
tree-analyses we have $|e^{*}_{\varphi}(y_n)|\leq n_{2j-1}^{-1}m_k^{-1}$. Let $M=(p_k)_k$.

Now we repeat the proof of Prop. \ref{dist}, using $(y_n)$ instead of $(d_{\gamma_n})$. For a fixed $j\in\N$ we pick a $j$-dependent sequence $(\zeta_{i},x_{i})$, $\zeta_i=(q_i+1,\zeta_{i-1},m_{2j-1},J_i,1,\eta_i)$, $i=1,\dots, n_{2j-1}$, with $\zeta_0=0$, such that for each $i$ we have
$$
\mt(e^{*}_{\eta_{2i-1}})=\frac{1}{m_{j_{2i-1}}}\sum_{n\in A_i}\mu_ne^{*}_{\psi_n}P_{I_n},\,\,\
x_{2i-1}=\frac{c_{2i-1}m_{j_{2i-1}}}{n_{j_{2i-1}}}\sum_{n\in A_i}y_{n},\,\, \norm[x_{2i-1}]=\sfrac{1}{2},
$$ 
with $A_i\subset M$, $\# A_i=n_{j_{2i-1}}$ $J_{2i-1}=\rng (e^{*}_{\eta_{2i-1}})$, $J_{2i}\cap \supp x_{2k-1}=\emptyset$ for any $i,k$, $I_n=[\min R_n, \max R_n]$ and $\rank(\xi_n)=\max\rng R_n+1$ for any $n$. As  in the previous case, $\sfrac{1}{20C}\leq c_{2i-1}\leq 2$. Pick $j_1$ with $m_{j_1}/m_{j_1+1}\leq 1/n_{2j-1}^2$ and let 
$$
y=\frac{1}{n_{2j-1}}\sum_{i=1}^{n_{2j-1}/2}x_{2i-1}
$$
As in the proof of Prop. \ref{dist} it follows that 
\begin{equation}\label{ris-ty}
\norm[Ty]\geq e^{*}_{\zeta_{n_{2j-1}}}(y)\geq 
\frac{1}{m_{2j-1}n_{2j-1}}\sum_{i=1}^{n_{2j-1}/2}\frac{\delta}{2}c_{2i-1}\geq 
\frac{\delta}{80Cm_{2j-1}}.
\end{equation}
We shall estimate now $\norm[y]$. As before we consider a special node $\beta$ which is compatible with a $(\Gamma, j)$-special sequence $(\delta_{i},\bar{z}_{i})_{i=1}^{a}$, $a\leq n_{2j-1}$, and estimate $|e^{*}_{\beta}(z)|$ where $z=\sum_{i\in J}x_{2i-1}$ for some interval $J\subset\{1,\dots,n_{2j-1}\}$. Writing
$$
e_{\beta}^{*}=\sum_{i=1}^{a} 
d_{\tilde{\zeta}_i}^{*}+\frac{1}{m_{2j-1}}\sum_{i=1}^{\lfloor 
a/2\rfloor}(\tilde{\epsilon}_{2i-1}e_{\tilde{\eta}_{2i-1}}^{*} 
P_{\tilde{J}_{2i-1}}+\tilde{\lambda}_{2i}e^{*}_{\tilde{\eta}_{2i}} 
P_{\tilde{J}_{2i}})
$$
with $a\leq n_{2j-1}$ we pick as before $i_0=\min\{i\leq a: (\zeta_{i},x_{i})\ne(\delta_{i},z_{i})\}$ (if such 
$i$ exists) and estimate separately $\abs{\sum_{i=1}^ad^{*}_{\tilde{\zeta}_i}(w)}$, $\abs{\mt (e^{*}_{\tilde{\zeta}_{i_0}})(w)}$ and $\abs{(\mt(e^{*}_{\beta})-\mt (e^{*}_{\tilde{\zeta}_{i_0}}))(w)}$.

Repeating the reasoning of the proof of Prop. \ref{dist}, as $(y_n)$ have norm bounded by $1$ and all $\|d^{*}_{\tilde{\zeta}_i}\| \leq 3$, we obtain
\begin{equation}\label{op-ris1}
\abs{\sum_{i=1}^ad^{*}_{\tilde{\zeta}_i}(z)}\leq 
3\cdot 2n_{2j-1}\frac{m_{j_1}}{n_{j_1}} \leq \frac{1}{m_{2j-1}}.
\end{equation}
Using Corollary~\ref{diffw} and the fact that $|e^{*}_{\gamma}P_{I}(x_{i_{0}+1})|\leq 4$ we obtain that 
\begin{equation}\label{op-ris3}
 \abs{(\mt(e^{*}_{\beta})-\mt(e^{*}_{\tilde{\zeta}_{i_0}}))(z)}\leq 
 \frac{4}{m_{2j-1}}+ 2\frac{1}{m_{2j-1}} n_{2j-1}\frac{64C}{m_{j_{i_{0}+1}}}
 \leq 
\frac{5}{m_{2j-1}}
\end{equation}
using that  $m_{j_{1}}^{-1}<n_{2j-1}^{2}$  and  $n_{2j+1}>200C$.

Now consider $e^{*}_{\tilde{\zeta}_{i_0}}$, recall this functional and $e^{*}_{\zeta_{i_0}}$ have compatible tree-analyses. Therefore
$$
\mt(e^{*}_{\tilde{\zeta}_{i_0}})=
\begin{cases}
\frac{1}{m_{2j-1}}\sum_{i=1}^{i_0/2}(\tilde{\epsilon}_{2i-1}\egs[\tilde{\eta}_{2i-1}]P_{J_{2i-1}}+\tilde{\lambda}_{2i}\egs[\eta_{2i}]P_{J_{2i}}) \quad 
&\textrm{if } i_0 \text{ even}\\
\frac{1}{m_{2j-1}}\sum_{i=1}^{\lfloor 
i_0/2\rfloor}(\tilde{\epsilon}_{2i-1}\egs[\tilde{\eta}_{2i-1}]P_{J_{2i-1}}+\tilde{\lambda}_{2i}\egs[\eta_{2i}]P_{J_{2i}})+\tilde{\epsilon}_{i_0}\egs
[
\tilde{\eta}_{i_0}]P_{J_{i_0}} \quad &\textrm{if } i_0 \text{ odd}
\end{cases}
$$
where for each for each $2i-1\leq i_0$ we have
$$
e^{*}_{\tilde{\eta}_{2i-1}}=\sum_{n\in A_i}d^{*}_{\tilde{\xi}_n}+\frac{1}{m_{j_{2i-1}}}\sum_{n\in A_i}\tilde{\epsilon}_ne^{*}_{\varphi_n}P_{I_n}.
$$
By choice of the objects above we have
\begin{align*}
|\mt(e^{*}_{\tilde{\zeta}_{i_0}})(z)|&\leq 
\frac{1}{m_{2j-1}}|(\sum_{i=1}^{n_{2j-1}/2}\sum_{n\in A_i}d^{*}_{\tilde{\xi}_n})(\sum_{2i-1\in J}\frac{c_{2i-1}m_{j_{2i-1}} }{n_{j_{2i-1}}}\sum_{n\in A_i}y_n)|\\
&+\frac{1}{m_{2j-1}}\sum_{2i-1\in J}\frac{c_{2i-1}m_{j_{2i-1}} }{n_{j_{2i-1}}}\sum_{n\in A_i}|e^{*}_{\varphi_n}(y_n)|.
\end{align*}
As for each $n$ the nodes $\psi_n, \varphi_n$ have compatible tree-analyses the last sum can be estimated by $2m_{2j-1}^{-1}$. The first sum equals 0 by the condition on ranks of $\xi_n$, thus also $\tilde{\xi}_n$. Therefore we have
\begin{equation}\label{op-ris2}
 |\mt(e^{*}_{\tilde{\zeta}_{i_0}})(z)|\leq \frac{2}{m_{2j-1}}.
\end{equation}
As before by \eqref{op-ris1}, \eqref{op-ris3}, \eqref{op-ris2} we have $\abs{e^{*}_{\beta}(z)}\leq 8/m_{2j-1}$, thus we can apply Lemma \ref{p56} obtaining that $\norm[y]\leq 80C/m^{2}_{2j-1}$. For sufficiently big $j$ we obtain contradiction with \eqref{ris-ty} and boundedness of $T$. 

\end{proof}

\begin{theorem}
 Let $T: \mathfrak{X}_{Kus}\to \mathfrak{X}_{Kus}$ be a bounded operator. Then there exist a compact operator $K:\mathfrak{X}_{Kus}\to \mathfrak{X}_{Kus} $ and a scalar $\lambda$ such that $T=\lambda Id+K$.
\end{theorem}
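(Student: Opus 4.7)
The strategy is to produce the scalar $\lambda$ as the limit of the constants furnished by Proposition \ref{dist}, then to use Proposition \ref{d-to-RIS}, Corollary \ref{fundamental}, and the shrinking basis of Corollary \ref{shrinking} to show that $K:=T-\lambda I$ is compact. Proposition \ref{dist} supplies scalars $(\lambda_n)_n$, uniformly bounded because $\|T\|<\infty$ and $\inf_n\|d_{\gamma_n}\|>0$, such that $\|Td_{\gamma_n}-\lambda_nd_{\gamma_n}\|\to 0$.

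The heart of the argument is showing that $(\lambda_n)_n$ converges. Assume the contrary and extract subsequences with $\lambda_{n_k}\to\alpha$ and $\lambda_{m_k}\to\beta$, $\alpha\neq\beta$. Replacing $T$ by $T-\beta I$ (which only shifts the eventual scalar), I may assume $\beta=0$, so $\|Td_{\gamma_{m_k}}\|\to 0$ while $\|Td_{\gamma_{n_k}}\|$ stays bounded below by $|\alpha|\inf_n\|d_{\gamma_n}\|/2$ for large $k$. Using Lemma \ref{neighbours} and pigeonholing the weights, I refine $(n_k)$ so that the $\gamma_{n_k}$'s are pairwise non-neighbours and share a common weight $m_j^{-1}$; thinning further so that $\rank(\gamma_{n_k})+1<\rank(\gamma_{n_{k+1}})$ turns $(d_{\gamma_{n_k}})_k$ into a skipped FDD-block sequence of bounded local weight, hence into a RIS by Proposition \ref{LocWeightRIS}. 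The proof of Proposition \ref{d-to-RIS} uses the hypothesis ``$Td_{\gamma_n}\to 0$'' only through the extraction of a subsequence $(k_n)$ with $\|Td_{\gamma_{k_n}}\|\le 2^{-n}$; the sequence $(m_k)$ (thinned further) supplies such a subsequence, so the same proof yields $\|Ty_n\|\to 0$ for every RIS. Applying this to the RIS $(d_{\gamma_{n_k}})_k$ contradicts the lower bound on $\|Td_{\gamma_{n_k}}\|$, and therefore $\lambda_n\to\lambda$ for some $\lambda\in\R$.

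Setting $K:=T-\lambda I$, the triangle inequality together with $\lambda_n\to\lambda$ and $\sup_n\|d_{\gamma_n}\|<\infty$ gives $\|Kd_{\gamma_n}\|\to 0$. Proposition \ref{d-to-RIS} applied to $K$ then yields $\|Ky_n\|\to 0$ for every RIS $(y_n)_n$, and Corollary \ref{fundamental} further yields $\|Kx_n\|\to 0$ for every bounded block sequence $(x_n)_n$ in $\mathfrak{X}_{Kus}$.

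To deduce compactness of $K$, I use the shrinking basis. Since $\mathfrak{X}_{Kus}^{*}\cong\ell_1(\Gamma)$ is separable, $\mathfrak{X}_{Kus}$ contains no isomorphic copy of $\ell_1$, so by Rosenthal's $\ell_1$ theorem each bounded sequence $(x_n)\subset\mathfrak{X}_{Kus}$ admits a weakly Cauchy subsequence $(x_{n_k})_k$. The differences $y_k:=x_{n_{2k}}-x_{n_{2k-1}}$ are weakly null; passing to a further subsequence, the Bessaga--Pe\l{}czy\'nski selection principle produces a bounded block sequence $(z_k)$ with $\|y_k-z_k\|\to 0$, and the previous paragraph applied to $(z_k)$ gives $\|Kz_k\|\to 0$, so $\|Ky_k\|\to 0$. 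Hence $(Kx_{n_k})_k$ is Cauchy, and $K$ is compact. The main obstacle throughout is the convergence of $(\lambda_n)_n$: it requires the RIS construction from a subsequence of the basis with uniform weight together with a careful reading of the proof of Proposition \ref{d-to-RIS} so that its conclusion can be invoked when only a subsequence of $(Td_{\gamma_n})$ tends to zero.
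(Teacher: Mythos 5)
Your proposal is correct in outline and finishes exactly as the paper does (apply Proposition \ref{d-to-RIS} to $T-\lambda Id$, then Corollary \ref{fundamental}, then pass from block sequences to compactness via Corollary \ref{shrinking}, Rosenthal's theorem and Bessaga--Pe{\l}czy\'nski), but you obtain the single universal $\lambda$ by a genuinely different route. The paper gets it in one line from Proposition \ref{spheres}: the unit spheres of $\langle d_{\gamma_n}:n\in M\rangle$ and of any block subspace are at distance zero, which forces the limits along any two subsequences of the basis to agree. You instead run a bootstrap: after normalizing one limit to $0$ you note that the proof of Proposition \ref{d-to-RIS} consumes its hypothesis only through the extraction of a subsequence with $\|Td_{\gamma_{k_n}}\|\le 2^{-n}$ (used to build the even parts of the dependent sequence in Case 1; Case 2 does not use the hypothesis at all), so its conclusion already follows from a subsequential hypothesis, and applying it to a RIS extracted from the other subsequence yields the contradiction. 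This is a valid alternative, but it costs you two extra obligations: (a) you must reopen the proof of Proposition \ref{d-to-RIS} rather than quote its statement, and (b) you must check that a subsequence of the basis can be thinned to a RIS. On (b) your pigeonhole is stated too narrowly: if the weights $\w(\gamma_{n_k})$ are pairwise distinct, no subsequence shares a common weight; in that case you should instead thin to a subsequence with rapidly increasing local weight, which Proposition \ref{LocWeightRIS} covers equally well (this is the same dichotomy that opens the proof of Lemma \ref{neighbours}). With that one-line repair your argument is complete; the paper's route through Proposition \ref{spheres} is shorter and stays entirely at the level of the statements of the quoted results.
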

\begin{proof}
 By Prop. \ref{dist} any $(d_{\gamma_n})_{n\in N}$ has a further subsequence $(d_{\gamma_n})_{n\in M}$ such that $Td_{\gamma_n}-\lambda d_{\gamma_n}\to 0$ as $M\ni n\to \infty$, for some $\lambda$. By Prop. \ref{spheres} there is a 
universal $\lambda$ so that $Td_{\gamma_n}-\lambda d_{\gamma_n}\to 0$ as $n\to \infty$. Applying Prop. \ref{d-to-RIS} to the operator $T-\lambda Id$ we get that $Ty_n-\lambda y_n\to 0$ for any RIS $(y_n)$ and thus, by Prop. \ref{fundamental}, for any bounded block sequence $(y_n)$. It follows that the operator $T-\lambda Id$ is compact. 
\end{proof}
The above theorem implies immediately the following. 
\begin{corollary}
 The space $\mathfrak{X}_{Kus}$ is indecomposable, i.e. it is not a direct sum of two its infinitely dimensional closed subspaces. 
\end{corollary}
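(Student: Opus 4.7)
The plan is the standard deduction that the scalar-plus-compact property forces indecomposability. Suppose for contradiction that $\mathfrak{X}_{Kus} = Y \oplus Z$ where $Y, Z$ are both infinite-dimensional closed subspaces. Then the associated projection $P: \mathfrak{X}_{Kus} \to \mathfrak{X}_{Kus}$ with range $Y$ and kernel $Z$ is a bounded operator. By the theorem just proved, $P = \lambda \, Id + K$ for some scalar $\lambda$ and compact operator $K$.

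Next I would use the idempotency $P^2 = P$ to pin down $\lambda$. Expanding gives $\lambda^2 \, Id + 2\lambda K + K^2 = \lambda \, Id + K$, so $(\lambda^2 - \lambda)\, Id$ equals a compact operator. Since $\mathfrak{X}_{Kus}$ is infinite-dimensional, the identity is not compact, hence $\lambda^2 = \lambda$, so $\lambda \in \{0, 1\}$.

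Finally I would rule out both values. If $\lambda = 0$, then $P = K$ is compact; but $P$ restricted to its range $Y$ equals $Id_Y$, which would force $Id_Y$ to be compact, contradicting that $Y$ is infinite-dimensional. If $\lambda = 1$, then $Id - P = -K$ is compact; but $Id - P$ is the projection onto $Z$ along $Y$, and restricting to $Z$ gives $Id_Z$ compact, contradicting that $Z$ is infinite-dimensional. Both cases fail, so no such decomposition exists and $\mathfrak{X}_{Kus}$ is indecomposable. There is no real obstacle here — the entire content is in the preceding scalar-plus-compact theorem; this corollary is a two-line functional-analytic consequence.
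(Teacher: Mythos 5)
Your proof is correct and is precisely the standard deduction the paper has in mind: the authors state the corollary as an immediate consequence of the scalar-plus-compact theorem and omit the argument entirely. Your write-up of the idempotency computation and the two-case elimination of $\lambda\in\{0,1\}$ fills in exactly what "implies immediately" refers to, with no gaps.
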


\section{Unconditional saturation of the space $\mathfrak{X}_{Kus}$}
This section is devoted to the proof of saturation of the space $\mathfrak{X}_{Kus}$ by unconditional basic sequences. We follow the idea of the proof of the corresponding fact from \cite{AM} with additional work in order to control the bd-parts of norming functionals. Below we present a construction of unconditional sequences in $\mathfrak{X}_{Kus}$.

Fix a block subspace $Y\subset\mathfrak{X}_{Kus}$ and pick sequences $j_k<j_{k,1}<j_{k,2}<\dots< j_{k,n_{j_k}}$, $k\in\N$, with $(j_k)$ increasing, and a block sequence $(x_k)_k\subset Y$, with $x_k=\frac{m_{j_k}}{n_{j_k}}\sum_{i=1}^{n_{j_k}} x_{k,i}$ where for some fixed $C>2$ and for each $k\in\N$ the sequence $(x_{k,i})_i\subset Y$ is a $C-$RIS with parameters $(j_{k,i})_i$ chosen according to Lemma \ref{ris-exists} to satisfy $|d^{*}_\gamma(x_{k,i})|<1/n^2_{j_k}$ for any $i\leq n_{j_k}$ and $\gamma\in\Gamma$. Therefore
\begin{equation}\label{coordinates}
 |d^{*}_\gamma(x_k)|<C/n_{j_k}^2 \ \ \ \text{for any }k\in\N, \gamma\in\Gamma.
\end{equation}
We fix the sequence $(x_k)$ and the node $\gamma$ with the tree-analysis $(I_t, \epsilon_t,\eta_t)_{t\in\T}$ for the sequel.

Recall that $S_t$ denotes the set of immediate successors of $t$ in the tree $\T$. We order the sets $S_t$ with the order on $(I_s)_{s\in S_t}$ and we write $s_-$ for the immediate predecessor of $s$.

\begin{definition}
A couple of nodes $(\eta_{s_-}, \eta_s)$ is called a dependent couple with respect to $\gamma$ if $s_-,s\in S_t$, $\w(\eta_t)=m_{2j+1}^{-1}$ for some $j\in\N$ and $s$ is at the even position in the mt-part of $e^*_{\eta_t}$.
\end{definition}

 Let $\E_\gamma=\{s\in\T: (\eta_{s_-},\eta_s) \text{ is a dependent couple with respect to }\gamma\}$.

\begin{definition}
 For $k\in\N$ a couple of nodes $(\eta_{s_-}, \eta_s)$ is called a dependent couple with respect to $\gamma$ and $x_k$ if 
$(\eta_{s_-}, \eta_s)$ is a dependent couple with respect to $\gamma$ and moreover
 \[
 \min\supp(x_{k+1})>\max\supp(e^{*}_{s}P_{I_{s}}) \geq \min\supp(x_k),
 \]
 \[
 \max\supp(x_{k-1})\geq\min\supp(e^{*}_{s_-}P_{I_{s_-}}).
 \]
\end{definition}
\begin{remark}
Note that if $(\eta_{s_{-}},\eta_s)$, $(\eta_{t_{-}},\eta_t)$ are dependent couples then $t,s$ are incomparable.
\end{remark}

Let $\F_{\gamma}=\{s\in\T\mid (\eta_{s_-},\eta_s)\text{ is a dependent couple with respect to $\gamma$ and $x_k$ for some $k$}\}$ and let $Q_\gamma=\sum_{s\in\F_\gamma}P_{I_s}$. Then we define $y_k = Q_\gamma x_k$ and $x^\prime_k=x_k-y_k$. As our basis $(d_\gamma)_{\gamma\in\Gamma}$ is not unconditional, the projections $(Q_\gamma)_\gamma$ are not uniformly bounded. However, we have the following lemma that is proved along the lines of \cite{AM}. 
\begin{lemma} \label{action_on_ys}
 \begin{enumerate}
 \item[(i)] For every $k\in\N$ and $t\in\T$ we have $|e^{*}_{\eta_t} P_{I_t}(y_k)|\leq 10C/m_{j_k}$,
 \item[(ii)] For every $k\in\N$ and $t\in\T$ with $\w(\eta_t)<m_{j_k}^{-1}$ we have $|e^{*}_{\eta_t}P_{I_t}(x^\prime_k)|\leq 11C/m_{j_k}$.
 \end{enumerate}
\end{lemma}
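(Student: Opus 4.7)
The plan is to follow the strategy used in \cite{AM} for the analogous result in the mixed Tsirelson setting, with the main adaptation being the treatment of finite-interval projections $P_I$ in place of right-interval ones. The principal analytic tool will be Lemma~\ref{p56}(a), applied to the defining $C$-RIS $(x_{k,i})_{i=1}^{n_{j_k}}$ of $x_k$, after rewriting each finite-interval projection as a difference $P_J = P_{(a,\infty)}-P_{(b,\infty)}$; the coordinate estimate \eqref{coordinates} plays an auxiliary role in controlling contributions from bd-parts.

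For part (i), I would expand
\[
 e^{*}_{\eta_t} P_{I_t}(y_k)=\sum_{s\in\F_\gamma}e^{*}_{\eta_t}P_{I_t\cap I_s}(x_k)
\]
and classify the contributions according to the comparability of $s$ with $t$ in $\T$. Incomparable $s$ contribute zero, since $I_s\cap I_t=\emptyset$ by the tree property of $(I_s)_{s\in\T}$. For comparable $s$, we have $P_{I_t\cap I_s}=P_{I_t}$ when $s\preceq t$, and $P_{I_t\cap I_s}=P_{I_s}$ when $t\prec s$, and each surviving term is bounded via Lemma~\ref{p56}(a). The dependent-couple condition from the definition of $\F_\gamma$ assigns to each $s$ essentially a unique $k=k(s)$ through the requirement $\max\supp(e^{*}_{\eta_s}P_{I_s})\in\supp x_{k}$, which restricts which $s$ meaningfully contribute to a given $y_k$; the total is then bounded by $10C/m_{j_k}$ after summation, using the growth conditions \eqref{mklk}.

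For part (ii), I would write $e^{*}_{\eta_t}P_{I_t}(x'_k)=e^{*}_{\eta_t}P_{I_t}(x_k)-e^{*}_{\eta_t}P_{I_t}(y_k)$ and invoke (i) to handle the second term. For the first term the assumption $\w(\eta_t)=m_i^{-1}<m_{j_k}^{-1}$ means $i>j_k$, so Lemma~\ref{p56}(a) gives $|e^{*}_{\eta_t}(x_k)|\leq 10C/m_{j_k}$ via \eqref{p56a}; the additional finite-interval projection is dealt with by the right-interval splitting above, each summand being bounded by $5C/n_{j_k}+6C/m_i$ and the growth condition \eqref{mklk} ensuring that $m_{j_k}/m_i\lesssim 1/m_{j_k}$ whenever $i>j_k$, which yields the required bound $11C/m_{j_k}$.

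The main obstacle is that $Q_\gamma$ is not necessarily a projection: if two nodes $s_1\prec s_2$ both lie in $\F_\gamma$, their intervals satisfy $I_{s_2}\subset I_{s_1}$, and the sum $P_{I_{s_1}}+P_{I_{s_2}}$ double-counts basis vectors in $I_{s_2}$. Controlling this in (i) requires exploiting the fact that the coding function $\sigma$ forces rapid growth of the weights $\w(\eta_s)$ along chains of nested dependent couples, so that only $O(1)$ nested $s$ can simultaneously contribute nontrivially to $P_{I_s}x_k$ for a given $k$. A secondary difficulty is that in (ii), when $t$ is an ancestor of some $s\in\F_\gamma$ corresponding to $x_k$, the contributions from the mt-part of $e^{*}_{\eta_t}$ and its interaction with $y_k$ must be controlled using Corollaries~\ref{diffw} and~\ref{diffwb} together with the special-node structure of $e^{*}_{\eta_s}$.
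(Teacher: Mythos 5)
Your skeleton is the paper's: expand $e^{*}_{\eta_t}P_{I_t}(y_k)$ over $s\in\F_\gamma$, discard incomparable terms, sum the descendants with the product of weights along the path, and for (ii) write $x'_k=x_k-y_k$ and combine (i) with Lemma~\ref{p56}. But there is a genuine gap at the heart of (i): you propose to bound each surviving term $|e^{*}_{\eta_s}P_{I_s}(x_k)|$, $s\in\F_\gamma$, by Lemma~\ref{p56}(a). That lemma yields the needed bound $10C/m_{j_k}$ only when $\w(\eta_s)=m_{2j}^{-1}<m_{j_k}^{-1}$; in the opposite case $2j\le j_k$ it gives $16C\,m_{j_k}\cdot(m_{2j}m_{j_k})^{-1}=16C/m_{2j}$, which can be as large as $16C/m_1$ and is useless. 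That case requires a different mechanism, which is the real content of the lemma: since $s$ sits at an even position inside a special functional, $e^{*}_{\eta_s}$ is a sum of at most $2n_{2j}$ functionals of the form $\lambda d^{*}_{\beta}$ (its bd-part \emph{and} its mt-part $\w(\eta_s)\sum_k d^{*}_{\beta_k}$ are both of this type), so \eqref{coordinates} gives $|e^{*}_{\eta_s}P_{I_s}(x_k)|\le 2n_{2j}\cdot C/n_{j_k}^2\le 2C/n_{j_k}$. Your plan relegates \eqref{coordinates} to ``controlling bd-parts,'' which misses that for these even nodes the entire functional, mt-part included, is a sum of $d^{*}$'s --- this is precisely why the even positions of special sequences are built from averages of the basis. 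Without this case the estimate does not close.

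Second, the ``main obstacle'' you identify --- that two elements of $\F_\gamma$ could be nested, so that $Q_\gamma$ double-counts --- does not occur: by the Remark following the definition of dependent couples, any two members of $\E_\gamma$ are incomparable (an even node has only terminal successors in its tree-analysis, so no dependent couple can sit strictly below another), hence the intervals $(I_s)_{s\in\F_\gamma}$ are pairwise disjoint and $Q_\gamma$ is a genuine, if not uniformly bounded, projection. The machinery you propose for nesting (growth of weights along chains forced by $\sigma$) is therefore unnecessary; what is needed instead, and what you only gesture at, is the count that for each level $i$ at most two $s\in\F_\gamma$ with $|s|=i$ have $I_s$ meeting $\rng(x_k)$, combined with the factor $\prod_{t\preceq u\prec s}\w(\eta_u)\le m_1^{-(|s|-|t|)}$, so the levels sum geometrically to $\tfrac{20C}{m_{j_k}}\sum_i m_1^{-i}=10C/m_{j_k}$. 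Part (ii) as you describe it is essentially what the paper does, and the extra appeal to Corollaries~\ref{diffw} and~\ref{diffwb} there is not needed.
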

\begin{proof}
Concerning $(i)$, notice first that for any $s\in\mathcal{F}_\gamma$ we have $|e^{*}_{\eta_s}P_{I_s}(x_k)|\leq 10C/m_{j_k}$. Indeed, for $\w(\eta_s)=m_{2j}$ for some $j$, we consider the following two cases. If $m_{2j}^{-1}<m_{j_k}^{-1}$ then the estimate follows by \eqref{bw}. If $m_{2j}^{-1}\geq m_{j_k}^{-1}$, then by the form of $e^{*}_{\eta_s}$ and \eqref{coordinates} we have 
$$
|e^{*}_{\eta_s}P_{I_s}(x_k)|\leq 2n_{2j}\max_{\gamma\in\Gamma}|d^{*}_{\gamma}(x_k)|\leq 2C/n_{j_k}
$$

Now, as each of the sets $\{s\in\F_\gamma\mid |s|=i, \rng(x_k)\cap I_s\neq\emptyset\}$, $i\in\N$, has at most two elements, we have
 \begin{eqnarray*}
 |e^{*}_{\eta_t} P_{I_t}(y_k)| & \leq & \sum_{s\in\F_\gamma}\left(\Pi_{t \preceq u \prec s}\w(\eta_u)\right)|e^{*}_{\eta_s}P_{I_s}(x_k)|\\
 & = & \sum_i\sum_{s\in\F_\gamma,|s|=i}\left(\Pi_{t \preceq u \prec s}\w(\eta_u)\right)|e^{*}_{\eta_s}P_{I_s}(x_k)|\\
 & \leq & \frac{20C}{m_{j_k}}\sum_i\frac{1}{m^i_1} = \frac{10C}{m_{j_k}}.
 \end{eqnarray*}
Condition $(ii)$ follows from Lemma~\ref{p56a} and $(i)$.
\end{proof}

\begin{lemma} \label{killing_signs}
 For  every choice of signs $(\delta_k)$ there exists a node $\widetilde \gamma\in\Gamma$ such that $Q_\gamma=Q_{\tilde{\gamma}}$ and $\epsilon\in\{\pm 1\}$ so that 
 \[
 |e^{*}_\gamma(x^\prime_k)-\epsilon e^{*}_{\widetilde\gamma}(\delta_kx^\prime_k)|\leq \frac{6C}{m_{j_k}} \text{ for any }k\in\N.
 \]
\end{lemma}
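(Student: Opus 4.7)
The plan is to construct $\widetilde\gamma \in \Gamma$ by modifying only the signs $\epsilon_t$ (and correspondingly the functionals $\eta_t$) in the tree-analysis of $\gamma$ at exactly the positions where Remark~\ref{mt-part} grants freedom: all immediate successors of regular nodes (weight $m_{2j}^{-1}$), and odd-position successors of special nodes (weight $m_{2j-1}^{-1}$). The even-position successors of special nodes, which form $\E_\gamma$, must be kept rigid with $\widetilde\epsilon_t=1$ and $\widetilde\eta_t=\eta_t$. The pivotal point is that $P_{I_s} x'_k = 0$ for every $s\in\F_\gamma$ by construction of $y_k=Q_\gamma x_k$, so the rigidity imposed by positions in $\F_\gamma$ causes no harm when evaluating on $x'_k$.

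I would proceed by induction down $\T$, assigning each $t$ a cumulative sign $\sigma_t := \prod_{\emptyset \prec s \preceq t}\widetilde\epsilon_s / \epsilon_s \in\{\pm 1\}$, with $\sigma_\emptyset=\epsilon$ a global sign to be fixed at the root. Using the freedom at permitted positions, I would enforce: whenever $I_t$ is the minimal element of $\{I_s : s\in\T,\ \rng(x_k)\subset I_s\}$, then $\sigma_t = \epsilon\delta_k$. Since the $x_k$'s have pairwise disjoint ranges and the family $(I_t)_{t\in\T}$ refines down the tree, any conflict between distinct $k$'s is resolved at a deeper branching level where independent free nodes are available. At the rigid positions $s\in\E_\gamma$ one has $\sigma_s = \sigma_{s_-}$ forced: this is harmless, because either $s\in\F_\gamma$ (and $P_{I_s}x'_k=0$), or $s\in\E_\gamma\setminus\F_\gamma$, in which case the range $I_s$ fails the straddling condition of the dependent-couple definition, so the contribution of $e^*_{\eta_s}P_{I_s}$ to $x'_k$ is either zero or absorbed into the error term.

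The main obstacle is the bookkeeping of the discrepancy $|e^*_\gamma(x'_k) - \epsilon e^*_{\widetilde\gamma}(\delta_k x'_k)|$, which decomposes into three sources of error: (a) the bd-contributions $\sum_t d^*_{\xi_t}$, estimated via \eqref{coordinates} to give at most a term of order $C n_{2j-1}/n^2_{j_k}$, negligible compared to $1/m_{j_k}$; (b) the mt-contributions from rigid positions $s\in\E_\gamma\setminus\F_\gamma$ whose signs could not be adjusted, bounded by a weighted tree-sum argument modelled on the proof of Lemma~\ref{action_on_ys}(i), producing a total of order $C/m_{j_k}$; and (c) the small perturbation arising because $\widetilde\lambda_{2i}$ differs from $\lambda_{2i}$ by at most $1/(4n_{q_{i-1}}^2)$, as required by Definition~\ref{defcompatible}(4). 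Direct addition of these contributions yields the announced bound $6C/m_{j_k}$.

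Finally, one verifies $\widetilde\gamma\in\Gamma$ by induction on the age, invoking Remark~\ref{mt-part}: the regular-node modifications are admissible by part (1), and the special-node modifications by part (2) together with Definition~\ref{defcompatible}. Since the sign modifications preserve the tree structure $(\T, (I_t)_t)$, the weights, ranks, mt-supports, and ranks of the bd-parts, the set of dependent couples with respect to $(x_k)$ and their locations in the tree coincide for $\gamma$ and $\widetilde\gamma$; hence $\F_{\widetilde\gamma}=\F_\gamma$ and $Q_{\widetilde\gamma}=Q_\gamma$ as required.
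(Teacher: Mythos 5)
Your overall strategy coincides with the paper's: descend the tree-analysis to the first level at which the intervals $I_t$ separate the $x_k$'s, flip the signs $\epsilon_t$ there using the freedom of Remark \ref{mt-part}, propagate the modification back up, and control the discrepancy by the coordinate estimate \eqref{coordinates} for the bd-parts together with an ``at most two straddling successors per level'' tree-sum as in Lemma \ref{action_on_ys}. (A minor imprecision: the nodes at which the cumulative sign $\epsilon\delta_k$ should be imposed are not the minimal $I_t$ containing $\rng(x_k)$ --- a single such node may be minimal for two indices $k\ne k'$ with $\delta_k\ne\delta_{k'}$ --- but the maximal $t$ whose interval meets the range of at most one $x_k$, i.e.\ the paper's set $D$; your remark about resolving conflicts at a deeper branching level suggests you intend the latter.)

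The genuine gap is your treatment of the even positions $s\in\E_\gamma$. You keep them fully rigid, asserting in your error item (c) that $\tilde{\lambda}_{2i}$ differs from $\lambda_{2i}$ only by the net error $1/(4n_{q}^{2})$. But by Definition \ref{defcompatible}(4) the coefficient attached to an even-age node must approximate $\lambda_{\tilde{\xi},\bar{x}}=\tilde{\epsilon}\,e^{*}_{\tilde{\eta}}(\bar{x})$ computed from its \emph{modified} odd predecessor; once you flip that predecessor's sign by $\delta_k=-1$, the only admissible coefficients lie near $-\lambda_{2i}$, not near $\lambda_{2i}$. With your choice the resulting tuple violates the compatibility condition, so $\widetilde\gamma\notin\Gamma$ and $e^{*}_{\widetilde\gamma}$ does not exist. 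The paper's construction resolves this by setting $\tilde{\lambda}_s=\delta_k\lambda_s$, which is possible precisely because $\Net_{1,q}$ is symmetric; this one move simultaneously restores admissibility of the special node and yields the exact cancellation $\tilde{\lambda}_se^{*}_{\eta_s}P_{I_s}(\delta_kx^{\prime}_k)=\lambda_se^{*}_{\eta_s}P_{I_s}(x^{\prime}_k)$ at those even positions of $\E_\gamma\setminus\F_\gamma$ which still act nontrivially on $x^{\prime}_k$ --- positions your argument dismisses as ``absorbed into the error term'' without any estimate. Without the $\lambda$-flip the proof does not go through.
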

\begin{proof}
 Define 
 \begin{align*}
 D= & \{t\in\T\mid\rng(x_k) \cap\rng( e^{*}_tP_{I_t})\neq\emptyset\text{ for at most one }k \\
 & \text{ and if $t\in S_u$ then $\rng(x_i)\cap\rng( e^{*}_uP_{I_u})\ne\emptyset$ for at least two }i\}.
 \end{align*}
 
Since for every branch $b$ of $\T$ the set $b\cap D$ has exactly one element we can define a subtree $\T'$ of $\T$ such that $D$ is the set of terminal nodes for $\T'$. Notice that  $(\T\setminus\T')\cap\F_\gamma=\emptyset$.

If $\gamma\in D$, then we pick the unique $k_0$ with $\rng (e^{*}_\gamma)\cap \rng (x_{k_0})\neq \emptyset$ (as $I_\emptyset=[1,\max\Delta_{\rank(\gamma)}]$) and let $\tilde{\gamma}=\gamma$ and $\epsilon=\delta_{k_0}$. Then we have the estimate in the lemma for any $k\in\N$. 

Assume that $\gamma\not\in D$. Using backward induction on $\T'$ we shall define a node $\widetilde\gamma$ with a tree-analysis $(I_t,\tilde{\epsilon}_t,\widetilde\eta_t)_{t\in\T}$ and associated scalars $(\tilde{\lambda}_t)_{t\in\T}$, by modifying the nodes $(I_t,\epsilon_t,\eta_t)_{t\in\T'}$  and scalars $(\lambda_t)_{t\in\T'}$ starting from elements of $D$ such that 
 \begin{enumerate}
 \item[(T1)] $e^{*}_{\eta_t}$, $e^{*}_{\tilde{\eta}_t}$ have compatible tree-analyses for any $t\in\T'$, 
 \item[(T2)] $\F_{\widetilde\eta_t}=\F_{\eta_t}$ for any $t\in\T'$,
 \item[(T3)] $\tilde{\epsilon}_te^{*}_{\tilde{\eta}_t}P_{I_t}(\e_kx^\prime_k)=\epsilon_te^{*}_{\eta_t}P_{I_t}(x^\prime_k)$ for any $t\in D\setminus \E_\gamma$ and $k$, \\
 $\tilde{\lambda}_te^{*}_{\tilde{\eta}_t}P_{I_t}(\e_kx^\prime_k)=\lambda_te^{*}_{\eta_t}P_{I_t}(x^\prime_k)$ for any $t\in D\cap \E_\gamma$ and $k$,
 \item[(T4)] $\tilde{\epsilon}_t=\epsilon_t$ for any $t\in\T'\setminus D$.
 \end{enumerate}
Roughly speaking we need to modify only $\epsilon_{t}$, $t\in D$, changing signs of some of them. These modifications determine changes in the rest of the tree, i.e. $\eta_u$, $u\in\T'\setminus D$ according to the rules of producing nodes and Remark \ref{mt-part}.

\textbf{Step 1}. Take $t\in D$. 

\textbf{Case 1a}. $t\not \in \E_\gamma\cup\bigcup_{u\in \E_\gamma}S_u$. We set $\tilde{\eta}_t=\eta_t$ and $\tilde{\epsilon_t}=\delta_k\epsilon_t$, if $\rng(e^{*}_tP_{I_t})$ intersects $\rng(x_k)$ for some (unique) $k$, otherwise $\tilde{\epsilon}_t=\e_{m}\epsilon_{t}$ where $m=\min\{i:\rng e^{*}_{\eta_{t}}P_{I_{t}}\leq \rng(x_{i})\}$.

The condition (T3) follows straitforward.

\textbf{Case 1b}. 
$t\in \E_\gamma\cup\bigcup_{u\in \E_\gamma}S_u$. In this case we set $\tilde{\eta}_t=\eta_t$ and $\tilde{\epsilon_t}=\epsilon_t(=1)$. 
Moreover, for $t \in \E_\gamma$ we set $\tilde{\lambda}_t=\delta_{k}\lambda_t$. Such choice is possible since $Net_{1,q}$ is symmetric. It follows that
$$
\vert \tilde{\lambda}_{t}-\tilde{\epsilon}_{t^{-}}e^{*}_{\eta_{t^{-}}}(y_{2i-1}^{t})\vert=\vert\delta_{k}\lambda_{t}-\delta_{k}\epsilon_{t^{-}}e^{*}_{\eta_{t^{-}}}(y^{t}_{2i-1})\vert=\vert \lambda_{t}-\epsilon_{t^{-}}e^{*}_{\eta_{t^{-}}}(y^{t}_{2i-1})\vert
$$
where $(y_{i}^{t})_{i}$ are the vectors of the suitable special sequences.

In order to verify condition (T3) we consider two subcases.
\begin{enumerate}
 \item if $t\in \F_\gamma$ or $t\in S_u$ for some $u\in\E_\gamma$ (then $u\in\F_\gamma$), it follows that $\rng (e^{*}_{\eta_t}P_{I_t})\cap \rng x^\prime_k=\emptyset$ for any $k$ by the definition of $x^\prime_k$, thus we obtain (T3).
\item if $t\in \E_\gamma\setminus\F_\gamma$ and $\rng (e^{*}_{\eta_t}P_{I_t})\cap \rng x^\prime_k\neq\emptyset$ for some $k$, it follows that $e^{*}_{t_-}\in D$ as well and moreover $\rng (e^{*}_{\eta_{t_-}}P_{I_{t_-}})$ either intersects only $\rng x_k$ or intersects no $\rng x_i$. In both cases $\tilde{\epsilon}_{t_-}=\e_k\epsilon_{t_-}$ and so $\tilde{\lambda}_t=\e_k\lambda_t$ and (T3) holds.

\end{enumerate}

Notice that in either case conditions (T1)-(T2) and (T4) are straitforwardly satisfied.

\textbf{Step 2}. Now we define inductively nodes in $t\in \T'\setminus D$. Take $t\in \T'\setminus D$ and assume we have defined $(\tilde{\epsilon}_s,\tilde{\eta}_s, I_s)_{s\in S_t}$ satisfyng (T1)-(T4). In all cases we let $\tilde{\epsilon_t}=\epsilon_t$, thus (T4) is satisfied. Notice that $t\not\in\bigcup_{u\in \E_\gamma}S_u$.

\textbf{Case 2a}. $t\in \E_\gamma$. In this case we set $\tilde{\eta}_t=\eta_t$. Obviously we have (T1)-(T2).

\textbf{Case 2b}. $t\not \in \E_\gamma$, $\w(\eta_t)=m_{2j}^{-1}$. Then using Remark \ref{mt-part} (1) we define $\tilde{\eta}_t$ so that 
$$
\mt(e^{*}_{\tilde{\eta}_{t}})=\frac{1}{m_{2j}}\sum_{s\in S_{t}}\tilde{\epsilon}_se^{*}_{\tilde{\eta}_{s}}P_{I_{s}}.
$$
By definition we have (T1)-(T2).

\textbf{Case 2c}. $\w(\eta_t)=m_{2j+1}^{-1}$, with $\eta_t$ compatible with a $(\Gamma,j)$-special sequence $(\bar{x}_t,\bar{\eta}_t)$.
Then using Remark \ref{mt-part} (2) we define a special node $\tilde{\eta}_t$ which is compatible with the same 
$(\Gamma,j)$-special sequence $(\bar{x}_t,\bar{\eta}_t)$ so that 
$$
\mt(e^{*}_{\tilde{\eta}_{t}})=\frac{1}{m_{2j+1}}\sum_{s\in S_{t}\cap \E_\gamma}(\tilde{\epsilon}_{s_-}e^{*}_{\tilde{\eta}_{s_-}}P_{I_{s_-}}+\tilde{\lambda}_se^{*}_{\tilde{\eta}_{s}}P_{I_s}).
$$

By definition we have (T1)-(T2).

Let $\tilde{\gamma}=\tilde{\eta}_\emptyset$. Notice that by conditions (T1)-(T2) we have $Q_{\tilde{\gamma}}=Q_\gamma$.

 \ 
 
Now we proceed to show the estimate part of the lemma. Fix $k\in\N$. For any non-terminal $u\in\T$ let 
$$
S_{u,k}:=\{s\in S_{u}\mid \rng(x_k)\cap \rng(e^{*}_sP_{I_s})\neq\emptyset\}.
$$
Let $G$ be the set of minimal nodes $u$ of $\T^\prime$ with $u\in D$ or $\w(\eta_{u})<m_{j_k}^{-1}$. By $T''$ denote the subtree of $T^\prime$ with the terminal nodes in $G$.

We shall prove by induction starting from $G$ that for any $u\in\T''$ we have 
\begin{align}\label{est}
|\epsilon_ue^{*}_{\eta_{u}}P_{I_{u}}(x^\prime_k)-\tilde{\epsilon}_ue^{*}_{\widetilde\eta_{u}}P_{I_{u}}(\delta_kx^\prime_k)|\leq \frac{22C}{m_{j_k}}.
\end{align}
This will end the proof as it follows by (T4) that $|\epsilon_\emptyset e^{*}_{\eta_\emptyset}(x^\prime_k)-\tilde{\epsilon}_\emptyset e^{*}_{\tilde{\eta}_\emptyset}(\delta_kx^\prime_k)|=|e^{*}_\gamma(x^\prime_k)-e^{*}_{\tilde{\gamma}}(\delta_kx^\prime_k)|$. Thus taking $\epsilon=1$ we obtain the estimate of the lemma.

\textbf{Step 1}. $u\in G$. If $\w(\eta_{u})<m_{j_k}^{-1}$ then the estimate \eqref{est} holds true by Lemma \ref{action_on_ys} (ii). If $u\in D$ then the estimate \eqref{est} holds true by (T3).

\textbf{Step 2}. $u\in\T''\setminus G$. In particular $\w(\eta_{u})\geq m_{j_k}^{-1}$. Obviously $S_u\subset T''$.

\textbf{Case 2a}. $\w(\eta_{u})=m_{2j}^{-1}$. We estimate, using (T3) for $s\in S_{u,k}\cap D$
\begin{align*}
|e^{*}_{\widetilde\eta_{u}}P_{I_u}(\e_{k}x_{k}^{\prime})-e^{*}_{\eta_{u}}P_{I_{u}} (x^\prime_k)|
&=|\left(\sum_{s\in S_u}d_{\tilde{\xi}_{s}}^{*}+\frac{1}{m_{2j}}\sum_{s\in S_{u,k}\cap D }\tilde{\epsilon}_se^{*}_{ \tilde{\eta}_{s} } P_{I_s}+\frac{1}{m_{2j}}\sum_{s\in S_{u,k}\setminus D }\tilde{\epsilon}_se^{*}_{ \tilde{\eta}_{s} } 
P_{I_s}\right)(\e_{k}x_{k}^{\prime})\\
&-\left(\sum_{s\in S_{u}}d_{\xi_{s}}^{*}+\frac{1}{m_{2j}}\sum_{s\in S_{u,k}\cap D }\epsilon_se^{*}_{\eta_{s}}P_{I_s}+\frac{1}{m_{2j}}\sum_{s\in S_{u,k}\setminus D }\epsilon_se^{*}_{ \eta_{s}}P_{I_s} 
\right)(x_{k}^{\prime})|
 \\
&\leq |\sum_{s\in S_{u} } d_{\tilde{\xi}_{s}}^{*}(\e_{k}x_{k}^{\prime})|+|\sum_{s\in S_{u} } d_{\xi_{s}}^{*}(x_{k}^{\prime})|+\frac{1}{m_{2j}}\sum_{s\in S_{u,k}\setminus D} |\tilde{\epsilon}_se^{*}_{\tilde{\eta}_s}P_{I_s}(\e_kx_k^\prime) -\epsilon_se^{*}_{\eta_s}P_{I_s}(x_k^\prime)|\\
&\leq \dots
\end{align*}
The first two sums are  estimated using \eqref{coordinates} and $\# S_{u}\leq n_{2j}\leq n_{j_k}$, for the third element use the inductive hypothesis and the fact that $\# (S_{u,k}\setminus D)\leq 2$, obtaining the following
\begin{align*}
\dots&\leq
2n_{2j}\frac{C}{n_{j_{k}}^2}+\frac{2}{m_{2j}}\cdot\frac{22C}{m_{j_k}}\leq\frac{22C}{m_{j_k}}.
\end{align*}

\textbf{Case 2b}. $\w(\eta_{u})=m_{2j+1}^{-1}$. Recall that by (T3) we have $\epsilon_{s_-}e^{*}_{\eta_{s_-}}P_{I_{s_-}}(x^\prime_k)=\tilde{\epsilon}_{s_-}
e^{*}_{\tilde{\eta}_{s_-}}I_{s_-}(\delta_kx^\prime_k) $ for any $s\in S_u\cap\E_\gamma$ with $s_-\in D$ and 
$\lambda_se^{*}_{\eta_s}P_{I_s}(x^\prime_k)=\tilde{\lambda}_se^{*}_{\tilde{\eta}_s} I_s(\delta_kx^\prime_k) $ for any $s\in S_u\cap\E_\gamma\cap D$. Moreover $\E_\gamma\setminus D\subset\F_\gamma$ thus $e^{*}_{\eta_s}P_{I_s}(x^\prime_k)=0=e^{*}_{\tilde{\eta}_s}P_{I_s}
(\delta_kx^\prime_k)$ for any $s\in (S_u\cap\E_\gamma)\setminus D$. Therefore we have
\begin{align*}
|e^{*}_{\wth_{u}}P_{I_u}(\e_{k}x_{k}^{\prime})&-e^{*}_{\eta_{u}}P_{I_{u}}(x^\prime_k)|\\
&=|\left(\sum_{s\in S_{u}}d_{\tilde{\xi}_{s}}^{*}+\frac{1}{m_{2j+1}} \sum_{
s_-\in S_{u,k}, s\in\E_\gamma}\tilde{\epsilon}_{s_-}e^{*}_{ \tilde{\eta}_{s_{-} }}P_{I_s}+
\frac{1}{m_{2j+1}} \sum_{ s\in S_{u,k}\cap\E_\gamma}\tilde{\lambda}_s e^{*}_{\wth_{s} }P_{I_s}\right)(\e_{k}x_{k}^{\prime})
 \\
&-\left(\sum_{s\in S_{u}}d_{\xi_{s}}^{*}+\frac{1}{m_{2j+1}} \sum_{ s_-\in S_{u,k},s\in\E_\gamma} \epsilon_{s_-}e^{*}_{ \eta_{s_{-} }}P_{I_s}+\frac{1}{m_{2j+1}} \sum_{ s\in S_{u,k}\cap\E_\gamma} \lambda_s e^{*}_{\eta_{s} }P_{I_s}\right)(x_{k}^{\prime})|
\\
&=|\left(\sum_{s\in S_{u}}d_{\tilde{\xi}_{s}}^{*}+\frac{1}{m_{2j+1}} \sum_{ s_-\in S_{u,k}\setminus D, s\in\E_\gamma}\tilde{\epsilon}_{s_-}e^{*}_{ \tilde{\eta}_{s_{-} }}P_{I_s}\right)(\e_{k}x_{k}^{\prime})
 \\
&-\left(\sum_{s\in S_{u}}d_{\xi_{s}}^{*}+\frac{1}{m_{2j+1}} \sum_{ s_-\in S_{u,k}\setminus D,s\in\E_\gamma} \epsilon_{s_-}e^{*}_{ \eta_{s_{-} }}P_{I_s}\right)(x_{k}^{\prime})|
\\
&\leq |\sum_{s\in S_{u} } d_{\tilde{\xi}_{s} }^{*}(\e_{k}x_{k}^{\prime})|+|\sum_{s\in S_{u} } d_{\xi_{s}
}^{*}(x_{k}^{\prime})|+
\\
&\hspace{3cm}
+\frac{1}{m_{2j+1}}\sum_{s_-\in S_{u,k}\setminus D, s\in\E_\gamma}|\tilde{\epsilon_{s_-}}e^{*}_{\tilde{\eta}_{s_-}}P_{I_{s_-}}
(\e_kx_k^\prime) -\epsilon_{s_-}e^{*}_{\eta_{s_-}}P_{I_{s_-}}(x_k^\prime)|
\\
&\leq\dots
\end{align*}
Proceeding as in Case 2a we obtain
\begin{align*}
 \dots&\leq 2n_{2j+1}\frac{C}{n_{j_{k}}^2}+\frac{2}{m_{2j+1}}\cdot \frac{22C}{m_{j_k}}\leq\frac{22C}{m_{j_k}}.
\end{align*}

\end{proof}

\begin{theorem} \label{us}
The space $\mathfrak{X}_{Kus}$ is unconditionally saturated.
\end{theorem}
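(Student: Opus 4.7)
The plan is to show that the block sequence $(x_k)$ constructed in the paragraph preceding Lemma~\ref{action_on_ys} is itself an unconditional basic sequence, provided the parameters $(j_k)$ are chosen sufficiently sparse. Since Lemma~\ref{ris-exists} allows this construction inside any block subspace $Y \subset \mathfrak{X}_{Kus}$, this establishes unconditional saturation. As preliminary observations, $(x_k)$ is a block sequence of the Schauder basis $(d_\gamma)$ and hence basic with constant at most $2$ by Lemma~\ref{lemma2}; choosing each $j_k$ even, Lemma~\ref{normnode}(a) forces $\|x_k\| \geq 1/2$ while Lemma~\ref{p56}(b) gives $\|x_k\| \leq 10C$, so $|a_k| \leq 8\,\bigl\|\sum_k a_k x_k\bigr\|$ for any scalars $(a_k)$. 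I would also arrange that $(j_k)$ grows so fast that $\sum_k 1/m_{j_k}$ is arbitrarily small, say at most $(500C)^{-1}$; this is possible since $m_k$ grows like $2^{2^k}$.

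The core estimate runs as follows. Fix signs $(\delta_k) \subset \{\pm 1\}$ and scalars $(a_k)$, and pick $\gamma \in \Gamma$ nearly norming $\sum_k \delta_k a_k x_k$. With the splitting $x_k = y_k + x'_k$, $y_k = Q_\gamma x_k$, determined by the tree-analysis of $\gamma$, Lemma~\ref{action_on_ys}(i) applied at the root $t = \emptyset$ (where $e^{*}_{\eta_\emptyset}P_{I_\emptyset} = e^{*}_\gamma$) gives $|e^{*}_\gamma(\sum_k \delta_k a_k y_k)| \leq 10C \sum_k |a_k|/m_{j_k}$. For the $x'$-part, invoke Lemma~\ref{killing_signs} on $\gamma$ and $(\delta_k)$ to obtain $\tilde\gamma \in \Gamma$ with $Q_{\tilde\gamma} = Q_\gamma$ and a sign $\epsilon \in \{\pm 1\}$ satisfying $|e^{*}_\gamma(x'_k) - \epsilon e^{*}_{\tilde\gamma}(\delta_k x'_k)| \leq 6C/m_{j_k}$ for every $k$. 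Summing these inequalities against $\delta_k a_k$ and using $\delta_k^2 = 1$ yields
\[
\bigl| e^{*}_\gamma\bigl(\sum_k \delta_k a_k x'_k\bigr) - \epsilon\, e^{*}_{\tilde\gamma}\bigl(\sum_k a_k x'_k\bigr) \bigr| \leq 6C \sum_k |a_k|/m_{j_k}.
\]
Crucially, since $Q_{\tilde\gamma} = Q_\gamma$ the vectors $y_k$ coincide with the analogous vectors $Q_{\tilde\gamma} x_k$, so Lemma~\ref{action_on_ys}(i) applied to $\tilde\gamma$ gives $|e^{*}_{\tilde\gamma}(y_k)| \leq 10C/m_{j_k}$ as well. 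Writing $\sum a_k x'_k = \sum a_k x_k - \sum a_k y_k$ and combining the three estimates yields
\[
\bigl\|\sum_k \delta_k a_k x_k\bigr\| \leq \bigl\|\sum_k a_k x_k\bigr\| + 26C \sum_k |a_k|/m_{j_k}.
\]
Substituting $|a_k| \leq 8\|\sum a_k x_k\|$ and $\sum 1/m_{j_k} \leq (500C)^{-1}$ reduces the error term to a fixed fraction of $\|\sum a_k x_k\|$, delivering a uniform unconditional constant.

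The main obstacle is the coupling between the $y$-part, which depends on the specific norming node $\gamma$, and the sign-flip on the $x'$-part, which would naturally require changing that norming functional. Lemma~\ref{killing_signs} is engineered precisely to cut this knot: it produces $\tilde\gamma$ preserving $Q_\gamma$ exactly so that the same vectors $y_k$ reappear in both applications of Lemma~\ref{action_on_ys}(i); once this observation is in hand, the remaining work reduces to the book-keeping above, together with the standard remark that basic sequences in block subspaces suffice to witness saturation in the whole space.
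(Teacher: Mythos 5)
Your proposal is correct and follows essentially the same route as the paper: both arguments rest on the decomposition $x_k = y_k + x'_k$ via $Q_\gamma$, Lemma~\ref{action_on_ys}(i) to control the $y$-part, Lemma~\ref{killing_signs} to produce $\tilde\gamma$ with $Q_{\tilde\gamma}=Q_\gamma$, and the bound $|a_k|\lesssim \|\sum_k a_k x_k\|$ from the lower norm estimate on the $x_k$. The only (immaterial) difference is directional — you norm the signed sum and deduce an upper bound for it in terms of the unsigned one, while the paper norms the unsigned sum and transfers the lower bound to the signed one via $e^{*}_{\tilde\gamma}$.
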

\begin{proof} 
In every block subspace of $\mathfrak{X}_{Kus}$ pick a sequence $(x_k)_k$ as above with $m_{j_1}>400C$. We claim that such a sequence is unconditional. To this end consider a finite sequence of scalars $(a_k)$ with $\|\sum_ka_kx_k\|=1$ and $(\delta_k)\subset \{\pm 1\}$. We want to estimate the norm of the vector $\sum_k\delta_ka_kx_k$. Take $\gamma\in\Gamma$ with $e^{*}_{\gamma}(\sum_ka_kx_k)\geq \sfrac{3}{4}$. Define $Q_\gamma$, $(y_k)$ and $(x^\prime_k)$ and consider $\tilde{\gamma}$ and $\epsilon$ provided by Lemma \ref{killing_signs}. Notice that as $Q_{\tilde{\gamma}}=Q_\gamma$, the projection $Q_{\tilde{\gamma}}$ defines also $(y_k)$ and $(x^\prime_k)$. Estimate, applying Lemma \ref{killing_signs} and Lemma \ref{action_on_ys} (1) both for $\gamma$ and $\tilde{\gamma}$, as follows
\begin{align*}
 |e^{*}_\gamma(\sum_ka_kx_k)&-\epsilon e^{*}_{\tilde{\gamma}}(\sum_k\delta_ka_kx_k)| 
 \\
 &\leq |e^{*}_\gamma(\sum_ka_kx^\prime_k)-\epsilon e^{*}_{\tilde{\gamma}}(\sum_k\delta_k a_kx^\prime_k)|+|e^{*}_\gamma(\sum_k a_ky_k)|+|e^{*}_{\tilde{\gamma}}(\sum_k\delta_ka_ky_k)|
 \\
 &\leq \sum_k|a_k||e^{*}_{\gamma}(x^\prime_k)-\epsilon e^{*}_{\tilde{\gamma}}(\delta_k x^\prime_k)|+\sum_k|a_k||e^{*}_\gamma(y_k)|+\sum_k|a_k||e^{*}_{\tilde{\gamma}}(\delta_ky_k)|
 \\
 &\leq 4\cdot 24C\sum_k m_{j_k}^{-1}\leq 200Cm_{j_1}^{-1}\leq \sfrac{1}{2}
 \end{align*}
 where in the last line we use the fact that each $|a_k|$ is dominated by twice the basic constant of the basis $(d_\gamma)$. Therefore $\|\sum_k\delta_ka_kx_k\|\geq |e^{*}_{\tilde{\gamma}}(\sum_k\delta_ka_kx_k)|\geq \sfrac{1}{4}$, which ends the proof. 
\end{proof}


\begin{thebibliography}{aaa}
\bibitem{AFHO} S.A. Argyos, D. Freeman, R. Haydon, E. Odell, Th. Raikoftsalis, Th. Schlumprecht, D. Zisimopoulou, \textit{Embedding uniformly convex spaces into spaces with very few operators}, J. Funct. Anal. 262 (2012), no. 3, 
825--849.
\bibitem{AGM} S.A. Argyros, I. Gasparis, P. Motakis, \textit{On the structure of separable $\mathscr{L}_{\infty}$-spaces}, Mathematika 62 (2016), no. 3, 685--700. 
\bibitem{AH} S.A. Argyros, R. Haydon, \textit{A hereditarily indecomposable $\mathscr{L}_{\infty}$-space that solves the scalar-plus-compact problem}, Acta Math. 206 (2011), no. 1, 1--54.
\bibitem{AM} S.A. Argyros, A. Manoussakis, \textit{An indecomposable and unconditionally saturated Banach space}, Dedicated to Professor Aleksander Pe{\l}czy\'nski on the occasion of his 70th birthday, Studia Math. 159 (2003), 
no. 1, 1--32.
\bibitem{AMo} S.A. Argyros, P. Motakis, \textit{The scalar-plus-compact property in spaces without reflexive subspaces}, http://arxiv.org/abs/1608.01962.
\bibitem{AT} S.A. Argyros, S. Todorcevic \textit{Ramsey methods in analysis. Advanced Courses in Mathematics}, CRM Barcelona. Birkhäuser Verlag, Basel, 2005. viii+257 pp.
\bibitem{B} J. Bourgain, \textit{New classes of $\mathscr{L}_{p}$-spaces. Lecture Notes in Mathematics}, 889. Springer-Verlag, Berlin-New York, 1981, v+143 pp.
\bibitem{BD} J. Bourgain, F. Delbaen, \textit{A class of special $\mathscr{L}_{\infty}$-spaces}, Acta Math. 145 (1980), no. 3-4, 155--176.
\bibitem{F} V. Ferenczi, \textit{Operators on subspaces of hereditarily indecomposable Banach spaces}, Bull. London Math. Soc. 29 (1997), 338--344.
\bibitem{FOS} D. Freeman, E. Odell and Th. Schlumprecht, \textit{The universality of $\ell_1$ as a dual space}, Math. Annalen 351 (2011), no. 1, 
149--186.
\bibitem{GPZ} I. Gasparis, M.K. Papadiamantis, D.Z. Zisimopoulou, \textit{More $\ell_{r}$ saturated $\mathscr{L}_{\infty}$ spaces}, Serdica Math. J. 36 (2010), no. 2, 149--170.
\bibitem{G} W.T. Gowers, \textit{A solution to Banach's hyperplane problem}, Bull. London Math. Soc. 26 (1994), no. 6, 523--530.
\bibitem{GM1} W.T. Gowers, B. Maurey, \textit{The unconditional basic sequence problem}, J. Amer. Math. Soc. 6 (1993), no. 4, 851--874.
\bibitem{GM2} W.T. Gowers, B. Maurey, \textit{Banach spaces with small spaces of operators}, Math. Ann. 307 (1997), no. 4, 543--568.
\bibitem{H} R. Haydon, \textit{Subspaces of the Bourgain-Delbaen space}, Studia Math. 139 (2000), no. 3, 275--293.
\bibitem{MP} A. Manoussakis, A. Pelczar-Barwacz, \textit{Operators in tight by support Banach spaces}, J. London Math. Soc. (2) 93 (2016) 464--480.
\bibitem{T} M. Tarbard, \textit{Operators on Banach spaces of Bourgain-Delbaen type}, Thesis (D.Phil.)–University of Oxford (United Kingdom) 2013.
 \end{thebibliography}
\end{document}